\definecolor{tuklblue}{RGB}{0,95,140}
\setlist[enumerate,1]{label={\upshape{\roman*)}},itemsep=.25\baselineskip,topsep=.25\baselineskip}
\setlist[enumerate,2]{label={\upshape{\alph*)}},itemsep=.25\baselineskip,topsep=.25\baselineskip}
\newcommand{\dx}{\mathrm{d}}
\newcommand{\tT}{\mathrm{T}}
\newcommandx{\abs}[2][1=\@empty]{#1\lvert #2 #1\rvert}
\newcommandx{\norm}[3][1=\@empty,3=\@empty]{#1\lVert #2 #1\rVert_{#3}}
\renewcommand{\vec}[1]{\mathbf{#1}}
\newcommand{\grid}{\mathcal{G}}
\newcommand{\HH}{\mathcal{H}}
\newcommand{\NN}{\mathbb{N}}
\DeclareMathOperator*{\argmin}{arg\,min} 
\DeclareMathOperator*{\esssup}{ess\,sup} 
\DeclareMathOperator{\diag}{diag}
\DeclareMathOperator{\trace}{trace}
\DeclareMathOperator{\diffeo}{\mathscr{A}}
\DeclareMathOperator{\SPD}{\mathcal{P}}
\newtheorem{theorem}{Theorem}[section]
\newtheorem{lemma}[theorem]{Lemma}
\newtheorem{remark}[theorem]{Remark}
\newtheorem{corollary}[theorem]{Corollary}
\newcommand{\externalFolder}{imgpdf/}
\newcommand{\externalOnly}[1]{
	\ifthenelse{\boolean{useExternalization}}{#1}{}%
}
	\tikzstyle{point}=[inner sep=3ptpt, outer sep=0pt,fill=black]%
	\newenvironment{customlegend}[1][]{%
		\begingroup
		\csname pgfplots@init@cleared@structures\endcsname
		\pgfplotsset{#1}%
	}{%
	\csname pgfplots@createlegend\endcsname
	\endgroup
}%
\def\addlegendimage{\csname pgfplots@addlegendimage\endcsname}
\pgfplotsset{
	/pgfplots/uxbox/.style ={
		legend image code/.code={
			\draw[thin] (-0.08cm,-0.08cm) rectangle ++(0.16cm,0.16cm); 	
			\draw[fill] (0.4,-0.08cm) rectangle ++(0.16cm,0.16cm); 		
			\draw[->] (0,0)-- ++(0.3cm,0);
			\draw[->] (0.48,0)-- ++(0.3cm,0);
		}
	}
}
\pgfplotsset{
	/pgfplots/uybox/.style ={
		legend image code/.code={
			\draw[thin] (-0.08cm,-0.08cm) rectangle ++(0.16cm,0.16cm);
			\draw[->] (0,0)-- ++(0,0.3cm);			
			\draw[fill] (0.4,-0.08cm) rectangle ++(0.16cm,0.16cm); 			
			\draw[->] (0.48,0)-- ++(0,0.3cm);
		}
	}
}
\DeclareMathOperator{\RR}{\mathbb{R}}
\begin{document}

\title{Morphing of Manifold-Valued Images\\
	inspired by Discrete Geodesics in Image Spaces 
}

\author{
	Sebastian Neumayer\footnotemark[1] \and Johannes Persch\footnotemark[1] \and Gabriele Steidl\footnotemark[1] \footnotemark[2]}

\maketitle
\footnotetext[1]{Department of Mathematics,
	Technische Universität Kaiserslautern,
	Paul-Ehrlich-Str.~31, D-67663 Kaiserslautern, Germany,
	\{sneumaye,persch,steidl\}@mathematik.uni-kl.de.} 
\footnotetext[2]{Fraunhofer ITWM, Fraunhofer-Platz 1,
	D-67663 Kaiserslautern, Germany}
\begin{abstract}
	This paper  addresses the morphing of manifold-valued images 
	based on the  time discrete geodesic paths model
	of Berkels, Effland and Rumpf~\cite{BER15}.
	Although for our manifold-valued setting such an interpretation of the 
	energy functional is not available so far,
	the model is interesting on its own.
	We prove the existence of  a minimizing sequence within the set of $L^2(\Omega,\HH)$
	images having values in a finite dimensional Hadamard manifold $\HH$ 
	together with a minimizing sequence of admissible diffeomorphisms.
	To this end, we show that the continuous manifold-valued functions are dense in $L^2(\Omega,\HH)$.
	We propose a space discrete model based on a finite difference approach on staggered grids,
	where we focus on the linearized elastic potential in the regularizing term.
	The numerical minimization alternates between 
	i) the computation of a deformation sequence between given images
	via the parallel solution of certain registration problems
	for manifold-valued images, and ii) the computation of an image sequence 
	with fixed first (template) and last (reference) frame
	based on a given sequence of deformations via the solution 
	of a system of equations arising from the corresponding Euler-Lagrange equation.
	Numerical examples give a proof of the concept of our ideas.
\end{abstract}

\section{Introduction}

Smooth image transition, also known as image morphing, is a frequently addressed
task in image processing and computer vision, and there are various approaches to tackle the problem. 
For example, in feature based morphing only specific features are mapped to each other 
and the whole deformation is then calculated by interpolation.
This was successfully applied, e.g., in the production of the movie \emph{Willow}~\cite{Smythe1990}. 
We refer to  \cite{wolberg1990,Wolberg1998} for an overview of similar techniques.
This paper is related to
a special kind of image morphing, the so-called metamorphosis introduced by 
Miller, Trouv\'e and Younes \cite{MY2001,TY2005a,TY2005b}.
The metamorphosis model can be considered 
as an extension of the flow of diffeomorphism model and its large deformation diffeomorphic metric mapping framework \cite{BMTY2005,CRM96, DGM98,Tro95,Tro98}
in which \emph{each} image pixel is transported along a trajectory determined by a diffeomorphism path. 
As an extension the metamorphosis model allows the variation of image intensities along trajectories of the pixels.
Solutions via shooting methods were developed e.g. in \cite{HJS12,RY16}, where the first reference considers a  metamorphosis regression model.
A comprehensive overview over the topic is given in the book \cite{Younes2010} as well as in the review article \cite{MTY15}.
For a historic account see also \cite{MTY02}.

This paper builds up on a time discrete geodesic paths model
by Berkels, Effland and Rumpf \cite{BER15}.
We mention that such a variational time discretization in shape spaces was already used in \cite{RW13,RW15}, see also \cite{FJSY09}.
Let $\Omega \subset \mathbb R^n$, $n \ge 2,$ be an open, bounded, connected domain with Lipschitz boundary.
The authors of \cite{BER15} define a (time) \emph{discrete geodesic} 
connecting a template image $I_0 \coloneqq T \in L^2(\Omega,\mathbb R)$ and 
a reference image $I_K\coloneqq R\in L^2(\Omega,\mathbb R)$, $K \ge 2,$
as minimizing sequence $\vec I = (I_1,\ldots,I_{K-1}) \in \left( L^2(\Omega,\mathbb R) \right)^{K-1}$ 
of the \emph{discrete path energy}
\begin{align} \label{mdr}
{\boldsymbol{\mathcal J}}_{\mathrm{\small{BER}}}(\vec I)
\coloneqq  
&\sum_{k=1}^K  \inf_{\varphi_k\in \diffeo} 
\int_\Omega W(D\varphi_k(x))+\gamma\lvert D^m\varphi_k(x)\rvert^2\dx x
+ \frac{1}{\delta} \int_{\Omega} (I_{k} \circ \varphi_{k} - I_{k-1})^2 \dx x, \\
&\mbox{subject to} \quad I_0 = T, \; I_K = R,	\nonumber
\end{align}
where $\delta, \gamma > 0$,  the function $W$ has to satisfy certain properties,
$\diffeo$ is an admissible set of deformations, and
the higher order derivatives $D^m \varphi_k$, $m> 1 + \frac{n}{2}$, guarantee a certain smoothness of the deformation.
Berkels, Effland, and Rumpf showed that under certain constrains on $W$ 
minimizers of ${\boldsymbol{\mathcal J}}_{\mathrm{\small{BER}}}$ converge for $K \rightarrow \infty$ to a minimizer
of the continuous geodesic path model of Trouv\'e and Younes~\cite{TY2005a,TY2005b}, where the deformation is regularized by the dissipation density of a Newtonian fluid, which is the linearized elastic potential applied to the time derivative of the diffeomorphism path.

In this paper, we generalize the model \eqref{mdr} to manifold-valued images in $L^2(\Omega,\HH)$
and prove that it is well-defined at least for finite dimensional Hadamard manifolds $\HH$.
These are simply connected, complete Riemannian manifolds with non-positive sectional curvature.
Typical examples of such Hadamard manifolds are hyperbolic spaces and 
symmetric positive definite matrices with the affine invariant metric.
As an important fact we will use that the distance in Hadamard spaces is jointly convex
which implies weak lower semicontinuity of certain functionals involving the distance function.
Since we use another admissible set than in \cite{BER15} all proofs are new also for real-valued images.
So far we have not established a relation of our model to some kind of time continuous path energy model
in the image space $L^2(\Omega,\HH)$.

Dealing with digital images we have to introduce a space discrete model. 
In contrast to the finite element approach in \cite{BER15},
we prefer a finite difference model on a staggered grid.
We have used this discretization for gray-value images in \cite{PPS17}.
For finding a minimizer, we propose an alternating algorithm
fixing either the deformation or the image sequence:
\begin{itemize}
 \item[i)]
For a fixed image sequence, we have to solve certain registration problems for \emph{manifold-valued images} 
in parallel to get a sequence $(\varphi_1,\ldots,\varphi_K)$ of deformations. 
Necessary interpolations were performed via Karcher mean computations.
There exists a rich literature on registration problems for images with values in the Euclidean space, 
see e.g.~\cite{CJ2001,FM2003,HM06,HBDHRSSU2007} 
and, for an overview,  the books of Modersitzki \cite{Mod2004,Mod2009}.
 \item[ii)] For a fixed deformation sequence,
we need to find a minimizing image sequence $(I_1,\ldots,I_{K-1})$ of
\[
\sum_{k = 1}^{K}  \mathrm{d}_2^2(I_{k}\circ \varphi_{k},I_{k-1}) \quad \mbox{subject to} \quad I_0 = T, I_K = R
\]
where $\mathrm{d}_2$ denotes the distance in $L^2(\Omega,\HH)$.
In our manifold-valued setting, this requires to evaluate geodesics between manifold-valued image pixels 
at several well-defined time steps.
\end{itemize}

\paragraph{Outline of the Paper:}
We start with preliminaries on Hadamard spaces in Subsection \ref{sec:non_leb}, where
the focus is on the proof that the uniformly continuous functions 
mapping into locally compact Hadamard spaces $\HH$ are dense in $L^p(\Omega,\HH)$, $p \in [1,\infty)$. 
We have not found this result in the literature.
We introduce the necessary notation in Sobolev and Hölder spaces in Subsection \ref{sec:admiss}.
The important definition is those of an admissible set of deformations which differs from that in
\cite{BER15}. In particular, our definition guarantees
that the  concatenation of an image $I \in L^2(\Omega,\HH)$ with a deformation from our admissible set $I \circ \varphi$ 
is again an image in
$L^2(\Omega,\HH)$.
In Section \ref{sec:dm}, we introduce our space continuous model for manifold-valued images
and prove the existence of minimizers.
Although we could roughly follow the lines in \cite{BER15}, all proofs are new 
also for the Euclidean setting $\HH \coloneqq \mathbb R^d$ due to the different admissible set.
Moreover, the manifold-valued image setting requires 
a nontrivial update of the nice regridding idea from \cite{BER15,RW13,RW15} in Theorem \ref{lem:uni:seq}.
This influences also the proof of the main result in Theorem \ref{main}.
In Section \ref{sec:d_model}, we detail the computation issues for the space discrete model,
where we propose a finite difference scheme on a staggered grid together with a multiscale technique.
Numerical examples are given in Section \ref{numerics}. 
Note that the numerical algorithms are not restricted to Hadamard manifolds.
We finish with conclusions in Section \ref{sec:conclusions}.

\section{Preliminaries}\label{sec:prelim}
	
Throughout this paper, let $\Omega \subset\RR^n$, $n \ge 2,$ be an open, bounded, connected domain with Lipschitz boundary. 
	
\subsection{Hadamard Spaces}\label{sec:non_leb}

For an overview on Hadamard spaces we refer to the books \cite{Bac14,BH1999,Jost97}.
First, recall that a metric space $(X,d)$ is \emph{geodesic} if every two points $x,y \in X$  
are connected by a curve $\gamma_{\overset{\frown}{x,y}}\colon [0,1] \to X$, called geodesic, such that
 \begin{equation}\label{eq:geo}
  d \bigl(\gamma_{\overset{\frown}{x,y}}(s),\gamma_{\overset{\frown}{x,y}}(t) \bigr) 
  = \lvert s - t\rvert d \bigl(\gamma_{\overset{\frown}{x,y}}(0),\gamma_{\overset{\frown}{x,y}}(1) \bigr), 
	\qquad \text{for every } s,t \in [0,1],
 \end{equation}
 and $\gamma_{\overset{\frown}{x,y}}(0)=x$ and $\gamma_{\overset{\frown}{x,y}}(1)=y$.
A complete metric space $(\HH,d)$ is called a \emph{Hadamard space} if it is geodesic 
and if for every geodesic triangle $\triangle p,q,r \in  \HH$ 
and 
$x \in \gamma_{\overset{\frown}{p,r}}$,
$y \in \gamma_{\overset{\frown}{q,r}}$
we have $d(x,y) \le |\bar x - \bar y|$, where $\bar x, \bar y$ are corresponding points
in the comparison triangle $\triangle \bar p, \bar q,\bar r \in \mathbb R^2$ 
having the same side lengths as the geodesic one.
By~\cite[Theorem 1.1.3]{Bac14} this is equivalent to $(\HH,d)$ being 
a complete metric geodesic space with
 \begin{equation}\label{eq:reshet0}
 d^2(x,v) + d^2(y,w) \le d^2(x,w) + d^2(y,v) + 2d(x,y)d(v,w),
 \end{equation}
 for every $x,y,v,w \in \HH$. 
 Inequality~\eqref{eq:reshet0} implies that geodesics are uniquely determined by their endpoints.
 Later we will restrict our attention to finite dimensional Hadamard manifolds which are Hadamard spaces
 having additionally a Riemannian manifold structure. 
  
A function $f\colon \HH \rightarrow \mathbb R$ is called \emph{convex} if for every $x,y \in \HH$ the function 
$f \circ \gamma_{\overset{\frown}{x,y}}$ is
convex, i.e., if
\begin{equation*}
	f\bigl( \gamma_{\overset{\frown}{x,y}}(t) \bigr)
	\le (1-t) f\bigl( \gamma_{\overset{\frown}{x,y}} (0) \bigr)
	+ t f \bigl(\gamma_{\overset{\frown}{x,y}}(1)\bigr),
\end{equation*}
for each $t \in [0,1]$. 
An important property of Hadamard spaces, which is also fulfilled in more general Busemann spaces, 
is that the distance is jointly convex, see~\cite[Proposition 1.1.5]{Bac14}, i.e., 
for two geodesics $\gamma_{\overset{\frown}{x_1,x_2}},\gamma_{\overset{\frown}{y_1,y_2}}$ and $t\in[0,1]$ it holds
\begin{equation}
\label{eq:jointconvexity}
d\bigl(\gamma_{\overset{\frown}{x_1,x_2}}(t),\gamma_{\overset{\frown}{y_1,y_2}}(t)\bigr) \le (1-t)d(x_1,y_1)+td(x_2,y_2).
\end{equation}
For a bounded sequence $\{x_n\}_{n\in\NN}$ of points $x_n \in \HH$, the function 
$w \colon \HH \to [0, +\infty)$ defined by
\begin{equation*}
  w(x;\, \{x_n\}_{n\in\NN}) \coloneqq \limsup_{n\to\infty} d^2(x, x_n)
\end{equation*}
has a unique minimizer, which is called the \emph{asymptotic center}
of $\{x_n\}_{n\in\NN}$, see \cite[p.~58]{Bac14}.
A sequence $\{x_n\}_{n\in\NN}$ \emph{weakly converges} to a point $x \in \HH$ if it is bounded 
and $x$ is the asymptotic center of each subsequence of $\{x_n\}_{n\in\NN}$, see  \cite[p.~103]{Bac14}. 
The definition of proper and (weakly) lower semicontinuous (lsc) functions carries over from the Hilbert space setting.
 	
	On $\HH$ we consider the Borel $\sigma$-algebra $\mathcal{B}$.
	A function $f\colon\Omega\to\HH$ is (Lebesgue) measurable
	if $\{\omega \in \Omega: f(\omega) \in B \}$ is a (Lebesgue) measurable set for all $B \in \mathcal{B}$.
	In the following, we only consider the Lebesgue measure $\mu$ on $\mathbb R^n$.
	A  measurable map $f\colon\Omega\to\HH$ 
	belongs to $\mathcal{L}^p(\Omega,\HH)$, $p \in [1,\infty]$, if 
	\begin{equation*}
		\mathrm{d}_p(f(\omega),a) < \infty,
	\end{equation*}
	for any constant mapping $\omega \mapsto a$ to a fixed $a\in \HH$, where
	$\mathrm{d}_p$ is defined for two measurable maps $f$ and $g$ by
	\begin{equation*}
	\mathrm{d}_p(f(\omega),g(\omega)) \coloneqq 
	\begin{cases}
		\Bigr(\int_{\Omega}d^p(f(\omega),g(\omega)) \, \dx \omega \Bigr)^{\frac{1}{p}} \quad & p\in[1,\infty),\\
		\esssup_{\omega\in\Omega}d(f(\omega),g(\omega)) & p  =\infty.
	\end{cases}
	\end{equation*}
	 With the equivalence relation
	$	f\sim g$ if $\mathrm{d}_p(f,g) = 0$,
	the quotient space $L^p(\Omega,\HH)\coloneqq \mathcal{L}^p(\Omega,\HH)/ \sim$ equipped with $\mathrm{d}_p$ becomes a complete metric space.
	For $p=2$ it is again a Hadamard space, see \cite[Proposition 1.2.18]{Bac14}.

	By $C(\Omega,\HH)$ we denote the space of continuous maps from $\Omega$ to $\HH$.
	Next we show that $C(\Omega,\HH)$ is dense in $L^p(\Omega,\HH)$, 
	more precisely, also the uniformly continuous functions are dense.
	We start by defining simple functions (step functions). A function $g \in L^p(\Omega,\HH)$
	is called a \emph{simple function} if there exists a finite partition of 
	$\Omega =  \dot\bigcup_{i \in \mathcal{I}} A_i$ into disjoint measurable sets $A_i$ such that
	$g|_{A_i} = a_i$ for all $i \in \mathcal{I}$.
	There exists a Hopf-Rinow-like theorem for Hadamard spaces which says that in locally compact Hadamard spaces closed and bounded sets are compact, 
	see 	\cite[p. 35]{BH1999}.

	\begin{lemma}\label{lem:dense_help_1}
	Let $(\HH,d)$ be a locally compact Hadamard space.
	Then the simple functions are dense in $L^p(\Omega,\HH)$, $p \in [1,\infty)$.
	\end{lemma}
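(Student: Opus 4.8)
The plan is to prove density of simple functions in $L^p(\Omega,\HH)$ by first approximating an arbitrary $f \in L^p(\Omega,\HH)$ by a function with \emph{essentially bounded} range, and then approximating such a bounded function by simple functions using a measurable partition of its range. This mirrors the classical two-step argument in the linear theory, but care is needed because $\HH$ carries no vector structure, so all averaging and truncation must be replaced by metric constructions.

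\textbf{Step 1 (reduction to bounded range).} Fix a reference point $a \in \HH$ and, for $f \in L^p(\Omega,\HH)$, set $E_M \coloneqq \{\omega \in \Omega : d(f(\omega),a) \le M\}$. Define $f_M$ by replacing $f$ on $\Omega \setminus E_M$ with the constant value $a$; concretely $f_M(\omega) = f(\omega)$ on $E_M$ and $f_M(\omega) = a$ otherwise. Since $d_p(f,a) < \infty$ by definition of $\mathcal L^p(\Omega,\HH)$, the integrand $d^p(f(\omega),a)$ is integrable, so by dominated convergence $\int_{\Omega \setminus E_M} d^p(f(\omega),a)\,\dx\omega \to 0$ as $M \to \infty$. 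As $d_p^p(f,f_M) = \int_{\Omega\setminus E_M} d^p(f(\omega),a)\,\dx\omega$, this shows $d_p(f,f_M)\to 0$, so it suffices to approximate a function whose range lies in the closed ball $\bar B(a,M)$.

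\textbf{Step 2 (partitioning the compact range).} Here I invoke the Hopf--Rinow-type theorem quoted just before the statement: since $\HH$ is locally compact, the closed bounded ball $\bar B(a,M)$ is compact. Cover it by finitely many open balls of radius $\varepsilon$, say with centers $b_1,\dots,b_N$, and disjointify the cover into Borel sets $B_1,\dots,B_N \subset \bar B(a,M)$ of diameter at most $2\varepsilon$ with $b_i \in B_i$. Pulling back through the measurable map $f_M$, the sets $A_i \coloneqq f_M^{-1}(B_i)$ form a finite measurable partition of $\Omega$ (measurability of $A_i$ follows because $f_M$ is measurable and $B_i$ is Borel). Define the simple function $g$ by $g|_{A_i} = b_i$. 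Then for $\omega \in A_i$ we have $d(f_M(\omega),g(\omega)) = d(f_M(\omega),b_i) < \varepsilon$ since both points lie in $B_i$, which has diameter at most $2\varepsilon$; more carefully $f_M(\omega)\in B_i$ and $b_i\in B_i$ give $d(f_M(\omega),b_i)\le 2\varepsilon$. Hence $d_p^p(f_M,g) = \sum_{i} \int_{A_i} d^p(f_M(\omega),b_i)\,\dx\omega \le (2\varepsilon)^p \mu(\Omega)$, which tends to $0$ with $\varepsilon$ because $\mu(\Omega) < \infty$ by boundedness of $\Omega$.

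Combining the two steps via the triangle inequality for $d_p$ (valid since $(L^p(\Omega,\HH),d_p)$ is a metric space) yields, for any prescribed tolerance, a simple function within that tolerance of $f$. The main obstacle I anticipate is not conceptual but the verification of measurability of the pieces $A_i$ and confirming that $g$ genuinely lies in $L^p(\Omega,\HH)$ — the latter is immediate since $g$ takes finitely many values in a bounded set over a finite-measure domain. The one genuinely metric-geometric input is the compactness of closed balls, which is exactly why local compactness is hypothesized; without it the finite $\varepsilon$-net in Step 2 would be unavailable and one could only partition into countably many pieces, breaking the definition of a simple function.
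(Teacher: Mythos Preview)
Your proof is correct and follows essentially the same strategy as the paper's: both truncate outside a large ball using integrability of $d^p(f(\cdot),a)$, invoke the Hopf--Rinow-type compactness of closed bounded sets in a locally compact Hadamard space to obtain a finite $\varepsilon$-net of the range, disjointify, and define the simple function by assigning a representative to each preimage piece. The only cosmetic difference is that the paper merges your two steps into one by setting $g|_{A_0}=a$ directly on the unbounded-range part rather than first passing through an intermediate truncation $f_M$.
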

	
	\begin{proof}
	Let $f \in L^p(\Omega,\HH)$. 
	Then we have for a fixed reference point $a \in \HH$ that
	$$
	I_N \coloneqq \int\limits_{\{\omega \in \Omega: d( f(\omega),a) > N \} } d^p( f(\omega),a) \, \dx \omega \rightarrow 0
	$$
	as $N \rightarrow \infty$.
	For an arbitrary $\varepsilon > 0$, we choose $N = N(\varepsilon)$ such that $I_N < \frac{\varepsilon}{2}$ and set
	$$
	\mathcal{A}_0  \coloneqq  \{x \in \HH : d(x,a) > N\} 
	\quad {\textrm and} \quad
	A_0 \coloneqq \{\omega \in \Omega: f(\omega) \in  \mathcal{A}_0\}.
	$$
	Next we cover the compact set
	$$
	\mathcal{A} \coloneqq \{x \in \HH : d(x,a) \le N\}.
	$$
	with open balls of radius $r_\varepsilon^p\coloneqq\frac{1}{2^p}\frac{\varepsilon}{2 \mu(\Omega)}$.
	Since $\mathcal{A}$ is compact, this covering contains a finite subcovering
	which can be restricted to $\mathcal{A}$ and made disjoint so that
	$\mathcal{A} = \dot \cup_{i =1}^M \mathcal{A}_i$ for some $M \in \mathbb N$.
	Fixing any $a_i \in \mathcal{A}_i$, 
	we have 
	$d(x, a_i) \le 2r_\varepsilon$ 
	for all $x \in \mathcal{A}_i$.
	Since the $\mathcal{A}_i \in \mathcal{B}$ and $f$ is measurable, 
	the sets 
	$A_i \coloneqq \{\omega \in \Omega: f(\omega) \in  \mathcal{A}_i\}$
	are measurable.
	Thus, $\Omega = \dot \cup_{i =0}^M A_i$ is a finite disjoint partition of $\Omega$ into measurable sets.
	Defining the simple function $g\colon \Omega \rightarrow \HH$ by
	$g|_{A_i} \coloneqq a_i$, $i=0,\ldots,M$, where $a_0 \coloneqq a$,
	and conclude
	\begin{align*}
	\int_{\Omega} d^p \left(f(\omega),g(\omega) \right) \, \dx \omega 
	&= 
	\int_{A_0} d^p \left(f(\omega),a_0 \right) \, \dx \omega + \sum_{i=1}^M \int_{A_i} d^p \left(f(\omega),a_i \right) \, \dx \omega \\
	&\le
	\frac{\varepsilon}{2} + \sum_{i =1}^M \mu(A_i) \frac{\varepsilon}{2 \mu(\Omega)} \le \varepsilon.
	\end{align*}
	\end{proof}
	\begin{figure}[t]\label{fig:Helper}
		\begin{subfigure}[b]{0.49\textwidth}
			\centering
			\begin{TikZOrPDF}{sets}
			\begin{tikzpicture}[scale=0.37]
			\draw[draw=none,fill,white!75!black] (-10,4) -- (0,4) -- (0,-4) -- (-10,-4) -- (-10,-3) -- (-1,-3) -- (-1,3) -- (-10,3);	
			\draw[draw=none,fill,white!75!black] (7,4) -- (0,4) -- (0,-4) -- (7,-4) -- (7,-3) -- (1,-3) -- (1,3) -- (7,3);
			
			\draw[draw=black, dashed,fill = white!82!black]  (7,-3) -- (1,-3) -- (1,3) -- (7,3);
			\draw[draw=black, dashed,fill = white!82!black] (-10,3) -- (-1,3) -- (-1,-3) -- (-10,-3);
			
			\draw[draw=black,fill = white!90!black]  (7,-2) -- (2,-2) -- (2,2) -- (7,2);
			\draw[draw=black,fill = white!90!black] (-10,2) -- (-2,2) -- (-2,-2) -- (-10,-2);
			\draw[dashed] (-10,4) -- (7,4);
			\draw[dashed] (-10,-4) -- (7,-4);
			\draw[dashed] (0,4) -- (0,-4);

			\draw (-10,2) -- (-2,2) -- (-2,-2) -- (-10,-2);
			\draw (7,-2) -- (2,-2) -- (2,2) -- (7,2);
			
			\draw[fill] (0,0) circle(2pt);\
			\draw[fill] (-5 , 0) circle(2pt);
			
			\draw node[anchor=west] at (-10 , -3.5) {$A_i$};
			\draw node[anchor=west] at (-10 , -2.5) {$B_i$};
			\draw node[anchor=west] at (-5 , 0) {$a_i$};
			\draw node[anchor=east] at (-7 , 0) {$\mathcal{K}_i$};
			
			\draw node[anchor=east] at (7 , -3.5) {$A_j$};
			\draw node[anchor=east] at (7 , -2.5) {$B_j$};
			\draw node[anchor=east] at (5 , 0) {$\mathcal{K}_j$};
			\draw node[anchor=west] at (0 , 0) {$a_0$};
			
			\end{tikzpicture}
			\end{TikZOrPDF}
			\caption{Illustration for the construction of the $B_i$ and $\mathcal K_i$.}
			\label{subfig:Helper1}
		\end{subfigure}
		\begin{subfigure}[b]{0.49\textwidth}
		\centering
		\begin{TikZOrPDF}{setsplot}
		\begin{tikzpicture}[scale=0.75]
		\begin{axis}
		[
		xtick 				= {-0.5,-0.25,0},
		xticklabels			= {$\frac{1}{2}$,$\frac{1}{4}$,$0$},
		ytick 				= {},
		yticklabels			= {},
		xmax                = 0.75, 
		xmin                = -0.75, 
		ymax                = 1.1, 
		ymin                = 0.0, 
		hide y axis,
		axis x line=middle,
		x label style={at={(ticklabel cs:.75)},anchor=near ticklabel,below=-3mm},
		x axis line style={stealth-},
		anchor=origin,
		yscale = 0.55,		
		xlabel = {dist$(w,B_i^c)$}
		]
		\addplot[red,mark=none,smooth,thick,domain = -0.75:0] {0.5};
		\addplot[red,mark=none,smooth,thick,domain = 0:1] {1};
		\addplot[black,mark=none,smooth,dashed,thick,domain = -0.4975:0] {1+x};
		\addplot[black,mark=none,smooth,thick,domain = -0.2525:0] {1+2*x};
		\addplot+[black,mark=none,smooth,thick,domain = -0.75:-0.25] {0.4885};
		\addplot+[black,mark=none,smooth,thick,domain = -0.75:-0.5] {0.5115};
		
		\draw[red] node at (axis cs:-0.005,0.5) {$)$};
		\draw[red] node at (axis cs:0.005,1) {$[$};
		\draw node[anchor=west] at (axis cs:-0.125,0.7) {$h_4$};
		\draw node[anchor=east] at (axis cs:-0.375,0.7) {$h_2$};
		\draw node[anchor=north] at (axis cs:0.5,1) {\color{red}$\tilde{g}$};
		\end{axis}
		\end{tikzpicture}
		\end{TikZOrPDF}
		\caption{Illustration for the construction of the uniformly continuous approximation $h_k$.}
		\label{subfig:Helper2}
		\end{subfigure}		
	\caption{Illustrations to the proof of Theorem~\ref{thm:dense}.}
	\end{figure}
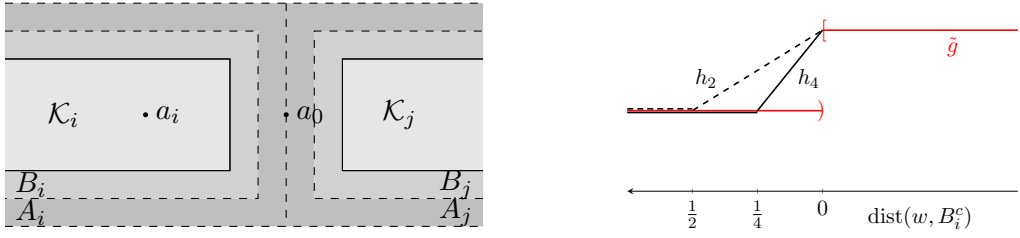
	
	\begin{theorem}\label{thm:dense}
	Let $(\HH,d)$ be a locally compact Hadamard space.
	Then the set of uniformly continuous functions mapping from $\Omega$ to $\HH$ is dense in $L^p(\Omega,\HH)$, $p \in [1,\infty)$.
	\end{theorem}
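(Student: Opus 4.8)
The plan is to build the approximation on top of Lemma~\ref{lem:dense_help_1}. Since $\mathrm d_p$ satisfies the triangle inequality, it suffices to approximate a single simple function $g$, say with $g|_{A_i}=a_i$ and $\Omega=\dot\bigcup_{i=0}^M A_i$ (where $a_0=a$ is the reference point), by uniformly continuous maps. Because the Lebesgue measure is inner regular and each $A_i$ has finite measure, I first choose compact sets $B_i\subseteq A_i$ with $\mu(A_i\setminus B_i)$ arbitrarily small. Being pairwise disjoint and compact, the $B_i$ satisfy $\rho\coloneqq\min_{i\neq j}\operatorname{dist}(B_i,B_j)>0$; this separation is what ultimately lets the construction be glued together continuously.

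For the interpolation I single out $a_0$ as a hub and connect every value $a_i$ to it by the unique geodesic $\gamma_{\overset{\frown}{a_0,a_i}}$. Choosing a width $0<r<\rho/2$, I define the Lipschitz cut-off functions
\[
\phi_i(\omega)\coloneqq\max\!\Bigl(0,\,1-\tfrac1r\operatorname{dist}(\omega,B_i)\Bigr),\qquad i=1,\dots,M,
\]
which are the functions $h_k$ of Figure~\ref{subfig:Helper2} with $k\sim 1/r$, so that $\phi_i\equiv 1$ on $B_i$ and $\phi_i$ is supported in the $r$-neighbourhood of $B_i$. As $r<\rho/2$, these neighbourhoods are pairwise disjoint, so at most one $\phi_i$ is nonzero at any point, and I set
\[
h(\omega)\coloneqq
\begin{cases}
\gamma_{\overset{\frown}{a_0,a_i}}\bigl(\phi_i(\omega)\bigr), & \operatorname{dist}(\omega,B_i)<r,\\
a_0, & \text{otherwise.}
\end{cases}
\]
On $B_i$ this gives $h=\gamma_{\overset{\frown}{a_0,a_i}}(1)=a_i$, and near the boundary of each neighbourhood $\phi_i\to0$, hence $h\to a_0$, so the pieces match.

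Two things then have to be checked. Uniform continuity follows because $\operatorname{dist}(\cdot,B_i)$ is $1$-Lipschitz, $t\mapsto\gamma_{\overset{\frown}{a_0,a_i}}(t)$ is Lipschitz in $t$ by \eqref{eq:geo}, and the disjoint supports make $h$ a well-defined continuous (indeed locally Lipschitz) map given by the same formula on all of $\RR^n$; its restriction to the compact set $\overline\Omega$ is therefore uniformly continuous. For the error, note $h=g$ on $\bigcup_i B_i$, whereas on $\Omega\setminus\bigcup_i B_i$ the value $h(\omega)$ lies on one of the finitely many geodesic segments emanating from $a_0$, so that $d\bigl(h(\omega),g(\omega)\bigr)\le 2D$ with $D\coloneqq\max_i d(a_0,a_i)<\infty$. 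Hence
\[
\mathrm d_p(h,g)^p=\int_{\Omega\setminus\bigcup_i B_i} d^p\bigl(h(\omega),g(\omega)\bigr)\,\dx\omega\le(2D)^p\sum_i\mu(A_i\setminus B_i),
\]
which is below any prescribed $\varepsilon^p$ by the choice of the $B_i$.

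The main obstacle is the interpolation step in the manifold setting: unlike in a vector space there is no canonical way to average several of the values $a_i$ simultaneously, so a naive partition-of-unity convex combination would be ill-defined. Routing every transition through the single hub $a_0$ along unique Hadamard geodesics circumvents this, and the positive separation $\rho$ of the compact cores $B_i$ is exactly what guarantees that the individually continuous transition pieces do not interfere and assemble into one uniformly continuous map. Boundedness of the geodesic segments, which yields the crude bound $2D$, is what keeps the error integrable over the small exceptional set.
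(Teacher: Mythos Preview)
Your proof is correct and follows essentially the same route as the paper's: reduce to simple functions via Lemma~\ref{lem:dense_help_1}, extract pairwise separated compact cores by inner regularity, and interpolate along geodesics through a fixed hub $a_0$ using Lipschitz cut-offs supported on disjoint neighbourhoods of the cores. Your argument is in fact a bit more streamlined than the paper's, which introduces an intermediate simple function $\tilde g$ and passes to a limit via dominated convergence, whereas you give a direct one-step error bound $(2D)^p\sum_i\mu(A_i\setminus B_i)$; extending $h$ to all of $\RR^n$ and restricting to the compact $\overline\Omega$ also lets you bypass the paper's separate treatment of $\partial\Omega$.
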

	
	\begin{proof}
	By Lemma \ref{lem:dense_help_1}, it suffices to show that simple functions can be well approximated
	by uniformly continuous functions.
	
	Let $g\colon \Omega \rightarrow \HH$ be a simple function
	determined by $A_i$ and corresponding $a_i \in \HH$, $i \in \mathcal{I}$ from a finite index set $\mathcal{I} \subset \mathbb N$.
	Set
	$C \coloneqq \max_{i,j \in \mathcal{I}} d^p(a_i,a_j)$.
	Let $\varepsilon > 0$ be arbitrary small.
	Since the Lebesgue measure is regular there exist compact sets ${\mathcal K}_i \subseteq A_i$ such that
	$$\mu(A_i\backslash {\mathcal K}_i) \le \left( \frac{\varepsilon}{2} \right)^p \frac{1}{C \, |\mathcal{I}|}.$$
	Since the ${\mathcal K}_i$ are disjoint and compact, there exists $\delta >0$ such that
	$\mathrm{dist} ({\mathcal K}_i, {\mathcal K}_j) > \delta$ and $\mathrm{dist}(\mathcal{K}_i,\partial \Omega)> \delta$ for all $i,j \in \mathcal{I}$, 
	where $\mathrm{dist}$ is the distance function with respect to the Euclidean
	norm in $\mathbb R^n$.
	Then the open sets 
	$B_i = B_{\frac{\delta}{4}} ({\mathcal K}_i) \coloneqq \{\omega \in  \Omega: \mathrm{dist}(\omega, {\mathcal K}_i) < \frac{\delta}{4} \}$,  
	$i \in \mathcal{I}$, are disjoint.
	Let $B_0 \coloneqq \Omega \backslash \cup_{i \in \mathcal{I}} B_i$ and $a_0 \coloneqq a_1$.
	We define a simple function $\tilde g\colon\Omega \rightarrow \HH$ by
	$\tilde g\vert_{B_i} \coloneqq a_i$, $i \in \{0\} \cup \mathcal{I}$. For an illustration see Fig.~\ref{subfig:Helper1}.
	It is related to the original step function $g$ by
	\begin{equation} \label{ab1}
	\mathrm{d}^p_p(g,\tilde g) = \int_{\Omega \backslash \bigcup_{i \in \mathcal{I}} {\mathcal K}_i} d^p \left(g(\omega) ,\tilde g(\omega) \right) \, \dx \omega
	\le C \, \sum_{i \in \mathcal{I}} \mu(A_i \backslash {\mathcal K}_i)  \le \left( \frac{\varepsilon}{2} \right)^p.
	\end{equation}
	Next we approximate $\tilde g$ by a sequence of uniformly continuous functions.
	Let $\gamma_{\overset{\frown}{a,b}}$ denote the unique geodesic joining $a,b \in \HH$, where
	$\gamma_{\overset{\frown}{a,b}}(0) = a$ and $\gamma_{\overset{\frown}{a,b}}(1) = b$.
	For $k \in \NN$, define $h_k\colon \Omega \rightarrow \HH$ by
	$h_k|_{B_0} \coloneqq a_0$ and for $i \in \mathcal{I}$,
	$$
	h_k(\omega) \coloneqq \gamma_{\overset{\frown}{a_0,a_i}} \left( \min\{1, k \mathrm{dist} (  B_i^c,\omega)\} \right), \quad \omega \in B_i,
	$$
	see Fig.~\ref{subfig:Helper2}.
	Since $\gamma$ is a geodesic, 
	$h_k$ is  by construction continuous on every $B_i$, $i \in \mathcal{I}$. Further, $h_k|_{\partial B_i} = a_0$
	and for any sequence $\{ \omega_j \}_{j\in\NN}$ converging to some $\hat \omega \in \partial B_i$, we have that $h_k(\omega_j) $ converges to 
	$a_0 = h_k(\hat \omega)$. Hence $h_k$ is continuous on $\Omega$ and by construction constant outside 
	of the compact set $\cup_{i\in\mathcal{I}}\overline{B}_i$. This implies, that $h_k$ is even uniformly continuous on $\Omega$.
	Let
	$$
	\max_{i,j \in \mathcal{I}} \sup_{x \in \gamma_{\overset{\frown}{a_0,a_i}} } d^p(x,a_j) \le D.
	$$
	Then we get
	$
	d^p\left(\tilde g(\omega),h_k(\omega) \right) \le D$ for all $k \in \mathbb N,\omega\in\Omega,$ 
	so that $D$ is an integrable bound of $d^p(\tilde g,h_k)$
	and
	$
	\lim_{k \rightarrow \infty} d\left(\tilde g(\omega),h_k(\omega) \right)^p \rightarrow 0 
	$
	pointwise as $k \rightarrow \infty$. By Lebesgue's convergence theorem this implies that
	$$
	\lim_{k\rightarrow \infty} \mathrm{d}_p(\tilde g, h_k) = 0.
	$$
	Finally, choosing $k \in \mathbb N$ such that $\mathrm{d}_p(\tilde g, h_k) < \frac{\varepsilon}{2}$ we obtain with
	\eqref{ab1} that
	$$
	\mathrm{d}_p(g,h_k) \le \mathrm{d}_p(g,\tilde g) + \mathrm{d}_p(\tilde g, h_k) < \varepsilon.
	$$
	This finishes the proof.
	\end{proof}
	
 	In Hadamard spaces we have no zero element so that 
 	a notion of ``compact support'' is not available. 
 	However, the previous proof shows that we can 
 	approximate a function in $L^p(\Omega, \HH)$ arbitrarily well by a function 
 	which is constant outside of a compact set which can be seen as an equivalent. The previous theorem can be applied to prove the next corollary.

 \begin{corollary}\label{stet_norm}
  Let $(\HH,d)$ be a locally compact Hadamard space.\\
 	Let $f \in L^p(\Omega,\HH)$, $p \in [1,\infty)$ and $\{ \varphi^{(j)}\}_{j \in \NN}$, 
 	be a sequence of diffeomorphisms on $\Omega$ such that 
	$\lim_{j \to \infty}\Vert \varphi^{(j)} - \hat \varphi \Vert_{(L^\infty(\Omega)^n} = 0$
	and $|\mathrm{det} (D \varphi^{(j)}) \vert^{-1} \leq C$ for all $j \in \NN$. 
	Then $\limsup_{j \to \infty} \mathrm{d}_p(f\circ \varphi^{(j)}, f \circ \hat \varphi) = 0$.
 \end{corollary}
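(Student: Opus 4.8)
The plan is to exploit the density of uniformly continuous functions from Theorem~\ref{thm:dense} in order to replace the merely $L^p$-function $f$ by a \emph{continuous} one, for which the uniform convergence $\varphi^{(j)} \to \hat\varphi$ can be used directly. Fix $\varepsilon > 0$ and choose, by Theorem~\ref{thm:dense}, a uniformly continuous $g \colon \Omega \to \HH$ with $\mathrm{d}_p(f,g) < \varepsilon$. The triangle inequality for $\mathrm{d}_p$ then gives
\[
\mathrm{d}_p(f\circ\varphi^{(j)}, f\circ\hat\varphi) \le \mathrm{d}_p(f\circ\varphi^{(j)}, g\circ\varphi^{(j)}) + \mathrm{d}_p(g\circ\varphi^{(j)}, g\circ\hat\varphi) + \mathrm{d}_p(g\circ\hat\varphi, f\circ\hat\varphi),
\]
and I would estimate the three terms separately, the outer two uniformly in $j$ and the middle one through uniform continuity.

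For the first and third terms I would use the change of variables $x = \varphi^{(j)}(\omega)$, respectively $x = \hat\varphi(\omega)$. Since each $\varphi^{(j)}$ is a diffeomorphism of $\Omega$, substituting yields
\[
\mathrm{d}_p^p(f\circ\varphi^{(j)}, g\circ\varphi^{(j)}) = \int_\Omega d^p(f(x), g(x)) \, \bigl| \det D\varphi^{(j)}\bigl((\varphi^{(j)})^{-1}(x)\bigr)\bigr|^{-1} \, \dx x \le C \, \mathrm{d}_p^p(f,g),
\]
so this term is at most $C^{1/p}\varepsilon$ \emph{uniformly in $j$}, by the assumed bound on $|\det D\varphi^{(j)}|^{-1}$. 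The identical computation bounds the third term by $C^{1/p}\varepsilon$, using that the limit $\hat\varphi$ is again a diffeomorphism of $\Omega$ with $|\det D\hat\varphi|^{-1}\le C$ (as is the case when $\hat\varphi$ belongs to our admissible set); this simultaneously guarantees that $f\circ\hat\varphi$ lies in $L^p(\Omega,\HH)$. For the middle term I would invoke the uniform continuity of $g$: given $\eta > 0$ there is $\rho > 0$ with $d(g(x),g(y)) < \eta$ whenever $|x-y| < \rho$, and for $j$ large enough $\Vert\varphi^{(j)}-\hat\varphi\Vert_{(L^\infty(\Omega))^n} < \rho$, whence the integrand $d^p(g(\varphi^{(j)}(\omega)),g(\hat\varphi(\omega)))$ stays below $\eta^p$ pointwise and $\mathrm{d}_p^p(g\circ\varphi^{(j)}, g\circ\hat\varphi) \le \eta^p \mu(\Omega) \to 0$ as $j \to \infty$ (here $\mu(\Omega)<\infty$ since $\Omega$ is bounded).

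Combining these estimates and taking $\limsup_{j\to\infty}$ kills the middle term, leaving $\limsup_{j\to\infty}\mathrm{d}_p(f\circ\varphi^{(j)}, f\circ\hat\varphi) \le 2C^{1/p}\varepsilon$; since $\varepsilon > 0$ was arbitrary, the assertion follows. The main obstacle I anticipate is the control of the third, $j$-independent term: its estimate genuinely requires a bound on $|\det D\hat\varphi|^{-1}$, which \emph{cannot} be deduced from the $L^\infty$-convergence of the $\varphi^{(j)}$ alone, so one must ensure that $\hat\varphi$ itself enjoys the inverse-Jacobian bound (via membership in the admissible set). The remaining care is the rigorous justification of the change of variables and the measurability and $L^p$-membership of all compositions, which is precisely what the admissibility of the deformations provides.
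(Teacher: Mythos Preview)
Your proposal is correct and follows essentially the same approach as the paper: approximate $f$ by a uniformly continuous $g$ via Theorem~\ref{thm:dense}, split $\mathrm{d}_p(f\circ\varphi^{(j)},f\circ\hat\varphi)$ into three pieces by the triangle inequality, control the outer two by the change-of-variables bound $C^{1/p}\mathrm{d}_p(f,g)$, and kill the middle one using the uniform continuity of $g$ together with $\|\varphi^{(j)}-\hat\varphi\|_{L^\infty}\to 0$ and $\mu(\Omega)<\infty$. Your explicit flagging of the need for $|\det D\hat\varphi|^{-1}\le C$ in the third term is a point the paper's proof leaves implicit; as you note, in every application of this corollary $\hat\varphi$ arises as a $C^{1,\alpha}$-limit of admissible deformations, so the bound is inherited.
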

 
 \begin{proof}
 	For $f\in L^p(\Omega,\HH)$,  we can construct  by Theorem \ref{thm:dense}
 	a  sequence $\{ f_k\}_{k\in\NN}$ of uniformly continuous functions
 	with $\mathrm{d}_{p}(f,f_k)<\frac{1}{k}$. 
	Then we conclude
 	\begin{align*}
 	\mathrm{d}_p(f\circ \varphi^{(j)}, f \circ \hat \varphi)
	&\leq 
	\mathrm d_p(f\circ \varphi^{(j)}, f_k\circ \varphi^{(j)}) +  \mathrm d_p(f_k\circ \varphi^{(j)}, f_k \circ \hat \varphi) 
	+  \mathrm d_p(f_k \circ \hat \varphi, f \circ \hat \varphi)
 	 \\
	&\leq 
	C \mathrm d_p(f, f_k) +  \mathrm d_p(f_k\circ \varphi^{(j)}, f_k \circ \hat \varphi) +  C \mathrm d_p(f_k, f).
 	\end{align*}
 	By construction the first and the last term can be made arbitrary small as $k \rightarrow \infty$.
 	Now let $k \in \NN$ be fixed. Since $\varphi^{(j)}$ converges uniformly to $\hat \varphi$, we can use the uniform continuity of $f_k$ 
 	in order to conclude that $f_k\circ \varphi^{(j)}$ converges uniformly to $f_k \circ \hat \varphi$. 
 	Now boundedness of $\Omega$ implies that the middle term converges to zero as $j \to\infty$.
 \end{proof} 
 
        In the rest of this paper, we restrict our attention to finite dimensional Hadamard manifolds $(\HH,d)$,
	i.e., $\HH$ has an additional Riemannian manifold structure. 
	Clearly finite dimensional Hadamard manifolds are locally compact.
	By $T_x \HH$ we denote the tangential space of $\HH$ 	at $x \in \HH$.
	Then the geodesics $\gamma_{x,v}$  are determined by their starting point $x \in \HH$ and their tangential $v \in T_x \HH$ 
	at this point.
	We will need the exponential map 
	$\exp_x\colon T_x \HH \rightarrow \HH$ 
	defined by $\exp_x v \coloneqq \gamma_{x,v} (1)$, and the inverse of the exponential map
	$\log_x \coloneqq \exp_x^{-1}\colon \HH \rightarrow T_x \HH$.

\subsection{Sobolev Spaces and Admissible Mappings}\label{sec:admiss}

	Let $C^{k,\alpha}(\overline \Omega)$, $k \in \mathbb N_0$,  denote the H\"older space of functions $f \in C^k(\overline \Omega)$
	for which 
	$$
	\|f\|_{C^{k,\alpha}(\overline \Omega)} 
	\coloneqq \sum_{|\beta| \le k} \|D^\beta f\|_{C(\overline \Omega)} 
	+  \sum_{|\beta| = k}   \sup_{\substack{x,y \in \Omega \\ x \not = y}} \frac{|D^\beta f(x) - D^\beta f(y)|}{|x-y|^\alpha}
	$$
	is finite. With this norm $C^{k,\alpha}(\overline \Omega)$ is a Banach space.
	
	By $W^{m,p}(\Omega)$, $m \in \mathbb N$, $1 \le p < \infty$, we denote the Sobolev space of functions having weak derivatives up to order $m$ 
	in $L^p(\Omega)$ with norm
	\begin{equation*}
	\lVert f\rVert_{W^{m,p}(\Omega)}^p \coloneqq \int_{\Omega}\sum_{\lvert\alpha\rvert \le m} \lvert D^{\alpha} f(x)\rvert^p \dx x.
	\end{equation*}
	We apply the usual abbreviation
	$|D^m f|^p \coloneqq \sum_{\lvert\alpha\rvert = m} \lvert D^{\alpha} f\rvert^p$.
	For $F = (f_\nu)_{\nu=1}^n$, we set 
	$
	|D^m F|^p = \sum_{\nu =1}^n \lvert D^m f_\nu \rvert^p
	$.
	In particular, we are interested in $W^{m,2}(\Omega)$ with
	$m> 1 + \frac{n}{2}$.
	In this case,  $W^{m,2}(\Omega)$ is compactly embedded in 
	$C^{1,\alpha}(\overline \Omega)$ for all $\alpha \in (0,m-1-\frac{n}{2})$ \cite[p.~350, Th.~8.13]{alt2002}
	and consequently $W^{m,2}(\Omega)  \hookrightarrow W^{1,p}(\Omega)$ for all $p \ge 1$.

	For $m> 1 + \frac{n}{2}$, we consider the set
	\begin{equation*}
	\diffeo \coloneqq \{\varphi\in \left(  W^{m,2}(\Omega) \right)^n: \det(D\varphi) > 0 
	\text{ a.e.~in } \Omega, \; \varphi(x) = x  \; \mathrm{for} \; x \in \partial\Omega\},
	\end{equation*}
	which was used as admissible set of deformations in \cite{BER15}.
	By the results of Ball \cite{Ball1981}, we know that $\varphi(\bar{\Omega}) = \bar{\Omega}$ nad $\varphi$ is a.e. injective. 
	By the last property and since $\overline \Omega$ is  bounded, it follows immediately  
	for all $\varphi \in \diffeo$ that
	\begin{equation} \label{sieben}
	\| \varphi \|_{(L^\infty(\Omega))^n} \le C, \qquad \| \varphi \|_{(L^2(\Omega))^n} \le  C,
	\end{equation}
	with constants depending only on $\Omega$. We have $\varphi \in C^{1,\alpha}(\overline \Omega)$ 
		and by the inverse mapping theorem  $\varphi^{-1}$ exits locally around a.e.~$x \in \Omega$ and is continuously differentiable on the corresponding neighborhood.
	However, to guarantee that $\varphi^{-1}$ is continuous (or, even more, continuously differentiable) further assumptions
	are required, see \cite[Theorem 2]{Ball1981}.
	Take for example the function $\varphi(x) \coloneqq x^3$ on $\Omega \coloneqq (-1,1)$  which is in $\diffeo$,  
	but $\varphi^{-1} (x) = \mathrm{sgn} (x) |x|^\frac13$
	is not continuously differentiable. 
	Furthermore,  $I \in L^2(\Omega,\HH)$ and $\varphi \in \diffeo$ do not guarantee that $I \circ \varphi \in L^2(\Omega,\HH)$,
	as the example $I(x) \coloneq x^{-\frac14}$ in $L^2((0,1),\mathbb R)$ and $\varphi(x) \coloneqq x^2$ shows. 
		
	Therefore, we introduce, for \emph{small} fixed $\epsilon > 0$, the \emph{admissible set}
	\begin{equation*}\label{admiss_eps}
	\mathcal{A}_\epsilon \coloneqq \{\varphi\in \left(  W^{m,2}(\Omega) \right)^n: \det(D\varphi) \ge \epsilon, \; \varphi(x) = x  \; \mathrm{for} \; x \in \partial\Omega\} \subset \diffeo.
	\end{equation*}
	Later, in Theorem \ref{lem:uni:seq}, we have to solve a system of equations which entries depend on $\det(D\varphi)$. 
	For stability reasons, we do not want that the determinant becomes arbitrary small. 
	This can be avoided by introducing $\epsilon$.
	Moreover, by the inverse mapping theorem, $\varphi\in\mathcal{A}_\epsilon$ is a diffeomorphism, although in general $\varphi^{-1} \notin\mathcal{A}_\epsilon$.
	Further,  $I \in L^2(\Omega,\HH)$ and $\varphi \in \mathcal{A}_\epsilon$ imply $I \circ \varphi \in L^2(\Omega,\HH)$.
	We mention that the space of images $L^\infty(\Omega, \mathbb R)$ was discussed in the thesis \cite{Effland17}.
	However, working in the Hadamard space $L^2(\Omega,\HH)$ simplifies the proofs, in particular we can use
	the concept of weak convergence in these spaces.
	

\section{Minimizers of the Space Continuous Model}\label{sec:dm}
	Let $\HH$ be a finite dimensional Hadamard manifold.
	Due to our application we call the functions from $L^2(\Omega,\HH)$ images.
	Mappings $\varphi \in \mathcal{A}_\epsilon$ 
	can act on images $I \in L^2(\Omega,\HH)$ 
	by 
	\[\varphi\circ I = I(\varphi), \quad \varphi \in \mathcal{A}_\epsilon .\]
	
	Inspired by the time discrete geodesic paths model for images with values in $\mathbb{R}^n$ proposed in \cite{BER15},  
	we introduce a general morphing model for manifold-valued images and prove the existence of minimizers of this model
	in the next subsection. 
	Although we can basically follow the lines in \cite{BER15},
	all proofs are new even for the Euclidean setting due to the different admissible set.
	Moreover, the manifold-valued setting requires some care when considering the minimization of the image sequence.
	Then, in Subsection \ref{sec:special_model}, we specify the model by choosing the linearized elastic potential as regularizer.
	
	\subsection{Space Continuous Model}\label{sec:general_model}
	
	Let $W \colon\RR^{n,n}\to\RR_{\le 0}$ be a lsc  mapping. 	
	Let $\gamma>0$ and $m>1+\frac{n}{2}$. Let $K \ge 2$ be an integer.
	Given a template image and a reference image
	\[I_0 = T \in  L^2(\Omega,\HH), \quad I_K = R \in  L^2(\Omega,\HH),\]
	respectively, we are searching for an image sequence 
	\[\vec I \coloneqq (I_1,\dots,I_{K-1}) \in \left( L^2(\Omega,\HH) \right)^{K-1},\] see Fig.~\ref{fig:sequence},
	which minimizes the energy
	\begin{align} \label{d_path}
	\boldsymbol{\mathcal{J}} (\vec I) 
	&\coloneqq \sum_{k = 1}^{K}  \inf_{\varphi_k\in \mathcal{A}_\epsilon} 
	\int_\Omega W(D\varphi_k(x))+\gamma\lvert D^m\varphi_k(x)\rvert^2\dx x
	+ \mathrm{d}_2^2(I_{k-1}\circ \varphi_{k},I_{k})\\\notag
	&= \inf_{\boldsymbol{\varphi} \in \mathcal{A}_\epsilon^K}  \sum_{k = 1}^{K}\int_\Omega W(D\varphi_k(x))+\gamma\lvert D^m\varphi_k(x)\rvert^2\dx x
	+ \mathrm{d}_2^2(I_{k-1}\circ \varphi_{k},I_{k})\\\notag
	&= \inf_{\boldsymbol{\varphi} \in \mathcal{A}_\epsilon^K} \mathcal{J}(\vec I, \boldsymbol{\varphi}),
	\end{align}
	where $\boldsymbol{\varphi} \coloneqq (\varphi_1,\dots,\varphi_K)$
	and
	\begin{align*}
	\mathcal{J}(\vec I, \boldsymbol{\varphi}) 
	&\coloneqq
	\sum_{k = 1}^{K} 
	\int_\Omega W(D\varphi_k(x))+\gamma\lvert D^m\varphi_k(x)\rvert^2\dx x
	+ \mathrm{d}_2^2(I_{k-1}\circ \varphi_{k},I_{k}) .
	\end{align*}
	Note that for simplicity of the notation we moved the parameter $\frac{1}{\delta}$ of the squared distance term to $W$, see \eqref{eq_spez_W},
	and used a shift in the $\varphi_k$.
	
	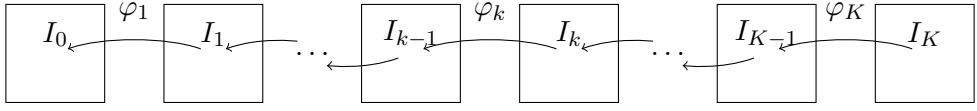
\begin{figure}[t]
	\centering
	\begin{TikZOrPDF}{ImageSeq}
		\begin{tikzpicture}[scale=1.3]	
		\draw (0,0) rectangle (1,1);
		\node at (0.5,.7) (I0) {$I_0$};
		\draw[<-,shorten <= 5,shorten >= 5] (.5,.5) ..controls +(20:.5)  and +(160:.5).. (2.1,.5) node [pos = 0.5,label = 90:$\varphi_1$](phi0) {};
		\draw (1.6,0) rectangle (2.6,1);
		\node at (2.1,.7) (I1) {$I_1$};
		\draw[<-,shorten <= 5] (2.1,.5) ..controls +(20:.5)  and +(180:.025).. (2.95,.625);
		
		\node at (3.1,.5) (d1) {$\Huge{\dots}$};
		\draw[<-,shorten >= 5] (3.25,.375) ..controls +(0:.025)  and +(200:.5).. (4.1,.5);
		\draw (3.6,0) rectangle (4.6,1);
		\node at (4.1,.7) (Ik) {$I_{k-1}$};
		\draw[<-,shorten <= 5,shorten >= 5] (4.1,.5) ..controls +(20:.5)  and +(160:.5).. (5.7,.5) node [pos = 0.5,label = 90:$\varphi_{k}$](phik) {};
		\draw (5.2,0) rectangle (6.2,1);
		\node at (5.7,.7) (Ik1) {$I_{k}$};
		\draw[<-,shorten <= 5] (5.7,.5) ..controls +(20:.5)  and +(180:.025).. (6.55,.625);
		\node at (6.7,.5) (Ik) {$\Huge{\dots}$};
		\draw[<-,shorten >= 5] (6.85,.375) ..controls +(0:.025)  and +(200:.5).. (7.7,.5);
		\draw (7.2,0) rectangle (8.2,1);
		\node at (7.7,.7) (Ik) {$I_{K-1}$};
		\draw[<-,shorten <= 5,shorten >= 5] (7.7,.5) ..controls +(20:.5)  and +(160:.5).. (9.3,.5) node [pos = 0.5,label = 90:$\varphi_{K}$](phiK) {};
		\draw (8.8,0) rectangle (9.8,1);
		\node at (9.3,.7) (Ik1) {$I_{K}$};
		\end{tikzpicture}
	\end{TikZOrPDF}
	\caption{Illustration of the image and the diffeomorphism path\label{fig:sequence}}
\end{figure}
	
	Typical strategies for minimizing such a functional is alternating minimization over $\vec I$ and $\boldsymbol{\varphi}$.
	In the following, we show that the corresponding subproblems have a minimizer.
	The results can then be used to show that the whole functional has a minimizer.
	\\
	
	First, we fix the image sequence $\vec{I} \in \left( L^2(\Omega,\HH) \right)^{K-1}$ and  show that $\mathcal{J}(\vec I, \cdot)$ 
	has a minimizer $\boldsymbol{\varphi} \in \diffeo^K$. 
	It suffices to prove that each of 
	the registration problems 
	\begin{equation*}
	{\mathcal R} (\varphi_k;I_{k-1},I_k) \coloneqq \int_\Omega W(D\varphi_k(x)) + \gamma\lvert D^m\varphi_k(x)\rvert^2\dx x
	+ \mathrm{d}_2^2(I_{k-1}\circ \varphi_{k},I_{k})
	\end{equation*}
	has a minimizer in $\mathcal{A}_\epsilon$, $k=1,\ldots,K$. Note that we can show the more general result for
	$\varphi \in \diffeo$ for this functional. If we restrict ourselves to  $\varphi \in \mathcal{A}_\epsilon$  the proof can be simplified,
	see Corollary~\ref{lem:ex:phi_eps}.

\begin{theorem}\label{lem:ex:phi}
		Let $W\colon\RR^{n, n}\to\RR_{\le 0}$ be a lsc mapping with the property
	\begin{equation} \label{prop3}
	W(A)=\infty \quad \mathrm{if} \quad \det A \le 0.
	\end{equation}
		Further, let $T,R\in L^2(\Omega,\HH)$ be given.
		Then there exists  $\hat \varphi\in\diffeo$ minimizing
		\begin{equation*}
		 \mathcal{R}(\varphi;T,R)  =
		\int_{\Omega}W(D \varphi)+\gamma\lvert D^m \varphi\rvert^2 + d^2\bigr(T\circ\varphi(x),R(x)\bigl) \dx x
		\end{equation*}
		over all $\varphi \in \diffeo$.
\end{theorem}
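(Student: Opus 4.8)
The plan is to run the direct method of the calculus of variations in the reflexive Hilbert space $\left(W^{m,2}(\Omega)\right)^n$. First I would check that the infimum is finite: the identity map $\mathrm{id}$ belongs to $\diffeo$ and satisfies $\det(D\,\mathrm{id})=1>0$, so $W(D\,\mathrm{id})$ is finite, while $D^m\,\mathrm{id}=0$ and the data term equals $\mathrm{d}_2^2(T,R)<\infty$; hence $\mathcal R(\mathrm{id};T,R)<\infty$ and $\inf_{\diffeo}\mathcal R$ is finite. I then pick a minimizing sequence $\{\varphi_j\}\subset\diffeo$ with $\mathcal R(\varphi_j)\le C$. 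Since the regularizing and the data term are nonnegative and $W$ is bounded from below, this bound controls $\gamma\int_\Omega|D^m\varphi_j|^2\,\dx x$, so $\int_\Omega|D^m\varphi_j|^2\,\dx x$ stays bounded. Together with the uniform $L^2$-bound \eqref{sieben} and a Gagliardo--Nirenberg interpolation inequality on the bounded Lipschitz domain $\Omega$ (equivalently, the equivalence $\lVert u\rVert_{W^{m,2}}\simeq\lVert D^m u\rVert_{L^2}+\lVert u\rVert_{L^2}$, which estimates the intermediate derivatives by the highest and the lowest one), this yields a uniform bound on $\lVert\varphi_j\rVert_{(W^{m,2}(\Omega))^n}$.

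Next I would extract a subsequence. By reflexivity there is a subsequence (not relabelled) with $\varphi_j\rightharpoonup\hat\varphi$ weakly in $\left(W^{m,2}(\Omega)\right)^n$, and by the compact embedding $W^{m,2}(\Omega)\hookrightarrow C^{1,\alpha}(\overline\Omega)$ recalled in Subsection~\ref{sec:admiss} the same subsequence converges strongly in $C^1(\overline\Omega)$; in particular $D\varphi_j\to D\hat\varphi$, hence $\det(D\varphi_j)\to\det(D\hat\varphi)$ uniformly on $\overline\Omega$, and the boundary condition $\hat\varphi|_{\partial\Omega}=\mathrm{id}$ is inherited. To see $\hat\varphi\in\diffeo$ I would invoke property \eqref{prop3}: as $W$ is lower semicontinuous and bounded below and $D\varphi_j\to D\hat\varphi$ pointwise, Fatou's lemma gives $\int_\Omega W(D\hat\varphi)\le\liminf_j\int_\Omega W(D\varphi_j)\le C<\infty$, so $W(D\hat\varphi(x))<\infty$ for a.e.\ $x$, which by \eqref{prop3} forces $\det(D\hat\varphi)>0$ a.e.

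It remains to prove sequential lower semicontinuity of $\mathcal R$ along this subsequence. The regularizer $\varphi\mapsto\gamma\int_\Omega|D^m\varphi|^2\,\dx x$ is convex and strongly continuous on $\left(W^{m,2}(\Omega)\right)^n$, hence weakly lower semicontinuous, and the Fatou argument above already handles the $W$-term. The delicate term, and what I expect to be the main obstacle, is the data term $\mathrm{d}_2^2(T\circ\varphi_j,R)=\int_\Omega d^2(T(\varphi_j(x)),R(x))\,\dx x$: because $T$ is only an $L^2$-map, the compositions $T\circ\varphi_j$ need not converge, and on $\diffeo$ there is \emph{no} uniform lower bound on $\det(D\varphi_j)$, so Corollary~\ref{stet_norm} does not apply directly (this is exactly the difficulty that disappears on $\mathcal A_\epsilon$, where $\det(D\varphi)\ge\epsilon$ makes Corollary~\ref{stet_norm} available and shortens the argument, cf.\ Corollary~\ref{lem:ex:phi_eps}).

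To overcome this I would localise by exhaustion. For $\eta>0$ set $G_\eta\coloneqq\{x\in\Omega:\det(D\hat\varphi(x))>\eta\}$; since $\det(D\hat\varphi)>0$ a.e., $\mu(\Omega\setminus G_\eta)\to0$ as $\eta\to0$. From the uniform convergence $\det(D\varphi_j)\to\det(D\hat\varphi)$ one has $\det(D\varphi_j)>\eta/2$ on $G_\eta$ for $j$ large, so the change-of-variables formula for the a.e.\ injective $\varphi_j$ yields, for any uniformly continuous $T_k$ approximating $T$ as in Theorem~\ref{thm:dense}, the \emph{$j$-uniform} estimate $\int_{G_\eta}d^2(T\circ\varphi_j,T_k\circ\varphi_j)\,\dx x\le\tfrac{2}{\eta}\,\mathrm{d}_2^2(T,T_k)$. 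Combining this with the uniform convergence $T_k\circ\varphi_j\to T_k\circ\hat\varphi$ (uniform continuity of $T_k$, uniform convergence of $\varphi_j$, boundedness of $\Omega$) in a three-term estimate shows $T\circ\varphi_j\to T\circ\hat\varphi$ in $L^2(G_\eta,\HH)$, whence $\int_{G_\eta}d^2(T\circ\varphi_j,R)\,\dx x\to\int_{G_\eta}d^2(T\circ\hat\varphi,R)\,\dx x$. Using non-negativity of the integrand and letting $\eta\to0$ then gives $\mathrm{d}_2^2(T\circ\hat\varphi,R)\le\liminf_j\mathrm{d}_2^2(T\circ\varphi_j,R)$ (and, in passing, $T\circ\hat\varphi\in L^2(\Omega,\HH)$). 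Adding the three lower-semicontinuity statements yields $\mathcal R(\hat\varphi)\le\liminf_j\mathcal R(\varphi_j)=\inf_{\diffeo}\mathcal R$, so $\hat\varphi\in\diffeo$ is the desired minimizer.
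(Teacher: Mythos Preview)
Your proof is correct and follows essentially the same route as the paper: direct method in $\left(W^{m,2}(\Omega)\right)^n$, Gagliardo--Nirenberg plus \eqref{sieben} for the uniform bound, compact embedding into $C^{1,\alpha}(\overline\Omega)$, Fatou together with \eqref{prop3} to show $\hat\varphi\in\diffeo$, and weak lower semicontinuity of the convex term $\gamma\int|D^m\varphi|^2$.

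The only noteworthy difference is in the handling of the data term. The paper argues by contradiction: assuming the liminf inequality fails, it restricts to a compact set $\mathcal K$ contained in the open full-measure set where all $\det(D\varphi^{(j)})$ and $\det(D\hat\varphi)$ are nonzero, and uses a factorisation $d^2-d^2=(d+d)(d-d)$ together with Corollary~\ref{stet_norm} on $\mathcal K$ to reach a contradiction. You instead exhaust $\Omega$ directly by the open superlevel sets $G_\eta=\{\det(D\hat\varphi)>\eta\}$, observe that the uniform convergence of the Jacobians gives $\det(D\varphi_j)>\eta/2$ on $G_\eta$ for large $j$, and then run the change-of-variables/density argument (essentially the proof of Corollary~\ref{stet_norm}) on $G_\eta$ before letting $\eta\to0$. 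The two arguments use the same ingredients and the same key observation---that one must first localise to where the Jacobian is uniformly bounded below---so this is a difference of organisation, not of substance; your direct version is arguably a little cleaner than the contradiction.
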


\begin{proof} 
	1. Let $\{ \varphi^{(j)} \}_{j\in\NN}$, $\varphi^{(j)} \in \diffeo,$ be a minimizing sequence
	of $\mathcal{R}$.  
	Then  we have $\mathcal{R}( \varphi^{(j)};T,R ) \le C$ for all $j \in \mathbb N$.
	This implies that $\{ \varphi^{(j)}\}_{j\in\NN}$ has uniformly bounded $(W^{m,2}(\Omega))^n$ semi-norm,
	and by \eqref{sieben} the sequence is also uniformly bounded in $(L^2(\Omega))^n$.	
	Now we apply the Gagliardo-Nirenberg inequality, see Appendix~\ref{app:gn} Theorem~\ref{th:gn} in the form given in Remark~\ref{th:gnr}, which states that for all $0 \le i < m$ it holds
	\begin{equation*}
	\lVert D^i \varphi^{(j)}\rVert_{L^2(\Omega)}
	\le C_1\lVert D^m \varphi_\nu^{(j)}\rVert_{L^2(\Omega)}
	+C_2\lVert \varphi_\nu^{(j)}\rVert_{L^2(\Omega)}, \quad \nu =1,\ldots,n.
	\end{equation*}%
	All terms on the right hand side are uniformly bounded.
	Hence, the $(W^{m,2}(\Omega))^n$ norm of $\{ \varphi^{(j)}\}_{j\in\NN}$ is uniformly bounded.
	Since $W^{m,2}(\Omega)$ is reflexive, there exists a subsequence $\{ \varphi^{(j_l)} \}_{l\in\NN}$, 
	which converges weakly to some function $\hat \varphi$ in $\left( W^{m,2} (\Omega)\right)^n$. 
	By the compact embedding $W^{m,2} (\Omega)\hookrightarrow C^{1,\alpha}\big(\overline{\Omega}\big)$, $\alpha \in (0,m-1-\frac{n}{2})$,
	this subsequence converges strongly to $\hat \varphi$ in $\left( C^{1,\alpha}\big(\overline{\Omega}\big) \right)^n$.
	We denote this subsequence by $\{ \varphi^{(j)} \}_{j\in\NN}$ again.
	\\[1ex]
	2. Next we show that $\hat \varphi$ is in the set ${\mathcal A}$.
	By part 1 of the proof $D \varphi^{(j)}$ converges uniformly to $D \hat \varphi$.
	Since $W$ is lsc, this implies 
	\[
	\liminf_{j\to\infty} W(D \varphi^{(j)})(x) \ge W(D \hat \varphi)(x)
	\]
	for all $x \in \Omega$ and since $W$ is nonnegative we obtain by Fatou's lemma 
	\[
	\int_{\Omega} W(D \hat \varphi(x))\dx x \le \liminf_{j \rightarrow \infty} \int_{\Omega} W(D\varphi^{(j)} (x))\dx x \le C.
	\]
	By \eqref{prop3}  this implies $\det( D \hat \varphi(x)) > 0$ a.e.. Further the boundary condition is fulfilled so that $\hat \varphi\in\diffeo$. 
	It remains to show that $\lim_{j\to\infty}\mathcal{R}(\varphi^{(j)};T,R) = \mathcal{R}(\hat \varphi;T,R)$. 
	\\[1ex]
	3. We prove that 
	\[\mathrm{d}^2_2(T( \hat \varphi),R) \leq \liminf_{j \to \infty} \mathrm{d}^2_2(T(\varphi^{(j)}),R).\]		
	Assume $\mathrm{d}^2_2(T( \hat \varphi),R) > \liminf_{j \to \infty} \mathrm{d}_2^2(T(\varphi^{(j)}),R)$. 
	Further, assume for a moment that $\mathrm{d}^2_2(T( \hat \varphi),R)$ is finite.
	Then we can find an $\delta>0$ such that 
	$$\mathrm{d}_2^2(T( \hat \varphi),R) - \delta= \liminf_{j \to \infty} \mathrm{d}_2^2(T(\varphi^{(j)}),R).$$ 
	The sets 
	$\{x \in \Omega: \det \varphi^{(j)} = 0, \;  j \in \mathbb N \}$
	and $\{x \in \Omega: \det \hat \varphi = 0\}$ have measure zero.
	Let  $\widetilde{\Omega}$ be their complementary set, which is open since the $\varphi^{(j)}$ are convergent.
	By the inverse mapping theorem we know that $(\varphi^{(j)})^{-1}$ and $\hat \varphi^{-1}$ 
	are continuously differentiable on $\widetilde{\Omega}$.
	Since $\widetilde \Omega$ is open we can use monotone convergence 
	to find a compact set $\mathcal K$ such that ${\mathcal K} \subset \widetilde \Omega$ and
	\[\mathrm{d}^2_2(T( \hat \varphi),R) \leq \int_{{\mathcal K}}d^2\bigl(T(\hat \varphi(x)),R(x)\bigr) \dx x + \frac{\delta}{2}.\] 
	Using that the integrands are nonnegative, we get
	\begin{equation} \label{star}
	\liminf_{j \to \infty}\int_{{\mathcal K}}d^2\bigl(T(\varphi^{(j)}(x)),R(x)\bigr) \dx x < \int_{{\mathcal K}}d^2\bigl(T(\hat \varphi(x)),R(x)\bigr) \dx x.
	\end{equation}
	For the case $\mathrm{d}^2_2(T( \hat \varphi),R)  = \infty$ we obtain the same inequality, since the left-hand side is less than some finite constant $C$ independent of $\mathcal K$. However, $\mathcal K$ can be chosen such that the right-hand side gets arbitrary large using monotone convergence. Note that it will be still finite due to the change of variables formula, \cite[Theorem 7.26]{Rudin1964}.

	Now, for the set $\mathcal K$, we obtain
		\begin{align}
		\notag&\bigg\vert\int_{{\mathcal K}}d^2\bigl(T(\varphi^{(j)}(x)),R(x)\bigr) - d^2\bigl(T(\hat \varphi(x)),R(x)\bigr)\dx x \bigg \vert\\\notag 
		\le&
		\int_{{\mathcal K}}\Bigl(d\bigl(T(\varphi^{(j)}(x)),R(x)\bigr)+
		d\bigl(T(\hat \varphi(x)),R(x)\bigr)\Bigr)\\\notag& \quad\Bigl\vert d\bigl(T(\varphi^{(j)}(x)),R(x)\bigr)-
		d\bigl(T(\hat \varphi(x)),R(x)\bigr)\Bigr \vert \dx x\\\notag
		\le&
		\int_{{\mathcal K}}\Bigl(d\bigl(T(\varphi^{(j)}(x)),R(x)\bigr)+d\bigl(T(\hat \varphi(x)),R(x)\bigr)\Bigr)d\Bigl(T\bigl(\varphi^{(j)}(x)\bigr),T\bigl(\hat \varphi(x)\bigr)\Bigr)\dx x\\
		\le& \, C_1\sqrt{\int_{{\mathcal K}}d^2\Bigl(T\bigl(\varphi^{(j)}(x)\bigr),T\bigl(\hat \varphi(x)\bigr)\Bigr)\dx x} \label{eq:ineq:data}
		\end{align}
		with
		\[C_1 \coloneqq \sqrt{\int_{{\mathcal K}} d^2\bigl(T(\varphi^{(j)}(x)),R(x)\bigr)\dx x} + \sqrt{\int_{{\mathcal K}} d^2\bigl(T(\hat \varphi(x)),R(x)\bigr)\dx x} < \infty.\]
		This constant is finite since the first term is uniformly bounded by construction and the second term is bounded by construction of $\mathcal K$. Now, Corollary~\ref{stet_norm} implies that the term
		\begin{align*}
		\int_{{\mathcal K}} d^2\Bigl(T\bigl(\varphi^{(j)}(x)\bigr),T\bigl(\hat \varphi(x)\bigr)\Bigr)\dx x 
		\end{align*}
		in equation~\eqref{eq:ineq:data} converges to zero. This yields a contradiction to \eqref{star}.\\		
		4. 
		By the previous steps we have that  the three summands in $\mathcal{R}$ are (weakly) lower semicontinuous.
		Then we get
		\begin{align*}
		\mathcal{R}(\hat \varphi;T,R) 
		&\le \liminf_{j\to\infty}  \int_{\Omega}W(D \varphi^{(j)})+\gamma\lvert D^m \varphi^{(j)} \rvert^2+d^2\bigr(T(\varphi^{(j)} (x)),R(x)\bigl) \dx x\\
		&= \inf_{\varphi\in\diffeo}\mathcal{R}( \varphi)
		\end{align*}
		which proves the claim.
\end{proof}

Following the lines of the previous theorem, we can prove its analogue for $\mathcal{A}_\epsilon$.

 \begin{corollary}\label{lem:ex:phi_eps}
		Let $W\colon\RR^{n, n}\to\RR_+$ be a lsc mapping.
		Further, let $T,R\in L^2(\Omega,\HH)$ be given.
		Then there exists  $\hat \varphi\in\mathcal{A}_\epsilon$ minimizing
		\begin{equation*}
		 \mathcal{R}(\varphi;T,R) \coloneqq
		\int_{\Omega}W(D \varphi)+\gamma\lvert D^m \varphi\rvert^2 + d^2\bigr(T\circ\varphi(x),R(x)\bigl) \dx x
		\end{equation*}
		over all $\varphi \in \mathcal{A}_\epsilon$.
\end{corollary}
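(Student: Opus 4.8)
The plan is to follow the structure of the proof of Theorem~\ref{lem:ex:phi}, exploiting that the hard constraint $\det(D\varphi) \ge \epsilon$ built into $\mathcal{A}_\epsilon$ takes over the role played there by the penalization property~\eqref{prop3} and, more importantly, renders the delicate treatment of the data term unnecessary. Since the whole point of introducing $\mathcal{A}_\epsilon$ is the uniform determinant bound, I expect the proof to be a genuine simplification rather than a new argument.

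First I would take a minimizing sequence $\{\varphi^{(j)}\}_{j\in\NN} \subset \mathcal{A}_\epsilon$ with $\mathcal{R}(\varphi^{(j)};T,R) \le C$. Because $W \ge 0$ and the data term is nonnegative, the energy bound controls $\gamma\lVert D^m\varphi^{(j)}\rVert_{L^2(\Omega)}^2$; together with the uniform $(L^2(\Omega))^n$-bound from~\eqref{sieben} and the Gagliardo–Nirenberg inequality (exactly as in Step~1 of Theorem~\ref{lem:ex:phi}) this yields a uniform bound on the full $(W^{m,2}(\Omega))^n$ norm. By reflexivity and the compact embedding $W^{m,2}(\Omega) \hookrightarrow C^{1,\alpha}(\overline\Omega)$ I would extract a subsequence, still denoted $\{\varphi^{(j)}\}$, converging weakly in $(W^{m,2}(\Omega))^n$ and strongly in $(C^{1,\alpha}(\overline\Omega))^n$ to some $\hat\varphi$; in particular $\varphi^{(j)} \to \hat\varphi$ and $D\varphi^{(j)} \to D\hat\varphi$ uniformly on $\overline\Omega$.

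The decisive gain over the theorem lies in showing $\hat\varphi \in \mathcal{A}_\epsilon$ and in handling the data term. Since $\det$ is continuous and $D\varphi^{(j)} \to D\hat\varphi$ uniformly, the pointwise constraint $\det(D\varphi^{(j)}) \ge \epsilon$ passes directly to the limit, giving $\det(D\hat\varphi) \ge \epsilon$; the boundary condition likewise survives uniform convergence, so $\hat\varphi \in \mathcal{A}_\epsilon$ with no Fatou/nonnegativity argument needed. For the data term, the uniform lower bound $\det(D\varphi^{(j)}) \ge \epsilon$ furnishes $\lvert\det(D\varphi^{(j)})\rvert^{-1} \le 1/\epsilon$ for all $j$, which is precisely the hypothesis of Corollary~\ref{stet_norm}. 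Applying that corollary with $f = T$ gives $\lim_{j\to\infty} \mathrm{d}_2(T\circ\varphi^{(j)}, T\circ\hat\varphi) = 0$, and the reverse triangle inequality $\lvert\mathrm{d}_2(T\circ\varphi^{(j)},R) - \mathrm{d}_2(T\circ\hat\varphi,R)\rvert \le \mathrm{d}_2(T\circ\varphi^{(j)}, T\circ\hat\varphi)$ then shows that the data term is in fact \emph{continuous} along the sequence. This is what lets me bypass the intricate compact-exhaustion argument of Step~3 in Theorem~\ref{lem:ex:phi}.

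Finally I would combine the three contributions: $\int_\Omega W(D\hat\varphi)\,\dx x \le \liminf_{j} \int_\Omega W(D\varphi^{(j)})\,\dx x$ by lower semicontinuity of $W$, uniform convergence of $D\varphi^{(j)}$, and Fatou's lemma; $\gamma\lVert D^m\hat\varphi\rVert_{L^2(\Omega)}^2 \le \liminf_{j} \gamma\lVert D^m\varphi^{(j)}\rVert_{L^2(\Omega)}^2$ by weak lower semicontinuity of the norm; and continuity of the data term from the previous paragraph. Summing gives $\mathcal{R}(\hat\varphi;T,R) \le \liminf_{j} \mathcal{R}(\varphi^{(j)};T,R) = \inf_{\varphi\in\mathcal{A}_\epsilon}\mathcal{R}(\varphi;T,R)$, and since $\hat\varphi \in \mathcal{A}_\epsilon$ it is the sought minimizer. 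The only real point requiring care—hence the \emph{main obstacle}—is verifying that every hypothesis of Corollary~\ref{stet_norm} (the diffeomorphism property of the $\varphi^{(j)}$, the uniform $(L^\infty(\Omega))^n$-convergence, and the uniform determinant bound) is genuinely satisfied by the minimizing sequence; once these are confirmed, the remainder is routine.
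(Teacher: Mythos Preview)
Your proposal is correct and matches the paper's approach: the paper's own proof is a single sentence stating that one follows the lines of Theorem~\ref{lem:ex:phi} but that the compact-set approximation of Step~3 is unnecessary since all integrals are already over $\Omega$. You have spelled out precisely this simplification, correctly identifying that the uniform bound $\det(D\varphi^{(j)})\ge\epsilon$ both makes $\mathcal{A}_\epsilon$ closed under the $C^{1,\alpha}$ limit and allows a direct appeal to Corollary~\ref{stet_norm} for the data term.
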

	
\begin{proof}
The proof follows the lines of the previous one, but simplifies since all integrals are defined over $\Omega$ and the approximation with compact sets is not necessary.
\end{proof}

Next we fix the sequence of  admissible mappings $\boldsymbol{\varphi}\in \mathcal{A}_\epsilon^K$ and ask for a minimizer of 
\begin{align*}
J_{\boldsymbol{\varphi}}(\vec I) 
\coloneqq 
\sum_{k = 1}^{K}  \mathrm{d}^2_2(I_{k-1}\circ \varphi_{k},I_{k}) \quad \mbox{subject to} \quad I_0 = T, \; I_K = R.
\end{align*}
Note that we consider admissible functions in $\mathcal{A}_\epsilon$ now,
since the composition $I \circ \varphi$ of a deformation $\varphi$ and an image $I\in L^2(\Omega,\HH)$ 
should be again in $L^2(\Omega,\HH)$.
	
\begin{lemma}\label{lem:ex:seq}
For fixed $\boldsymbol{\varphi} \in \mathcal{A}_\epsilon^K$,
there exists a sequence $\hat{\vec{I}} \in \left( L^2(\Omega,\HH) \right)^{K-1}$ 
minimizing $J_{\boldsymbol{\varphi}}$.
\end{lemma}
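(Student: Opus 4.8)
The plan is to run the direct method in the product Hadamard space $\left(L^2(\Omega,\HH)\right)^{K-1}$ endowed with the metric $\mathrm{d}_2^2(\vec I,\vec I')=\sum_{k=1}^{K-1}\mathrm{d}_2^2(I_k,I_k')$, which is again a Hadamard space since each factor is one. I start from a minimizing sequence $\{\vec I^{(j)}\}_{j\in\NN}$ with $J_{\boldsymbol{\varphi}}(\vec I^{(j)})\le C$, so that each summand $\mathrm{d}_2^2(I_{k-1}^{(j)}\circ\varphi_k,I_k^{(j)})$ is bounded by $C$. The first task is to show that this sequence is bounded. Fixing a reference point $a\in\HH$ and writing $\mathbf a$ for the associated constant image, I use that $\varphi_k$ maps $\overline\Omega$ onto $\overline\Omega$, whence $\mathbf a\circ\varphi_k=\mathbf a$. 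The change of variables formula together with $\det(D\varphi_k)\ge\epsilon$ gives $\mathrm{d}_2(I_{k-1}^{(j)}\circ\varphi_k,\mathbf a)=\mathrm{d}_2(I_{k-1}^{(j)}\circ\varphi_k,\mathbf a\circ\varphi_k)\le\epsilon^{-1/2}\mathrm{d}_2(I_{k-1}^{(j)},\mathbf a)$. Combining this with the triangle inequality and $\mathrm{d}_2(I_{k-1}^{(j)}\circ\varphi_k,I_k^{(j)})\le\sqrt C$ yields $\mathrm{d}_2(I_k^{(j)},\mathbf a)\le\sqrt C+\epsilon^{-1/2}\mathrm{d}_2(I_{k-1}^{(j)},\mathbf a)$. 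Since $I_0^{(j)}=T$ is fixed, a finite induction over $k=1,\dots,K-1$ bounds every $\mathrm{d}_2(I_k^{(j)},\mathbf a)$ uniformly in $j$, so $\{\vec I^{(j)}\}$ is bounded.

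The heart of the argument is weak lower semicontinuity of $J_{\boldsymbol{\varphi}}$. The key observation is that precomposition with the \emph{fixed} diffeomorphism $\varphi_k$ sends geodesics to geodesics. Indeed, the geodesic in $L^2(\Omega,\HH)$ between two images is the pointwise geodesic, $\gamma_{I,I'}(t)(y)=\gamma_{\overset{\frown}{I(y),I'(y)}}(t)$, so evaluating at $\varphi_k(x)$ gives $(\gamma_{I,I'}(t)\circ\varphi_k)(x)=\gamma_{\overset{\frown}{(I\circ\varphi_k)(x),(I'\circ\varphi_k)(x)}}(t)$, i.e.\ $\gamma_{I,I'}(t)\circ\varphi_k=\gamma_{\overset{\frown}{I\circ\varphi_k,\,I'\circ\varphi_k}}(t)$. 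Hence, along any geodesic $t\mapsto\vec I(t)$ in the product space, both $t\mapsto I_{k-1}(t)\circ\varphi_k$ and $t\mapsto I_k(t)$ are geodesics, and joint convexity of the distance in the Hadamard space $L^2(\Omega,\HH)$, inherited from~\eqref{eq:jointconvexity}, shows that each $t\mapsto\mathrm{d}_2^2(I_{k-1}(t)\circ\varphi_k,I_k(t))$ is convex. Therefore $J_{\boldsymbol{\varphi}}$ is geodesically convex, and being continuous it is weakly lower semicontinuous on $\left(L^2(\Omega,\HH)\right)^{K-1}$.

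To conclude, I use the standard fact for Hadamard spaces that bounded sequences admit weakly convergent subsequences (see~\cite{Bac14}) to extract $\vec I^{(j)}\rightharpoonup\hat{\vec I}$, with the constraints $\hat I_0=T$, $\hat I_K=R$ retained. Weak lower semicontinuity then gives $J_{\boldsymbol{\varphi}}(\hat{\vec I})\le\liminf_{j\to\infty}J_{\boldsymbol{\varphi}}(\vec I^{(j)})=\inf J_{\boldsymbol{\varphi}}$, so $\hat{\vec I}$ is the desired minimizer.

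I expect the main obstacle to be the weak lower semicontinuity step: in this genuinely nonlinear metric setting one cannot appeal to linearity of the composition operator, so the convexity route is essential, and it rests on the (fortunately clean) fact that precomposition with a fixed diffeomorphism carries pointwise geodesics to pointwise geodesics. A secondary technical point I would verify carefully is that componentwise weak convergence $I_k^{(j)}\rightharpoonup\hat I_k$ indeed yields weak convergence of the adjacent pairs $(I_{k-1}^{(j)},I_k^{(j)})$ in the product Hadamard space $L^2(\Omega,\HH)\times L^2(\Omega,\HH)$, which is what allows the jointly convex summands to be treated by weak lower semicontinuity.
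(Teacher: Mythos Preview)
Your proof is correct and follows essentially the same route as the paper: boundedness of the minimizing sequence via the triangle inequality and change of variables with $\det(D\varphi_k)\ge\epsilon$, extraction of a weakly convergent subsequence in the product Hadamard space, and weak lower semicontinuity via convexity and continuity of $J_{\boldsymbol{\varphi}}$ together with \cite[Lemma 3.2.3]{Bac14}. Where the paper simply asserts that $J_{\boldsymbol{\varphi}}$ is convex, you make explicit the key observation that precomposition with a fixed $\varphi_k$ carries pointwise geodesics to pointwise geodesics, which is exactly what is needed to transfer joint convexity of $\mathrm{d}_2$ to the individual summands.
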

	
	\begin{proof}	
		Let $\{ {\vec I}^{(j)} \}_{j\in\mathbb{N}}$ be a minimizing sequence of $J_{\boldsymbol{\varphi}}$,
		i.e., 
		$ \lim_{j\rightarrow \infty} J_{\boldsymbol{\varphi} } \bigl(\vec I^{(j)}\bigr)= \hat J$, 
		where $\hat J$ is the infimum of $J_{\boldsymbol{\varphi}}$.
		Clearly, there exists $C \ge 0$ such that $J_{\boldsymbol{\varphi}} ({\vec I}^{(j)}) \le C$ for all $j \in \mathbb N$.
		By the triangle inequality we obtain for some fixed $a\in\HH$ that
		\begin{equation*}
		\mathrm{d}_2(I_{k+1}^{(j)},a) 
		\le 
		\mathrm{d}_2(I_k^{(j)}\circ\varphi_{k+1},I_{k+1}^{(j)})+d_2(I_{k}^{(j)}\circ\varphi_{k+1},a) \le C +\mathrm{d}_2(I_{k}^{(j)}\circ\varphi_{k+1},a).
		\end{equation*}
		Thus, since $\det (D \varphi_k) \ge \epsilon $ a.e.~on $\Omega$ for $k=1,\ldots,K$,
		\begin{align*}
		 \mathrm{d}_2(I_1^{(j)},a) 
		 & \le C + \mathrm{d}_2(T \circ \varphi_1,a)\\
		 \mathrm{d}_2(I_2^{(j)},a) 
		 & \le C + \mathrm{d}_2( I_1^{(j)} \circ \varphi_2,a) \\
		 &= C + \left( \int_{\Omega} d^2 \left( I_1^{(j)} \left( \varphi_2(x) \right),a \right) \, \dx x \right)^\frac12\\
		 &= C + \left( \int_{\Omega} d^2 \left( I_1^{(j)} \left( x \right),a \right) \, \big\lvert\det \big(D \varphi_2(\varphi_2^{-1}(x))\big) \big\rvert^{-1} \dx x \right)^\frac12\\
		 &\le C +   \epsilon^{-\frac12} \mathrm{d}_2 (I_1^{(j)},a) \le C +   \epsilon^{-\frac12} \big(C + \mathrm{d}_2(T \circ \varphi_1,a)\big).
		\end{align*}
		Continuing this successively we see that 		
		$\{ \vec I^{(j)}\}_{j\in\mathbb{N}}$
		is bounded in $\left( L^2(\Omega,\HH) \right)^{K-1}$.
		From \cite[Proposition 3.1.2]{Bac14} we know that a bounded sequence in an Hadamard space 
		has a weakly convergent subsequence $\{ \vec I^{(j_k)}\}_{k\in\mathbb{N}}$.
		Let  $\hat{ \vec I} \in \left( L^2(\Omega,\HH) \right)^{K-1}$ be its weak limit point.
		Now $\mathrm{d}_2(\cdot,\cdot)$ is a continuous convex function and the same holds true for $J_{\boldsymbol{\varphi}}$.
		Then, by \cite[Lemma 3.2.3]{Bac14}, the function $J_{\boldsymbol{\varphi}}$ is weakly lsc
		which means that
		$$
		\hat J = \lim_{j\rightarrow \infty} J_{\boldsymbol{\varphi} } \bigl(\vec I^{(j)}\bigr)
		=
		\lim_{k \rightarrow \infty} J_{\boldsymbol{\varphi} }
				\bigl( \vec I^{(j_k)} \bigr) 
				\ge
		J_{\boldsymbol{\varphi} }\bigl(\hat {\vec I}\bigr),
		$$
		so that 
		$\hat{\vec I}$ is a minimizer of the functional.
		\end{proof}

\begin{theorem}\label{lem:uni:seq}
	For fixed $\boldsymbol{\varphi} \in \mathcal{A}_\epsilon^K$, there exists a 
	unique  sequence of intermediate images $\vec{I} \in \bigl(L^2(\Omega,\HH)\bigr)^{K-1}$ 
	minimizing $J_{\boldsymbol{\varphi}}$. 
\end{theorem}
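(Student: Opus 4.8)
The plan is to get existence for free from Lemma \ref{lem:ex:seq} and to concentrate all the work on \emph{uniqueness}. Since $(\HH,d)$ is Hadamard and $p=2$, the space $L^2(\Omega,\HH)$ is itself a Hadamard space, and hence so is the product $\bigl(L^2(\Omega,\HH)\bigr)^{K-1}$ with the $\ell^2$-product metric; in particular any two points are joined by a unique geodesic acting componentwise. I would therefore assume that $\vec I=(I_1,\dots,I_{K-1})$ and $\vec I'=(I_1',\dots,I_{K-1}')$ are both minimizers of $J_{\boldsymbol{\varphi}}$ with common minimal value $\hat J$, let $I_k^t\coloneqq\gamma_{\overset{\frown}{I_k,I_k'}}(t)$ denote the componentwise geodesics (with the fixed endpoints $I_0^t\equiv T$ and $I_K^t\equiv R$), and study the single-variable functions $T_k(t)\coloneqq \mathrm{d}_2^2(I_{k-1}^t\circ\varphi_k,I_k^t)$ along this geodesic.

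The first step is to record that each $T_k$ is convex. Composition with the fixed map $\varphi_k\in\mathcal{A}_\epsilon$ keeps the image in $L^2(\Omega,\HH)$ and sends the pointwise geodesic $I_{k-1}^t$ to the pointwise geodesic joining $I_{k-1}\circ\varphi_k$ and $I_{k-1}'\circ\varphi_k$; hence $T_k$ is the squared $\mathrm{d}_2$-distance between two geodesics evaluated at the same parameter, which is convex by the joint convexity \eqref{eq:jointconvexity} together with convexity of $s\mapsto s^2$. Since $J_{\boldsymbol{\varphi}}(\vec I^t)=\sum_{k=1}^K T_k(t)$ is a sum of convex functions with $\sum_k T_k(0)=\sum_k T_k(1)=\hat J$ while the left-hand side is bounded below by $\hat J$, the chord estimate $T_k(t)\le (1-t)T_k(0)+tT_k(1)$ must hold with equality for every $k$; that is, \emph{each} $T_k$ is affine on $[0,1]$.

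The decisive step then exploits that the squared distance to a \emph{fixed} point is strictly (strongly) convex. In the Hadamard space $L^2(\Omega,\HH)$ the CAT(0) inequality (a consequence of \eqref{eq:reshet0}, see \cite{Bac14}) reads
\begin{equation*}
\mathrm{d}_2^2\bigl(\gamma(t),w\bigr)\le(1-t)\,\mathrm{d}_2^2\bigl(\gamma(0),w\bigr)+t\,\mathrm{d}_2^2\bigl(\gamma(1),w\bigr)-t(1-t)\,\mathrm{d}_2^2\bigl(\gamma(0),\gamma(1)\bigr)
\end{equation*}
for any geodesic $\gamma$ and any fixed $w$. I would now argue inductively starting from the frozen frame $I_0=T$. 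For $k=1$ the first argument $I_0^t\circ\varphi_1=T\circ\varphi_1$ is constant, so the inequality applies with $w=T\circ\varphi_1$ and $\gamma=I_1^t$; comparing it with the affineness of $T_1$ forces $t(1-t)\,\mathrm{d}_2^2(I_1,I_1')\le0$, hence $I_1=I_1'$. Assuming $I_{k-1}=I_{k-1}'$, the geodesic $I_{k-1}^t$ is constant, so $I_{k-1}^t\circ\varphi_k$ is again a fixed point and the same comparison yields $I_k=I_k'$. Thus $\vec I=\vec I'$.

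The hard part is precisely this coupling: in a generic term $T_k$ \emph{both} arguments move, and joint convexity alone only gives a non-strict estimate, so one cannot conclude uniqueness termwise. The trick I would rely on is that the boundary datum $I_0=T$ is fixed, which lets the strict convexity of the distance-to-a-point propagate one index at a time; verifying that composition with $\varphi_k$ maps geodesics to geodesics and preserves $L^2(\Omega,\HH)$-membership (guaranteed by $\varphi_k\in\mathcal{A}_\epsilon$) is the only other point requiring care.
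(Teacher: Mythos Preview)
Your argument is correct and genuinely different from the paper's. You exploit the CAT(0) structure of $L^2(\Omega,\HH)$ directly: after observing that $t\mapsto I_{k-1}^t\circ\varphi_k$ is again an $L^2$-geodesic (which is right, since geodesics in $L^2(\Omega,\HH)$ act pointwise and precomposition by a fixed map just relabels the base point), you combine joint convexity with the optimality at both endpoints to force each $T_k$ to be affine, and then let the strong convexity of $\mathrm{d}_2^2(\cdot,w)$ propagate from the frozen frame $I_0=T$. This is clean, works for any Hadamard target (no manifold structure, exponential map, or Euler--Lagrange equation needed), and the inductive ``propagation from the fixed boundary'' step neatly circumvents the issue that a generic $T_k$ has both arguments moving.

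The paper instead proves uniqueness \emph{constructively}: via the regridding substitution $x=\psi_k(y)$ it rewrites $J_{\boldsymbol\varphi}$ as a weighted sum $\sum_k\int w_k\,d^2(F_k,F_{k-1})$, solves the Euler--Lagrange system $w_k\log_{F_k}F_{k-1}+w_{k+1}\log_{F_k}F_{k+1}=0$ explicitly, and identifies each $F_k$ as a specific point $\gamma_{\overset{\frown}{F_0,F_K}}(t_k)$ on the geodesic from $F_0$ to $F_K$, with $t_k$ given by a closed formula in the $w_i$. Uniqueness then follows from uniqueness of geodesics. What this buys is the explicit formula \eqref{getFF}: it is the backbone of Corollary~\ref{ConvergenceFK} (continuous dependence of the image sequence on $\boldsymbol\varphi$, needed for Theorem~\ref{main}) and of the numerical image-update step in Section~\ref{sec:d_model}. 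Your proof establishes the theorem as stated more elegantly, but would not by itself supply this formula; the paper's longer route is doing double duty.
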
	
		
		\begin{proof} 
		By Lemma \ref{lem:ex:seq}, there exists a minimizer of $J_{\boldsymbol{\varphi}}$.
		Setting 
\begin{align}
 \psi_K(x) &\coloneqq x,\notag\\
 \psi_{k}(x) &\coloneqq \varphi_{k+1} \circ \psi_{k+1} (x) = \varphi_{k+1} \circ \ldots \circ \varphi_{K}(x), \quad k=K-1,\ldots,0,
\label{def_psi}
\end{align}
and substituting $x \coloneqq \psi_k (y)$
in the $k$-th summand of $J_{\boldsymbol{\varphi}}$, we obtain
\begin{align*}
J_{\boldsymbol{\varphi}} (\vec I)
&= 
\sum_{k=1}^K 
\int_{\Omega} 
d ^2 \left( I_k \left( \psi_{k}(y) \right) , I_{k-1} \left( \varphi_k  \circ \psi_{k} (y)  \right) \right)\, 
|\det \left( D \psi_k(y) \right)| \, \dx  y \\
&= 
\sum_{k=1}^K \int_{\Omega} 
d^2 \left( I_k\left( \psi_{k}(x) \right) , I_{k-1} \left(\psi_{k-1} (x)  \right) \right)\, 
|\det \left( D \psi_k(x) \right)| \, \dx  x .
\end{align*}
Using $F_K \coloneqq I_K = R$,
$F_k \coloneqq I_k \circ \psi_{k}$, 
and
$w_k(x) \coloneqq  |\det \left( D \psi_k(x) \right) | > 0$, 
we are concerned with the minimization of
\begin{equation*}
\sum_{k=1}^K \int_{\Omega}  w_k(x) d^2\left( F_k (x),F_{k-1} (x) \right)  \, \dx  x
\quad \mbox{subject to} \quad F_0  = T \circ \psi_0, \; F_K = R.
\end{equation*}
	Each image $I_k$, resp. $F_k$  appears only in two summands 
		\begin{equation*}
		\int_{\Omega} w_k(x) d^2 \left( F_k (x),F_{k-1} (x) \right) + 
		w_{k+1}(x) d^2 \left (F_{k+1} ,F_k(x) \right)\dx x.
		\end{equation*}
		By the Euler-Lagrange equation and since the Riemannian gradient of the squared distance function is
		$\nabla d^2(\cdot,a) (b) = - 2 \log_b a$, we obtain 
		\begin{equation} \label{system}
		w_k \log_{ F_k} F_{k-1} + w_{k+1} \log _{F_k} F_{k+1} = 0, \quad k=1,\ldots,K-1.
		\end{equation}
		If $K=2$, we have only one equation
		\begin{equation*}
		w_1\log_{F_1}F_0+ w_2 \log_{F_1}F_2 = 0,
		\end{equation*}
		which implies that $F_1$ has to be on the geodesic $\gamma_{\overset{\frown}{F_0,F_2}}(t),t\in[0,1]$. 
		Since 
		$$
		\frac{|\log_{F_1} F_0|}{|\log_{F_1} F_2|} = \frac{w_2}{w_1}
		$$
		we obtain
		$$F_1 = \gamma_{\overset{\frown}{F_0,F_2}}\biggl(\frac{w_2}{w_1 + w_2}\biggr).$$
		For general $K \ge 3$, the system of equations \eqref{system} can only be fulfilled
		if three consecutive points always lie on a geodesic. This
		is only possible if all points are on the same geodesic $\gamma_{\overset{\frown}{F_0,F_K}}(t), t\in[0,1]$.
		More precisely, 
		\begin{equation} \label{getFF}
		F_k = \gamma_{\overset{\frown} {F_0,F_K}}(t_k), \quad k=1,\ldots,K-1,
		\end{equation}
		where by \eqref{system},
		the $t_k$ are related by
		\begin{equation}\label{eq:samp}
		\frac{s_k}{s_{k+1}} = \frac{w_{k+1}}{w_{k}}, \quad  s_k \coloneqq t_{k}-t_{k-1}, \quad k=1,\ldots,K-1,
		\end{equation}
		\begin{figure}
			\centering
			\includegraphics{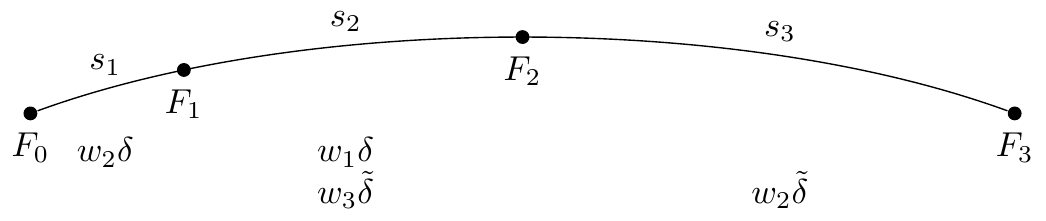}
			\caption{Illustration of relation~\eqref{eq:samp} for the geodesic~\eqref{getFF} with $K = 3$, where $\delta$, $\tilde{\delta}$ are constants canceling out in the fraction.}\label{path_rel}
		\end{figure}
		see Fig.~\ref{path_rel} for an illustration.
		It is easy to check that these conditions are fulfilled by
		$$
		s_k \coloneqq \frac{\alpha_k}{\sum_{k=1}^{K-1} \alpha_k} , \quad 
		\alpha_k \coloneqq  	\prod_{i=1 \atop {i\not = k}}^K w_i, \quad k=1,\ldots,K-1,
		$$
		so that
		\begin{equation*}
		t_k = \sum_{i=1}^k s_i = \frac{\sum_{i=1}^k w_i^{-1}}{\sum_{i=1}^K w_i^{-1}}.
		\end{equation*}
		The geodesics between $F_0$ and $F_K$ are unique, so that the resulting points $F_k$, $k =1,\dots,K-1$, are unique as well.
		As we know that $\varphi_k$ 
		are diffeomorphisms, the functions $\psi_k$, $k = 1,\dots, K-1,$ are diffeomorphisms as well.
		By \eqref{getFF}, convexity of $d^2$, and $F_0,R \in L^2(\Omega,\HH)$, we see that
		$F_k\in L^2(\Omega,\HH)$ and thus $I_k\in L^2(\Omega,\HH)$, $k=1,\ldots, K-1$.
	\end{proof}
	
	To prove the next theorem we need the following corollary.
	
	\begin{corollary}\label{ConvergenceFK}
		Let $\{ \boldsymbol{\varphi}^{(j)} \}_{j \in \mathbb{N}}$ with $\boldsymbol{\varphi}^{(j)} \in \mathcal{A}_\epsilon^{K}$
		be a sequence which converges in  $(C^{1,\alpha}(\overline \Omega) )^{nK}\!\!$.
		For each $j \in \mathbb{N}$, let $\vec{I}^{(j)} \in \left( L^2(\Omega,\HH) \right)^{K-1}$ be the minimizer of
		$J_{\boldsymbol{\varphi}^{(j)}}$. 
		Then the sequence $\{ \vec{I}^{(j)} \}_{j\in \mathbb{N}}$ converges in $\left( L^2(\Omega,\HH) \right)^{K-1}$.		
	\end{corollary}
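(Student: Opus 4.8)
The plan is to exploit the explicit description of the minimizer furnished by Theorem~\ref{lem:uni:seq} and to reduce everything to the continuity statement of Corollary~\ref{stet_norm} together with the joint convexity~\eqref{eq:jointconvexity} of the distance. Since $\boldsymbol{\varphi}^{(j)}$ converges in $(C^{1,\alpha}(\overline\Omega))^{nK}$, each component and its first derivatives converge uniformly; in particular the limit $\hat{\boldsymbol{\varphi}}$ satisfies $\det D\hat\varphi_k \ge \epsilon$ and the boundary condition, so $\hat{\boldsymbol{\varphi}} \in \mathcal{A}_\epsilon^K$ and Theorem~\ref{lem:uni:seq} provides a unique minimizer $\hat{\vec I}$ of $J_{\hat{\boldsymbol{\varphi}}}$. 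The candidate limit of $\{\vec I^{(j)}\}$ will be exactly $\hat{\vec I}$.

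First I would transfer the $C^{1,\alpha}$-convergence to the auxiliary objects of Theorem~\ref{lem:uni:seq}. By the chain rule the compositions $\psi_k^{(j)} = \varphi_{k+1}^{(j)}\circ\cdots\circ\varphi_K^{(j)}$ converge to $\hat\psi_k$ in $C^1(\overline\Omega)$, whence the weights $w_k^{(j)} = |\det D\psi_k^{(j)}|$ converge uniformly to $\hat w_k$ and are uniformly bounded, $\epsilon^{K} \le w_k^{(j)} \le C$. Consequently the interpolation parameters $t_k^{(j)} = \bigl(\sum_{i=1}^k (w_i^{(j)})^{-1}\bigr)/\bigl(\sum_{i=1}^K (w_i^{(j)})^{-1}\bigr)$ converge uniformly to $\hat t_k$. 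I would also record that the inverse maps converge: since $\|D\psi_k^{(j)}\|$ is uniformly bounded and $|\det D\psi_k^{(j)}| \ge \epsilon^K$, the adjugate formula shows the matrix inverses $(D\psi_k^{(j)})^{-1}$ are uniformly bounded, so the $(\psi_k^{(j)})^{-1}$ are uniformly Lipschitz; Arzelà--Ascoli together with $\psi_k^{(j)}\to\hat\psi_k$ then forces $(\psi_k^{(j)})^{-1}\to\hat\psi_k^{-1}$ uniformly on $\overline\Omega$.

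Next I would prove convergence of the ``straightened'' images $F_k^{(j)}$. For $F_0^{(j)} = T\circ\psi_0^{(j)}$ the uniform convergence $\psi_0^{(j)}\to\hat\psi_0$ and the bound $|\det D\psi_0^{(j)}|^{-1}\le \epsilon^{-K}$ put us exactly in the hypotheses of Corollary~\ref{stet_norm}, giving $\mathrm{d}_2(F_0^{(j)},\hat F_0)\to 0$. Writing $F_k^{(j)}(x) = \gamma_{\overset{\frown}{F_0^{(j)}(x),R(x)}}(t_k^{(j)}(x))$ and $\hat F_k(x) = \gamma_{\overset{\frown}{\hat F_0(x),R(x)}}(\hat t_k(x))$, the joint convexity~\eqref{eq:jointconvexity} (for the two geodesics evaluated at the common parameter $t_k^{(j)}$, with shared endpoint $R$) and the constant-speed property~\eqref{eq:geo} (for the single geodesic $\gamma_{\overset{\frown}{\hat F_0,R}}$ at the parameters $t_k^{(j)}$ and $\hat t_k$) yield the pointwise bound
\begin{equation*}
d\bigl(F_k^{(j)}(x),\hat F_k(x)\bigr) \le d\bigl(F_0^{(j)}(x),\hat F_0(x)\bigr) + |t_k^{(j)}(x)-\hat t_k(x)|\, d\bigl(\hat F_0(x),R(x)\bigr).
\end{equation*}
Squaring, integrating over $\Omega$, and using $\mathrm{d}_2(F_0^{(j)},\hat F_0)\to 0$, the uniform convergence $t_k^{(j)}\to\hat t_k$, and $\hat F_0,R\in L^2(\Omega,\HH)$, I obtain $\mathrm{d}_2(F_k^{(j)},\hat F_k)\to 0$.

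Finally I would return from $F_k^{(j)}$ to $I_k^{(j)} = F_k^{(j)}\circ(\psi_k^{(j)})^{-1}$. Splitting
\begin{equation*}
\mathrm{d}_2(I_k^{(j)},\hat I_k) \le \mathrm{d}_2\bigl(F_k^{(j)}\circ(\psi_k^{(j)})^{-1},\hat F_k\circ(\psi_k^{(j)})^{-1}\bigr) + \mathrm{d}_2\bigl(\hat F_k\circ(\psi_k^{(j)})^{-1},\hat F_k\circ\hat\psi_k^{-1}\bigr),
\end{equation*}
the first term is handled by the change-of-variables formula, which turns it into $\int_\Omega d^2(F_k^{(j)},\hat F_k)\,w_k^{(j)}\,\dx x \le \|w_k^{(j)}\|_{\infty}\,\mathrm{d}_2^2(F_k^{(j)},\hat F_k)\to 0$, while the second term is exactly of the form covered by Corollary~\ref{stet_norm}, applied to the fixed function $\hat F_k$ and the uniformly converging diffeomorphisms $(\psi_k^{(j)})^{-1}\to\hat\psi_k^{-1}$ whose inverse Jacobian $|\det D(\psi_k^{(j)})^{-1}|^{-1} = w_k^{(j)}\circ(\psi_k^{(j)})^{-1}\le \|w_k^{(j)}\|_\infty$ is uniformly bounded. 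This proves $\vec I^{(j)}\to\hat{\vec I}$ in $(L^2(\Omega,\HH))^{K-1}$. I expect the main obstacle to be the uniform convergence of the inverse diffeomorphisms, and, relatedly, the simultaneous handling in the last step of the two independent perturbations --- the $\HH$-valued functions $F_k^{(j)}$ and the reparametrizing diffeomorphisms $(\psi_k^{(j)})^{-1}$ --- which is precisely why both the change of variables and Corollary~\ref{stet_norm} are needed there.
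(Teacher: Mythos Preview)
Your proposal is correct and follows essentially the same route as the paper: pass to the straightened images $F_k^{(j)}$ via the compositions $\psi_k^{(j)}$, show $F_0^{(j)}\to\hat F_0$ by Corollary~\ref{stet_norm}, control $F_k^{(j)}\to\hat F_k$ by joint convexity~\eqref{eq:jointconvexity} plus the constant-speed property~\eqref{eq:geo}, and then undo the reparametrization. The only cosmetic differences are that you note the \emph{uniform} convergence $t_k^{(j)}\to\hat t_k$ (which is correct and lets you avoid the dominated-convergence argument the paper uses), and in the last step you apply Corollary~\ref{stet_norm} directly to $(\psi_k^{(j)})^{-1}\to\hat\psi_k^{-1}$, whereas the paper first changes variables once more and applies it to $\hat\psi_k^{-1}\circ\psi_k^{(j)}\to\mathrm{id}$; both are valid.
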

	
	\begin{proof} The proof follows the path of the previous theorem.
		Similarly as in \eqref{def_psi}, we define 
		\begin{align*}
		\psi_K(x) \coloneqq x, \quad
		\psi_k^{(j)} \coloneqq \varphi_{k+1}^{(j)} \circ \psi_{k+1}^{(j)}, \quad
		F_k^{(j)} \coloneqq I_k^{(j)} \circ \psi_k^{(j)}, \quad k=0,\ldots,K-1,
		\end{align*}
		where $I_0^{(j)} = T$ and $F_K^{(j)} = R$.
		Clearly, the convergence of $\{ \boldsymbol{\varphi}^{(j)} \}_{j\in\NN}$ in $\left(C^{1,\alpha}(\overline{\Omega})\right)^n$
		implies the convergence of $\{\psi^{(j)}_k\}_{j\in\NN}$ to $\hat \psi_k$ in $\left(C^{1,\alpha}(\overline{\Omega})\right)^n$
		for all $k=0,\ldots,K-1$.
		Hence, for all  $k=0,\ldots,K-1$,
		\[ w_k^{(j)} (x)  \coloneqq \bigl\lvert\det (D\psi_k^{(j)}(x)\bigr)\bigr\rvert\]
		is uniformly convergent on $\overline{\Omega}$. By construction, the $w_k^{(j)} \ge \tilde \epsilon > 0$ such that
		$$
		t_k ^{(j)} (x) \coloneqq \frac{\sum_{i=1}^k \left(w_i^{(j)} \right)^{-1}}{\sum_{i=1}^K \left(w_i^{(j)} \right)^{-1}}
		$$
		converges pointwise on $\overline{\Omega}$ as $j\rightarrow \infty$.
		We denote the limit by $\hat t_k$ and set
		\begin{align*}
		\hat F_0(x) &\coloneqq (T \circ \hat \psi_0)(x),\\
		\hat F_k(x) &\coloneqq \gamma_{\overset{\frown}{\hat F_0(x),R(x)}} \left(\hat t_k(x)\right), \quad k=1,\ldots,K-1.
		\end{align*}
		Recall that as in the proof of the previous theorem
		$F_k^{(j)}(x) = \gamma_{\overset{\frown}{\hat F_0^{(j)} (x),R(x)}} \left(t_k^{(j)}  (x) \right)$.
		Using Corollary \ref{stet_norm}, we see that 
		$F_0^{(j)} = T \circ \psi_0^{(j)}$ converges in 
		$L^2(\Omega,\HH)$ to $\hat F_0 = T \circ \hat \psi_0$.
		For $k=1,\dots,K-1$, we obtain
			\begin{align*}
			\mathrm{d}_2(F_k^{(j)}, \hat F_k) 
			\le& 
			\;  \mathrm{d}_2 \Big( F_k^{(j)}, \gamma_{\overset{\frown}{\hat F_0(x),R(x)}} \left( t_k^{(j)} (x) \right) \Big)
			+   \mathrm{d}_2 \Big( \gamma_{\overset{\frown}{\hat F_0(x),R(x)}} \left( t_k^{(j)}  (x) \right), \hat F_k \Big)
			\\
			=& \; 
			\left( 
			\int_{\Omega} d^2 
			\Big( 
			\gamma_{\overset{\frown}{F_0^{(j)}(x),R(x)}} \left( t_k^{(j)} (x)  \right), 
			\gamma_{\overset{\frown}{\hat F_0(x),R(x)}} \left(t_k^{(j)}  (x) \right) 
			\Big )\dx x
			\right)^\frac12
			 \\
			&+
			\left( \int_{\Omega} d^2
			\Big(\gamma_{\overset{\frown}{\hat F_0(x),R(x)}} \left(t_k^{(j)}  (x) \right), 
			\gamma_{\overset{\frown}{\hat F_0(x),R(x)}} \left(\hat t_k  (x) \right)
			\Big) \dx x\right)^\frac12. 
			\end{align*}
		Since the distance $d$ is jointly convex, we estimate	by \eqref{eq:jointconvexity},
		$$
		d^2 
			\Big( 
			\gamma_{\overset{\frown}{\hat F_0^{(j)}(x),R(x)}} \left( t_k^{(j)} (x)  \right), 
			\gamma_{\overset{\frown}{\hat F_0(x),R(x)}} \left(t_k^{(j)}  (x) \right) 
			\Big ) 
			\le
			\left( 1-  t_k^{(j)}(x)\right)^2  d^2 \left( F_0^{(j)}(x),\hat F_0(x) \right). 
		$$
		Using this inequality, property \eqref{eq:geo} of the geodesic and that by definition $0 < w_k^{(j)} < 1$ we conclude
		\begin{align*}
		\mathrm{d}_2 (F_k^{(j)}, \hat F_k) 
			\le& 
			\left( \int_{\Omega} 
			d^2 \left( F_0^{(j)}(x),\hat F_0(x) \right) 
			\left( 1-  t_k^{(j)}(x)\right)^2  \dx x
			\right)^\frac12\\
			& + 
			\left( \int_{\Omega} 
			d^2 \left(\hat F_0(x), R(x) \right) 
			\lvert  t_k^{(j)}(x) - \hat t_k(x) \rvert^2  \dx x
			\right)^\frac12\\
			&\le 
			\left( \int_{\Omega} 
			d^2 \bigl(F_0^{(j)}(x),\hat F_0(x) \bigr) \dx x\right)^\frac12 \\
			&+  
			\left( \int_{\Omega} 
			d^2 \left(\hat F_0(x), R(x) \right) 
			\lvert  t_k^{(j)}(x) - \hat t_k(x) \rvert^2  \dx x
			\right)^\frac12.
		\end{align*}
			Since the second factor in the last integral converges pointwise to zero, as 
			$w^{(j)}_k\to\hat w_k$ in $C^{\alpha}(\overline{\Omega})$, and 
			$d^2 \bigl(\hat F_0(x), R(x)\bigr)\lvert  t_k^{(j)}(x) - \hat t_k(x) \rvert^2$ 
			has the integrable upper bound
			$4d^2 \bigl(\hat F_0(x), R(x)\bigr)$, we conclude by the convergence theorem of Lebesgue that $F_k^{(j)}$ converges in $L^2(\Omega,\HH)$ to $\hat F_k$, $k=1,\ldots,K-1$.
						
			Finally, setting $\hat I_k \coloneqq \hat F_k \circ (\hat \psi_k)^{-1}$, $k=1,\ldots,K-1$, we obtain
			\begin{align*}
			 \mathrm{d}_2 \big( I^{(j)}_k, \hat I_k \big)
			 \le& \,
			 \mathrm{d}_2 \big( I^{(j)}_k, \hat F_k \circ  (\psi^{(j)}_k )^{-1} \big) 
			 +
			  \mathrm{d}_2 \big( \hat F_k \circ  (\psi^{(j)}_k )^{-1}, \hat F_k \circ (\hat \psi_k)^{-1} \big)
				\end{align*}
			and since 	$\{\psi^{(j)}_k\}_j{j\in\NN}$ converges in $\left(C^{1,\alpha}(\overline{\Omega})\right)^n$ further
			\begin{align*}
			 \mathrm{d}_2 \big( I^{(j)}_k, \hat I_k \big)
			   \le& \,C(\epsilon) \Bigl(
			  \mathrm{d}_2 \big( F_k^{(j)},\hat F_k) 
			  +  \mathrm{d}_2 \big( \hat F_k, \hat F_k \circ (\hat \psi_k)^{-1}\circ  (\psi^{(j)}_k ) \big)\Bigr).
			\end{align*}
			Since $F_k^{(j)}$ converges in $L^2(\Omega, \HH)$ to $\hat F_k$ and $\psi^{(j)}_k$ converges uniformly to 
			$\hat \psi_k$ as $j \rightarrow \infty$
			we obtain together with Corollary \ref{stet_norm} that the right-hand side becomes arbitrary small for $j$ large enough.
			This finishes the proof.
	\end{proof}
	
	Up to now we have shown that for a given image sequence
	$I_0 = T, I_1,\dots,I_{K-1},$ $I_K = R \in L^2(\Omega,\HH)$ the problem
	\begin{equation*}
	\min_{\boldsymbol{\varphi} \in \mathcal{A}_\epsilon^K} \mathcal J(\vec I,\boldsymbol{\varphi})	.
	\end{equation*} 
	has a minimizer and that
	for given $\boldsymbol{\varphi} \in \mathcal{A}_\epsilon^K$, the problem
	\begin{equation*}
	\min_{\vec{I}\in \left( L^2(\Omega,\HH) \right)^{K-1}} \mathcal{J}(\vec I,\boldsymbol \varphi) 
	\quad \mbox{subject to } \quad I_0 = T, \; I_K = R
	\end{equation*}
	has a unique solution.
	Using these two results we can prove that a minimizer of $\boldsymbol{\mathcal{J}}$ in \eqref{d_path} exists.
	
	\begin{theorem}\label{main}
		Let $T,R\in L^2(\Omega,\HH)$ and $K\ge2$. 
		Then there exists a sequence $\hat{\vec I}\in L^2(\Omega,\HH)^{K-1}$ 
		minimizing $\boldsymbol{ \mathcal{J} }$.	
	\end{theorem}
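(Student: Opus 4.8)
The plan is to run one image-minimization half-step and then extract limits, with Corollary~\ref{ConvergenceFK} supplying the decisive compactness for the image part. First I would take a minimizing sequence $\{\vec I^{(j)}\}_{j\in\NN}$ for $\boldsymbol{\mathcal J}$; its infimum is finite, since inserting the identities $\varphi_k=\mathrm{id}$ for any fixed admissible $\vec I$ gives a finite value. For each $j$, Corollary~\ref{lem:ex:phi_eps} furnishes $\boldsymbol{\varphi}^{(j)}\in\mathcal{A}_\epsilon^K$ attaining the inner infimum, so that $\boldsymbol{\mathcal J}(\vec I^{(j)})=\mathcal J(\vec I^{(j)},\boldsymbol{\varphi}^{(j)})$. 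I would then replace $\vec I^{(j)}$ by the unique minimizer of $J_{\boldsymbol{\varphi}^{(j)}}$ from Theorem~\ref{lem:uni:seq}: this keeps the regularizing terms fixed, cannot increase the data term, and hence keeps the sequence minimizing for $\boldsymbol{\mathcal J}$ while ensuring that $\vec I^{(j)}$ is precisely the image-minimizer associated with $\boldsymbol{\varphi}^{(j)}$.

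Next I would establish compactness of the deformations. Since $\boldsymbol{\mathcal J}(\vec I^{(j)})$ is bounded and $\mathrm d_2^2\ge0$, the energies $\sum_{k}\int_\Omega W(D\varphi_k^{(j)})+\gamma|D^m\varphi_k^{(j)}|^2\dx x$ are bounded, so each $\{\varphi_k^{(j)}\}_j$ has bounded $(W^{m,2}(\Omega))^n$ seminorm and, by~\eqref{sieben}, bounded $L^2$ norm. Arguing exactly as in Step~1 of Theorem~\ref{lem:ex:phi} (Gagliardo--Nirenberg, reflexivity of $W^{m,2}$, and the compact embedding into $C^{1,\alpha}(\overline\Omega)$), I extract a subsequence, not relabelled, with $\boldsymbol{\varphi}^{(j)}\to\hat{\boldsymbol{\varphi}}$ in $(C^{1,\alpha}(\overline\Omega))^{nK}$ and $\varphi_k^{(j)}\rightharpoonup\hat\varphi_k$ in $(W^{m,2}(\Omega))^n$. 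Uniform convergence of $D\varphi_k^{(j)}$ passes the constraints $\det D\hat\varphi_k\ge\epsilon$ and $\hat\varphi_k|_{\partial\Omega}=\mathrm{id}$ to the limit, whence $\hat{\boldsymbol{\varphi}}\in\mathcal{A}_\epsilon^K$. Because $\vec I^{(j)}$ minimizes $J_{\boldsymbol{\varphi}^{(j)}}$ and $\boldsymbol{\varphi}^{(j)}$ converges in $(C^{1,\alpha}(\overline\Omega))^{nK}$, Corollary~\ref{ConvergenceFK} gives $\vec I^{(j)}\to\hat{\vec I}$ in $(L^2(\Omega,\HH))^{K-1}$ for some $\hat{\vec I}$.

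The final step is to compare energies at $(\hat{\vec I},\hat{\boldsymbol{\varphi}})$. For the regularizer I would use that $W$ is nonnegative and lsc, so Fatou gives $\int_\Omega W(D\hat\varphi_k)\le\liminf_j\int_\Omega W(D\varphi_k^{(j)})$, while $\varphi\mapsto\int_\Omega|D^m\varphi|^2\dx x$ is convex and strongly continuous, hence weakly lsc on $W^{m,2}$; together the regularizing part is lower semicontinuous along the subsequence. For the data term I would prove the stronger statement $J_{\boldsymbol{\varphi}^{(j)}}(\vec I^{(j)})\to J_{\hat{\boldsymbol{\varphi}}}(\hat{\vec I})$: inserting the intermediate composition $\hat I_{k-1}\circ\varphi_k^{(j)}$, the change-of-variables estimate $\mathrm d_2(f\circ\varphi_k^{(j)},g\circ\varphi_k^{(j)})\le\epsilon^{-1/2}\mathrm d_2(f,g)$ controls $\mathrm d_2(I_{k-1}^{(j)}\circ\varphi_k^{(j)},\hat I_{k-1}\circ\varphi_k^{(j)})\to0$, Corollary~\ref{stet_norm} controls $\mathrm d_2(\hat I_{k-1}\circ\varphi_k^{(j)},\hat I_{k-1}\circ\hat\varphi_k)\to0$, and continuity of $\mathrm d_2$ yields convergence of each summand. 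Combining the two facts gives
$$
\boldsymbol{\mathcal J}(\hat{\vec I})\le\mathcal J(\hat{\vec I},\hat{\boldsymbol{\varphi}})\le\liminf_j\mathcal J(\vec I^{(j)},\boldsymbol{\varphi}^{(j)})=\lim_j\boldsymbol{\mathcal J}(\vec I^{(j)})=\inf\boldsymbol{\mathcal J},
$$
so $\hat{\vec I}$ minimizes $\boldsymbol{\mathcal J}$. I expect the data-term limit to be the main obstacle, since there both the image and the deformation vary simultaneously inside the composition $I_{k-1}^{(j)}\circ\varphi_k^{(j)}$; it is precisely the determinant lower bound $\epsilon$ (through the change-of-variables estimate) together with Corollaries~\ref{stet_norm} and~\ref{ConvergenceFK} that make this passage to the limit possible.
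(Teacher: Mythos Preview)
Your proposal is correct and follows essentially the same strategy as the paper: pick optimal deformations $\boldsymbol{\varphi}^{(j)}$ for a minimizing image sequence, replace the images by the unique $J_{\boldsymbol{\varphi}^{(j)}}$-minimizers, extract a $(C^{1,\alpha})^{nK}$-convergent subsequence of the deformations via the $W^{m,2}$ bounds, invoke Corollary~\ref{ConvergenceFK} for convergence of the images, and pass to the limit using lower semicontinuity of the regularizer and continuity of the data term. The only cosmetic difference is that you replace the images before extracting the deformation subsequence (the paper does it the other way round), and you spell out in more detail the argument behind \eqref{dreieck} and the fact that $\hat{\boldsymbol{\varphi}}\in\mathcal{A}_\epsilon^K$; both are harmless elaborations of the same proof.
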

	
	\begin{proof}
		Let 
		$\bigl\{ \vec{I}^{(j)}\bigr\}_{j\in\mathbb{N}}$, $\vec{I}^{(j)} \in L^2(\Omega,\HH)^{K-1}$
		be a minimizing sequence of  $\boldsymbol{\mathcal{J}}$. 
		Then $\boldsymbol{ \mathcal{J}}(\vec{I}^{(j)}) \le C$ for all $j\in\mathbb{N}$.
		By Corollary \ref{lem:ex:phi_eps}, we find for each $\vec{I}^{(j)}$ 
		a sequence of diffeomorphisms $\boldsymbol{\varphi}^{(j)}$ 
		such that
		$$
		\mathcal{J}\bigl(\vec{I}^{(j)} ,\boldsymbol{\varphi}^{(j)}\bigr) 
		\le 
		\mathcal{J} \bigl(\vec{I}^{(j)},\boldsymbol{\varphi}\bigr) 
		$$
		for all $\boldsymbol{\varphi}\in \mathcal{A}_\epsilon^K$.
		We know that $\lVert D^m \varphi_k^{(j)}\rVert_{L^2(\Omega)}^2<\frac{1}{\gamma}C$ 
		for all $j\in\mathbb{N}$ and $k = 1,\dots,K$. 
		As in the first part of the proof of Theorem \ref{lem:ex:phi}
		we conclude that \{ $\varphi_k^{(j)} \}_{j\in\NN}$ 
		is bounded in 
		$\left(W^{m,2}(\Omega)\right)^n$,
		so that there exists a subsequence 
		$\{ \varphi_k^{(j_l)} \}_{l\in\NN}$ 
		converging weakly in $\left(W^{m,2}(\Omega)\right)^n$ 
		and strongly in $\left( C^{1,\alpha}(\overline{\Omega}) \right)^n$,  $0 <\alpha < m-1-\frac{n}{2}$,
		to $\hat{\varphi}_k$. 
		Let us denote the whole subsequence again by $\{ \boldsymbol{\varphi}^{(j)} \}_{j\in\NN}$.
		
		Using Lemma \ref{lem:ex:seq} we can replace $\{ \vec{I}^{(j)} \}_{j\in\mathbb{N}}$
		by the image sequence $\{ \hat{\vec{I}}^{(j)} \}_{j\in\mathbb{N}}$,  $\hat{\vec{I}}^{(j)} \in L^2(\Omega,\HH)^{K-1}$ 
		minimizing $J_{\boldsymbol{\varphi}^{(j)}}$
		so that the energy ${\boldsymbol {\mathcal J}}$ does not increase. 
		By Corollary \ref{ConvergenceFK},
		we know that the sequence $\{ \hat{\vec{I}}^{(j)} \}_{j\in\mathbb{N}}$  converges in $(L^2(\Omega,\HH))^{K-1}$
		to $\hat {\vec I}$. 
		Thus,
		\begin{align}\label{dreieck}
		\lim_{j\to\infty}\sum_{k=1}^{K} \mathrm{d}_2^2 \Bigl(\hat{I}_{k-1}^{(j)} \circ \varphi_k^{(j)},\hat{I}_{k}^{(j)}\Bigr)
		&=	
		\sum_{k=1}^{K}\mathrm{d}_2^2 (\hat I_{k-1}\circ \hat{\varphi}_k,\hat I_{k}).			
		\end{align}
		By definition of $\boldsymbol{\mathcal{J}}$ we have
		\begin{align*}
		\boldsymbol{\mathcal{J}}(\hat{\vec I}) 
		&\le \mathcal{J} (\hat{\vec I},\hat{\boldsymbol{\varphi}}).
		\end{align*}
		Further by \eqref{dreieck}, as $W$ is lsc (see Part 4 of the proof of Theorem \ref{lem:ex:phi})
		and by construction of $\hat {\vec I}^{(j)}$, we obtain
		\begin{align*}
		\boldsymbol{\mathcal{J}}(\hat{\vec I}) &\le
		\liminf_{j\to\infty} \mathcal{J} ( \hat{\vec I}^{(j)}, \boldsymbol{\varphi}^{(j)} )
		\le 
		\liminf_{j\to\infty} \mathcal{J} ( \vec I^{(j)}, \boldsymbol{\varphi}^{(j)} )
		= 
		\liminf_{j\to\infty} \boldsymbol{\mathcal{J}} (\vec I^{(j)}).
		\end{align*}
		Thus, $\hat{\vec I}$ is a minimizer of $\boldsymbol{\mathcal{J}}$.
	\end{proof}

	\subsection{Model Specification}\label{sec:special_model}
	To find an image sequence $\vec I = (I_1,\ldots,I_{K-1})$  minimizing $\boldsymbol{\mathcal{J}}$, 
	we use an alternating minimization scheme.
	Starting with  $\vec I^{(0)}$ the
	deformation sequence $\boldsymbol{ \varphi}^{(j)}$ is computed by
	\begin{align}	
	\notag
	\boldsymbol{\varphi}^{(j)} 
	& \coloneqq \argmin_{\boldsymbol{\varphi}\in\diffeo_{\epsilon}^K} \mathcal{J}(\vec{I}^{(j-1)},\boldsymbol{\varphi})\\
	& =  \sum_{k=1}^K \argmin_{\varphi_k\in\diffeo_{\epsilon}} {\mathcal R}(\varphi_k; I_{k-1}^{(j-1)},I_k^{(j-1)})\label{non-couple}.
	\end{align}
	To obtain the image sequence $\vec I^{(j)}$ we compute
	\begin{align}
	\notag \vec I^{(j)} 
	& \coloneqq \argmin_{\vec I \in L^2(\Omega,\HH)^{K-1}} \mathcal{J}(\vec{I},\boldsymbol{\varphi}^{(j)})\\
	&= \argmin_{\vec I \in L^2(\Omega,\HH)^{K-1}} \sum_{k = 1}^{K}  d_2^2(I_{k-1}\circ \varphi_{k},I_{k}) \quad \mbox{subject to} \quad I_0 = T, \; I_K = R.
	\label{couple}
	\end{align}
	While in \eqref{non-couple} each of the $K$ minimization problems can be tackled separately, \eqref{couple} is a coupled system
	which can be solved using the approach in the proof of Theorem \ref{lem:uni:seq}.
	
	For computations, we  have to specify $W$ in ${\mathcal R} (\varphi) = {\mathcal R} (\varphi;T,R)$.
	Here we propose to use the linearized elastic potential, see, e.g., \cite[p. 99]{Mod2004}, 
	\begin{equation} \label{eq_spez_W}
	W(D\varphi) \coloneqq \mu \trace \left( (D\varphi_{\operatorname{sym}} -\mathbbm 1)^2 \right)
	+
	\frac{\lambda}{2}\trace \left((D\varphi_{\operatorname{sym}}-\mathbbm 1)\right)^2,
	\end{equation}
	where  $\mathbbm 1$ denotes the identity matrix.
	For this choice, we can reformulate the problem as finding a displacement vector field $v = (v_1,v_2)^\tT \colon \Omega\mapsto\RR^2$ 
	minimizing 
	\begin{align}
	\mathcal{R}(v) 
	\coloneqq
	\mathcal{S}(v) + \int_{\Omega} \gamma \lvert D^m v\rvert^2 + d^2 \bigr( T\left(x-v(x)\right),R(x) \bigl) \dx x, \label{eq:regu}
	\end{align}
	where 
	\begin{align}
	\mathcal{S}(v) &\coloneqq \int_{\Omega} \mu \trace \left( Dv_{\operatorname{sym}}^\tT \, Dv_{\operatorname{sym}} \right)
	+
	\frac{\lambda}{2}\trace \left( Dv_{\operatorname{sym}} \right)^2 + \eta  \lVert v\rVert_2^2 \, \dx x\label{eq:pot}.
	\end{align}
	For $\Omega \subset \mathbb R^2$, we have for example
	$$
	Dv_{\operatorname{sym}} = 
\begin{pmatrix} 
\partial_x v_1 & \frac12 (\partial_y v_1 + \partial_x v_2)\\
\frac12 (\partial_y v_1 + \partial_x v_2) & \partial_y v_2
\end{pmatrix},
$$
which is also known as the (Cauchy) strain tensor of the displacement $v$.
	Since the meaning is clear from the context, we use the same notation ${\mathcal R}$ when addressing $\varphi$ and $v$.
	To get the deformation we set $\varphi(x) \coloneqq x-v(x)$.

	The term \eqref{eq:regu} is a usual regularizer in the context of registration, see \cite{HM06,Mod2004,PPS17}. 
	However, after solving $K$ registration problems separately, 
	the resulting deformations are coupled in the subsequent step
	to find an optimal image sequence for these deformations.
	Using $\eta>0$ in~\eqref{eq:pot}
	together with some conditions on the distance of subsequent images $I_k$ we can handle
	the condition $\det(D\varphi)\ge\epsilon$ as the following remark shows.
	\begin{remark}\label{rem:feasible}
		1. Assume $\gamma,\eta>0$. Clearly, it holds 
		\begin{equation}\label{eq:rem1}
			\min_{\varphi\in\diffeo_{\epsilon}} {\mathcal R}(\varphi; T, R) \leq {\mathcal R}(\operatorname{id}; T, R) = \mathrm{d}^2_2(T,R).
		\end{equation}
	In a similar way as in the proof of Theorem \ref{lem:ex:phi}, 
	we can use the Gagliardo-Nirenberg inequality  together with the compact embedding of $W^{m,2}(\Omega)$ into $C^{1,\alpha}(\overline \Omega)$ 
	to conclude
		\begin{equation*}
		\Vert \varphi - \operatorname{id} \Vert_{C^{1,\alpha}(\overline \Omega)} 
		\leq C 
		\Vert \varphi - \operatorname{id} \Vert_{W^{m,2}(\Omega)} 
		\leq C \big(\Vert \varphi - \operatorname{id} \Vert_{L^2(\Omega)} + \Vert D^m( \varphi - \operatorname{id} )\Vert_{L^{2}(\Omega)} \big).
		\end{equation*}
		Thus, with the regularization~\eqref{eq:pot},
		the optimal solution $\hat \varphi$ of \eqref{eq:rem1} must fulfill
		\begin{equation*}
		\Vert \hat \varphi - \operatorname{id} \Vert_{C^{1,\alpha}(\overline \Omega)} \leq C \mathrm{d}_2(T,R),
		\end{equation*}
		where the constant $C$ is independent of $T$ and $R$. 
		Then we obtain that 
		$\Vert D \hat \varphi - \mathbbm{1} \Vert_{(L^\infty(\Omega))^{n,n}} \leq C \mathrm{d}_2(T,R)$. 
		If $\mathrm{d}_2(T,R) \leq C_{det}$ is sufficiently small, this implies together with the continuity of the determinant that 
		$\vert \det (D \hat \varphi)\vert \geq \epsilon$ .
		\\
		2.
		Now the argument needs to be extended to the whole problem
		\begin{equation*}
			\min_{\boldsymbol{\varphi}\in\diffeo_{\epsilon}^K, \vec I \in L^2(\Omega,\HH)^{K-1}} \sum_{k=1}^{K} {\mathcal R}(\varphi_k; I_{k-1}, I_{k})
		\end{equation*}
		with given template image \[I_0 = T\] and reference image \[I_K = R.\]
		 For this problem we use the initialization 
		$\tilde I_k = \gamma_{\overset{\frown}{T,R}}(\frac{k}{K})$ with the geodesic $\gamma_{\overset{\frown}{T,R}}$. 
		We conclude 
		\begin{equation*}
		\min_{\boldsymbol{\varphi}\in\diffeo_{\epsilon}^K, \vec I \in L^2(\Omega,\HH)^{K-1}} \sum_{k=1}^{K} {\mathcal R}(\varphi_k; I_{k-1}, I_{k})
		\leq \sum_{k=1}^{K} {\mathcal R}(\operatorname{id}; \tilde I_{k-1}, \tilde I_{k})
		= \frac{1}{K} d_2^2(T, R).
		\end{equation*}
		For every summand it holds ${\mathcal R}(\varphi_k; I_{k-1}, I_{k}) \leq \frac{1}{K} d_2^2(T,R)$ 
		which is smaller than $C_{det}$ 
		if $\mathrm{d}_2(T,R) \leq C_{det} \sqrt{K}$.
		This shows that for the optimal deformation it holds $\lvert \det(D \hat \varphi_k )\rvert \geq \epsilon$ if we use enough images in between.
	\end{remark}
	%
	\section{Minimization of the Space Discrete Model} \label{sec:d_model}
	In practice, we have to work in a spatially discrete setting.
	Dealing with digital images, we propose a finite difference model.
	We have already used such a model as basis of a face colorization method in \cite{PPS17}.
	In the rest of this paper, we restrict our attention to two-dimensional images 
	$  T , R \colon\grid\to\HH$ defined on the (primal) image grid 
	$\grid\coloneqq \{1,\ldots,n_1\} \times \{1,\ldots,n_2\} $. 
	We discretize the integrals appearing in our space continuous functionals 
	on the integration domain 
	$\overline \Omega\coloneqq [\tfrac12,n_1 + \tfrac12] \times [\tfrac12,n_2 + \tfrac12]$
	by the midpoint quadrature rule, i.e., with pixel values defined on $\grid$.

	\begin{figure}[t]
		\centering
		\begin{TikZOrPDF}{grid}
				\newcommand\ly{5}
				\newcommand\lx{7}
				\begin{tikzpicture}[scale=1.7]	
				\draw[step=1,xshift = 0.5cm,yshift = 0.5cm,very thin] (0,0) grid +(\lx,\ly);
				\draw[very thick,xshift = 0.5cm,yshift = 0.5cm] (0,0) rectangle (\lx,\ly);	
				\draw (1,\ly) node[anchor=north] {$(1,\ly)$};  
				\draw (1,1) node[anchor=north] {$(1,1)$};
				\draw (\lx,\ly) node[anchor=north] {$(\lx,\ly)$};
				\draw (\lx,1) node[anchor=north] {$(\lx,1)$};

				\foreach \x in {1,...,\lx}	{ \foreach \y in {1,...,\ly}  {  			
						\draw[fill] (\x,\y) circle (0.02); }  }
				
				\foreach \x in {1,...,\the\numexpr\lx-1\relax}	{ \foreach \y in {1,...,\ly}  {  		
						\draw[xshift = 0.5cm,\ifnum\x<2
						thin\else%
						\ifnum\x>\the\numexpr\lx-2\relax%
						thin\else fill%
						\fi%
						\fi] (\x-0.04,\y-0.04) rectangle ++(0.08,0.08); 			
						\draw[xshift = 0.5cm,->] (\x,\y)-- ++(0.15,0); }}
				
				\foreach \x in {1,...,\lx}	{ \foreach \y in {1,...,\the\numexpr\ly-1\relax}  {
						\draw[yshift = 0.5cm,\ifnum\y<2
						thin\else%
						\ifnum\y>\the\numexpr\ly-2\relax%
						thin\else fill%
						\fi%
						\fi] (\x-0.04,\y-0.04) rectangle ++(0.08,0.08); 
						\draw[yshift = 0.5cm,->] (\x,\y)-- ++(0,0.15); }}
				
				\begin{customlegend}[anchor=north,legend cell align=left,
				legend entries={ 
					Grid points of $\mathcal{G}$,
					Grid points of $\mathcal{G}_1$,
					Grid points of $\mathcal{G}_2$},
				legend style={at={({\the\numexpr\lx+1\relax},{\the\numexpr\ly+1\relax})},font=\footnotesize}] 
				\addlegendimage{mark=*,draw=black, only marks}
				\addlegendimage{uxbox}
				\addlegendimage{uybox}
				\end{customlegend}
				
				\end{tikzpicture}
		\end{TikZOrPDF}
		\caption{Illustration of the staggered grid, where empty boxes mean zero movement\label{fig:grid}}
	\end{figure}
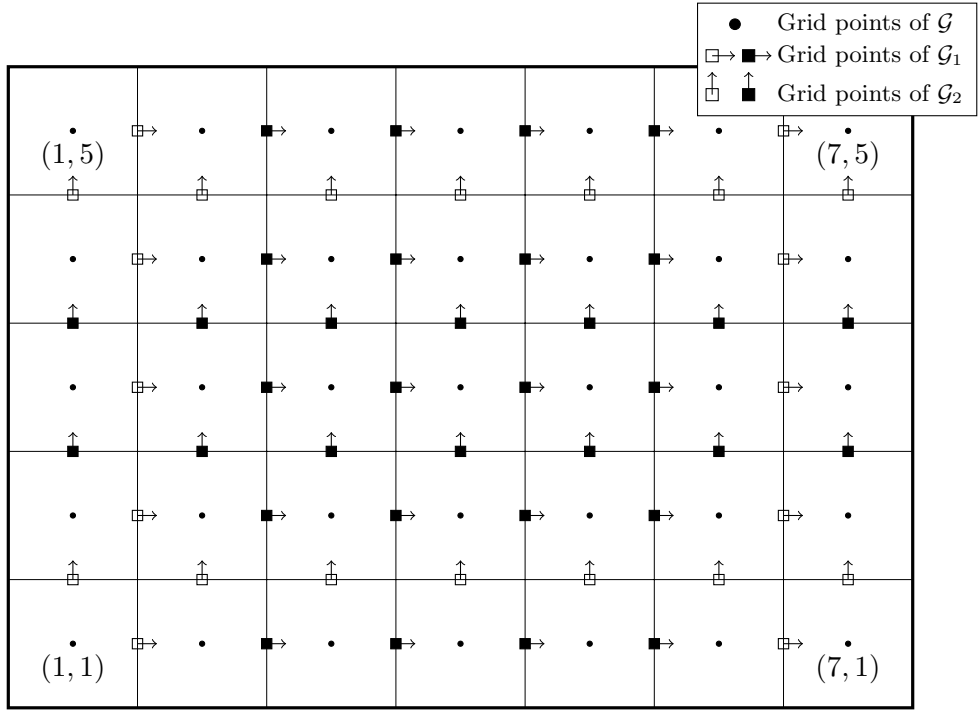

	\subsection{Computation of the Deformation Sequence}
	
	For discretizing the operators in \eqref{eq:pot} we work as usual on staggered grids.
	For an application of mimetic grid techniques in optical flow computation see also
	\cite{YSS07}.
	Let
	$$
	\grid_d \coloneqq \{\tfrac32,\ldots,n_1-\tfrac12\} \times \{\tfrac32,\ldots,n_2-\tfrac12\}
	$$
	be the (inner) dual grid, i.e., $\grid$ shifted by $\frac12$ in each direction,
	and
	$$
	\grid_1 \coloneqq \{\tfrac32,\ldots,n_1- \tfrac12\} \times \{1,\ldots,n_2\},\quad
	\grid_2 \coloneqq \{1,\ldots,n_1\} \times \{\tfrac12,\ldots,n_2- \tfrac32\}.
	$$
	Further, we consider $v = (v_1,v_2)^\tT$ with
	$v_1\colon   \grid_1 \rightarrow \mathbb R$ 
	and 
	$v_2\colon   \grid_2 \rightarrow \mathbb R$.
	In contrast to $\varphi(x) = x$, $x \in \partial \Omega$, 
	we allow flow along the boundary, i.e., $\langle v(x),n(x)\rangle_2 = 0$ for the outer normal $n(x)$ at $x\in\partial\mathcal{G}$. 
	In many applications features intersect with the boundary, see e.g., the green stripe in Fig.~\ref{fig:colormodels} 
	or the top part of Fig.~\ref{fig:cam}. Here movement along the boundary is reasonable even though the theorems from Section~\ref{sec:dm} do not necessarily hold true.
	To circumvent the gap to the theory we could embed the image into a larger domain by extending it with a constant value. After the computation the extension could be removed. But this leads to a higher computational effort as a larger image has to be processed 
	and the artificial boundary part might influence the original image.
	\\	
	Henceforth we use $\langle v(x),n(x)\rangle_2 = 0$, regarding the staggered grid the boundary conditions read
	\begin{align*}
	v_1(\tfrac32,x_2) &= v_1(n_1-\tfrac12,x_2) = 0,\quad x_2\in  \{1,\ldots,n_2\},\\
	v_2(x_1,\tfrac32) &= v_2(x_1,n_2-\tfrac12) = 0,\quad x_1\in  \{1,\ldots,n_1\}.
	\end{align*}
	See Fig.~\ref{fig:grid} for an illustration.
	We approximate $\partial_x v_1$ for $x = (x_1,x_2)^\tT \in \grid_p$ by
	\begin{equation*}
	(D_{1,x} v_1)(x_1,x_2) \coloneqq 
	\left\{
	\begin{array}{ll} 
	0& x_1 = 1,\\
	v_1(x_1+\tfrac{1}{2},x_2) -  v_1(x_1-\tfrac{1}{2},x_2) & x_1 = 2, \ldots, n_1-1,\\
	0& x_1 = n_1,
	\end{array}
	\right.
	\end{equation*}
	and $\partial_y v_1$ for $x_1 \in \{1,\ldots n_1-1\}$ and $x_2 \in \{1,\ldots,n_2-1\}$ by
	\begin{equation*}
	(D_{1,x_2} v_1)(x_1+\tfrac{1}{2},x_2+\tfrac{1}{2}) 
	=v_1(x_1 +\tfrac{1}{2},x_2+1) - v_1(x_1+\tfrac{1}{2},x_2),
	\end{equation*}
	and similarly for the derivatives of $v_2$.
	Finally, we obtain
	\begin{align*}
	\mathcal{S}(v) =& \sum_{ x \in \grid}
	\mu \left( (D_{1,x_1} v_1)^2(x) + (D_{2,x_1} v_2)^2(x) \right) 
	+ \frac{\lambda}{2} \big( (D_{1,x_1} v_1)(x) + (D_{2,x_2} v_2)(x) \big)^2\\
	&+ 
	\sum_{x \in \grid_d} \frac{\mu}{2} \big( (D_{1,x_2} v_1)(x) + (D_{2,x_1} v_2)(x) \big)^2.
	\end{align*}
		
	The deformations $v$ are living on grids, hence it is more convenient to use matrix-vector notation.	
	Let us first rewrite $\mathcal S (v)$.
	Using 
	$v_1 \coloneqq \left (v_1(x_1+\frac12,x_2) \right)_{x_1,x_2=1}^{n_1-1,n_2} \in \mathbb R^{n_1-1, n_2}$, 
	$v_2 \coloneqq \left(v_1(x_1,x_2+ \frac12) \right)_{x_1,x_2=1}^{n_1,n_2-1} \in \mathbb R^{n_1, n_2 - 1}$
	and
	\begin{align*}	
		D_{1,x_1} &\coloneqq
		\begin{pmatrix}
		0 &   & & & 0 \\
		-1& 1 & & & \\
		& & \ddots  & &\\
		0 & & 0&-1 & 1\\
		0 & & & 0& 0
		\end{pmatrix}\in\RR^{n_1\times n_1-1},
		\\ 
		D_{1,x_2} &\coloneqq
		\begin{pmatrix}
		-1& 1 &   &        &  &   & 0\\
		0& -1& 1 &        &  &   &\\
		&   &   & \ddots &  &   &\\
		&   &   &        &-1& 1 &0 \\
		0 &   &   &        &0 &-1 &1
		\end{pmatrix}\in\RR^{n_2-2\times n_2},
	\end{align*}
	and similarly 
	$D_{2,x_1} \in \RR^{n_1-2\times n_1}$ and
	$D_{2,x_2} \in \RR^{n_2\times n_2-1}$,
	we obtain
	\begin{align*}	
	\mathcal{S}(v) &= \mu \left( \|D_{1,x_1} v_1 \|_F^2 +  \| v_2 D_{2,x_2}^\tT \|_F^2 + \frac{1}{2} \|v_1 D_{1,x_2}^\tT +  D_{2,x_1} v_2 \|_F^2\right)\\&\quad
	+ \frac{\lambda}{2} \|D_{1,x_1} v_1 + v_2 D_{2,x_2}^\tT  \|_F^2 + \eta\lVert v_1\rVert_F^2+\eta\lVert v_2\rVert_F^2
	,
	\end{align*}
	where $\| \cdot \|_F$ denotes the Frobenius norm of matrices.
	Reshaping the $v_i$, $i=1,2,$ columnwise into vectors $\vec v_1 \in \mathbb R^{(n_1-1) n_2}$ and $\vec v_2 \in \mathbb R^{n_1 (n_2 - 1)}$
	(where we keep the notation) and using tensor products $\otimes$ of matrices in
	\begin{equation}\label{eq:difference}
	\begin{split}
	\vec D_{1,x_1} &\coloneqq \mathbbm 1_{n_2} \otimes D_{1,x_1},
	\qquad\ \ \ \vec D_{2,x_1} \coloneqq \mathbbm 1_{n_2-1 } \otimes D_{2,x_1}, \\
	\vec D_{1,x_2} &\coloneqq  D_{1,x_1} \otimes \mathbbm 1_{n_1-1 },
	\qquad\vec D_{1,x_2} \coloneqq  D_{2,x_2} \otimes \mathbbm 1_{n_1},
	\end{split}
	\end{equation}
	the regularizing term can be rewritten as
	\begin{equation}
	\mathcal{S}(\vec v) = \left\| \vec S  \vec v \right\|_2^2, 
	\quad 
	\vec S \coloneqq \begin{pmatrix}
	\sqrt{\mu} \, \vec D_{1,x_1} & 0\\
	0& \sqrt{\mu} \, \vec D_{2,x_2} \\
	\sqrt{\frac{\mu}{2}} \, \vec D_{1,x_2} &\sqrt{\frac{\mu}{2}} \, \vec D_{2,x_1}\\
	\sqrt{\frac{\lambda}{2}} \, \vec D_{1,x_1} &\sqrt{\frac{\lambda}{2}} \, \vec D_{2,x_2}\\
	\sqrt{\eta} \, \mathbbm{1}_{(n_1-1)n_2} & 0\\
	0 & \sqrt{\eta} \, \mathbbm{1}_{n_2(n_2-1)}
	\end{pmatrix}, \quad
	\vec v = \begin{pmatrix} \vec v_1\\ \vec v_2 \end{pmatrix}.\label{eq:reg}
	\end{equation}	
	
	For the discretization of the higher order term
		\begin{equation*}
		\lvert D^m \vec v\lvert^2 = \sum_{\lvert\alpha\rvert = m} \lVert \vec D^{\alpha}_{1} \vec v_1\Vert_2^2 + \lVert \vec D^{\alpha}_{2} \vec v_2\Vert_2^2,
		\end{equation*}
	we use finite difference matrices $\vec D^{\alpha}_{1}$, $\vec D^{\alpha}_{2}$ similar to~\eqref{eq:difference}. Setting $\vec D_1^m\coloneqq (\vec D^{\alpha}_{1})_{\lvert\alpha\rvert = m}$ and $\vec D_2^m\coloneqq (\vec D^{\alpha}_{2})_{\lvert\alpha\rvert = m}$ we can extend $\vec S$ from~\eqref{eq:reg} to 
	\begin{equation*}	
	\vec S \coloneqq \begin{pmatrix}
	\sqrt{\mu} \, \vec D_{1,x_1} & 0\\
	0& \sqrt{\mu} \, \vec D_{2,x_2} \\
	\sqrt{\frac{\mu}{2}} \, \vec D_{1,x_2} &\sqrt{\frac{\mu}{2}} \, \vec D_{2,x_1}\\
	\sqrt{\frac{\lambda}{2}} \, \vec D_{1,x_1} &\sqrt{\frac{\lambda}{2}} \, \vec D_{2,x_2}\\
	\sqrt{\eta} \, \mathbbm{1}_{(n_1-1)n_2} & 0\\
	0 & \sqrt{\eta} \, \mathbbm{1}_{n_2(n_2-1)}\\
	\sqrt{\gamma}\, \vec D_1^m & 0\\
	0 & 	\sqrt{\gamma}\, \vec D_2^m
	\end{pmatrix},
	\end{equation*}
	such that $\mathcal{S}(\vec v)$ includes all regularization terms.		
			
	To discretize the data term in  \eqref{eq:regu} we need to approximate $T\left( x - v(x)\right)$.
	Since $v$ is not defined on $\grid$ we use instead of $v$ its bilinear interpolation at the grid points, i.e., the averaged 
	version $Pv =(P_1v_1,P_2v_2) \coloneqq \grid\rightarrow \RR^2$  given by
	\begin{equation*}
	(P_1 v_1)(x_1,x_2) \coloneqq 
	\left\{
	\begin{array}{ll}
	0,& x_1=1,\\
	\tfrac12\left( v_1(x_1-\tfrac12,x_2) + v_1(x_1+\tfrac12,x_2) \right),& x_1=2,\ldots,n_1-1,\\
	0,& x_1=n_1,
	\end{array}
	\right.
	\end{equation*}
	and similarly for $P_2 v_2$ in $x_2$-direction. 
	In matrix-vector notation the averaging operator $P$ can be written as $Pv = (P_{n_1} v_1, v_2 P_{n_2}^\tT)$ using
	\begin{equation*}
		P_{n} = \frac{1}{2}	\begin{pmatrix}		
		1& 1 & & & 0\\
		& & \ddots  & &\\
		0 & & 0 & 1 & 1
		\end{pmatrix}\in\RR^{n,n+1}, \quad n \in\mathbb{N}.
	\end{equation*} 
	For the vectorized displacement $\vec v$ this becomes
	\begin{equation*}
	\vec P \vec v \coloneqq (\vec P_1\vec v_1,\vec P_2\vec v_2) \coloneqq \left( (\mathbbm{1}_{n_2} \otimes P_{n_1}) \vec v_1, (P_{n_2}\otimes \mathbbm{1}_{n_1}) \vec v_2 \right).
	\end{equation*} 		
	In general $x - Pv(x) \not \in {\mathcal G}$, so that the discrete image $ T$ has to be interpolated. 
	To this end, we use a counterpart of bilinear interpolation on manifolds, see e.g.~\cite{PFA06}. 
	It is based on a reinterpretation of the bilinear interpolation of real valued data points.	
	Let $f_{00},f_{01},f_{10},f_{11}\in\RR$ be the values at vertices of the unit cell. 
	Then the bilinear interpolation $f(x)$ at $x = (x_1,x_2)^\tT\in[0,1]^2$ is given by
	\begin{align*}
		f(x) 
		&= (1-x_1)(1-x_2)f_{00}+(1-x_1)x_2f_{01}+x_1(1-x_2)f_{10}+x_1x_2f_{11}\\
		&=\argmin_{f\in\RR}\Bigl\{(1-x_1)(1-x_2)(f-f_{00})^2+(1-x_1)x_2(f-f_{01})^2\\
		&\qquad\qquad\quad+x_1(1-x_2)(f-f_{10})^2+x_1x_2(f-f_{11})^2\Bigr\}.
	\end{align*}
	The latter formulation, which expresses the bilinear interpolation as mean, was applied e.g. in~\cite{PFA06} 
	to generalize bi- and trilinear interpolation to manifolds 
	using the Karcher mean with the appropriate weights:
	\begin{equation}\label{eq:karcher}
	\begin{split}
	f(x) &=\argmin_{f\in\HH}\Bigl\{(1-x_1)(1-x_2)d^2(f,f_{00})+(1-x_1)x_2d^2(f,f_{01})\\
	&\qquad\qquad\quad+x_1(1-x_2)d^2(f,f_{10})+x_1x_2d^2(f,f_{11})\Bigr\},
	\end{split}
	\end{equation}
	for $f_{00},f_{01},f_{10},f_{11}\in\HH$.
	In \cite{AGSW16}, it was shown that this bilinear interpolation  
	leads to $C^0(\Omega,\mathcal{H})$ images, which are $C^1(\Omega,\mathcal{H})$  at points $x = (x_1,x_2)^\tT$ 
	with neither $x_1\in\mathbb{Z}$ nor $x_2\in\mathbb{Z}$. 
	In the following, we write $T(x)\in\HH$ for the interpolated values of $T\colon\grid\to\HH$.

	In summary, the spatial discrete registration functional is given by
	\begin{equation*}\label{eq:disreg}
	\mathcal{R}(\vec v) \coloneqq \lVert\vec S \vec v\rVert_2^2 +\sum_{x\in\grid}d^2\bigl(T(x- (P v)(x)), R(x)\bigr).
	\end{equation*}
	
	To find a minimizer of this functional we want to apply a quasi-Newton method.
	To this end, we need the chain rule of the differential of
	two concatenated functions $F\colon\HH_1\to\HH_2$ and $G\colon\HH_0\to\HH_1$,
	$$
	D (F\circ G) (x) [\eta]\coloneqq DF\bigl(G(x)\bigr)\bigl[DG(x)[\eta]\bigr],
	$$
	see, e.g.~\cite[Chapter 4]{Lee12},
	and the definition of 	
	the Riemannian gradient of a function
	$F\colon\HH\to \mathbb R$, 
	$$
	\langle\nabla F(x),\eta\rangle_{x} \coloneqq DF(x)[\eta]  \quad \mbox{for all} \; \eta\in T_{x}\HH,
	$$
	where $\langle\cdot,\cdot\rangle_x$ denotes the Riemannian metric on the tangent space $T_x\HH$. 
	We start with the gradient of $T$. 
	Let $\lceil z\rceil$ be the smallest integer larger than $z$, and
	$\lfloor z\rfloor$ the largest integer smaller or equal than $z$.
	The derivative $\nabla_{x_1} T(x)\colon\RR^2\to T\mathcal{H}$ is approximated by
	\begin{equation}\label{eq:forwarddiff}
	\nabla_{x_1}   T(x) = \frac{1}{\lceil x_1\rceil-x_1}\log_{T(x)} T\bigl((\lceil x_1\rceil,x_2)\bigr).
	\end{equation}
	Note that for $T\bigl((\lceil x_1\rceil,x_2)\bigr)$ the minimization~\eqref{eq:karcher} reduces to solving
	\begin{align*}
	0 = (x_2-\lfloor x_2\rfloor)\log_{f}\bigl( T\bigl((\lceil x_1\rceil,\lceil x_2\rceil)\bigr)\bigr) + (x_2-\lceil x_2\rceil)\log_{f}\bigl(T\bigl((\lceil x_1\rceil,\lfloor x_2\rfloor)\bigr)\bigr)
	\end{align*}	
	for $f$. The solution is given analogously to~\eqref{getFF} by
	\begin{equation*}
	T\bigl((\lceil x_1\rceil,x_2)\bigr) 
	= \exp_{ T(\lceil x_1\rceil,\lfloor x_2\rfloor)}\Bigl((x_2- \lfloor x_2\rfloor)\log_{  T(\lceil x_1\rceil,\lfloor x_2\rfloor)}\bigl({  T(\lceil x_1\rceil,\lceil x_2\rceil)}\bigr)\Bigr).
	\end{equation*}
	The derivative in $x_2$-direction can be computed  analogously.
	
	Assuming  $T\in C^{1}(\Omega,\HH)$, we can calculate the gradient of $\mathcal{R}$ as
	\begin{align}\label{grad_1}
		\nabla_{\vec v}\mathcal{R}(\vec{v}) &= \nabla_{\vec{v}} \mathcal S(\vec v) + \nabla_{\vec v} \sum_{x\in\grid}d^2\bigl(T(x- (P v)(x)), R(x)\bigr)\\
		&= 2 \vec S^\tT\vec S \vec v + \vec G(\vec v).\notag
	\end{align}
	The gradient of the data term
	\begin{equation*}
	\vec G(\vec v) \coloneqq 
	\begin{pmatrix} 
	\nabla_{\vec v_1} \sum_{x\in\grid}d^2\bigl(T(x- (P v)(x)),R\bigr)\\
	\nabla_{\vec v_2} \sum_{x\in\grid}d^2\bigl(T(x- (P v)(x)),R\bigr)
	\end{pmatrix}
	\end{equation*} 
	is calculated by the chain rule 
	\begin{align}
	&\nabla_{\vec v_1} \Bigl(\sum_{x\in\grid} d^2\bigl(T(x- P\cdot ), R(x)\bigr)\Bigr)(\vec v)\notag\\
	&= -\vec P_1^{\tT} \nabla \Bigl( \sum_{x\in\grid} d^2\bigl(T(x+ (\cdot, -( P_2  v_2) (x))^{\tT}), R(x)\bigr)\Bigr)\left(-\vec P_1\vec v_1\right)\notag\\
	&= -\vec P_1^{\tT}  \Bigl( \sum_{x\in\grid} \nabla d^2\bigl(T(x+ (\cdot, -( P_2  v_2) (x))^{\tT}), R(x)\bigr)\left(-\vec P_1\vec v_1(x)\right)\Bigr).\label{eq:imagegrad}
	\end{align}
	Using the chain rule the ``inner'' derivative is given for $x\in\grid$, $v\in\RR^2$, and $\eta\in\RR$ by
	\begin{align*}
	D \Bigl(d^2\bigl(&T(x+ (\cdot,v_2)^\tT), R(x)\bigr)\Bigr)(v_1)[\eta] \\
	&= \Bigr\langle\nabla d^2\bigl(\cdot,R(x)\bigr)\bigl(T(x+v)\bigr),DT\bigr(x+(\cdot,v_2)^\tT\bigr)(v_1)[ \eta]\Bigr\rangle_{T(x+v)}\\
	&= \Bigl\langle\nabla d^2\bigl(\cdot,R(x)\bigr)\bigl(T(x+v)\bigr),\eta \, \nabla_{x_1} T(x+v)   \Bigr\rangle_{T(x+v)}\\
	&= \eta \, \langle-2\log_{T(x+v)}\bigl(R(x)\bigr),\nabla_{x_1} T(x+v)\rangle_{T(x+v)}\\
	&= \eta \, \nabla d^2\Bigl(T\bigl(x+ (\cdot,v_2)^\tT\bigr), R(x)\Bigr)(v_1).
	\end{align*}
	Plugging this into~\eqref{eq:imagegrad}, we obtain
	\begin{align*}
	\nabla_{\vec v_1} \Bigl(\sum_{x\in\grid} & d^2\bigl(T(x- P\cdot ), R(x)\bigr)\Bigr)(\vec v)\\
	&=\vec P_1^{\tT} \Bigl(2\bigl\langle\log_{T(x- (Pv)(x))} R(x),
	\nabla_{x_1} T\bigl(x- (Pv)(x)\bigr)\bigr\rangle_{T(x- (Pv)(x))}\Bigr)_{x\in\grid},
	\end{align*}
	and
	\begin{align*}
	\nabla_{\vec v_2} \Bigl(\sum_{x\in\grid} & d^2\bigl(T(x- P\cdot ), R(x)\bigr)\Bigr)(\vec v)
	=\\& \vec P_2^{\tT} \Bigl(2\bigl\langle\log_{T(x- (Pv)(x))} R(x),\nabla_{x_2} T\bigl(x- (Pv)(x)\bigr)\bigr\rangle_{T(x- (Pv)(x))}\Bigr)_{x\in\grid}.
	\end{align*}

	With the above derived gradient \eqref{grad_1} we can solve the registration problems by a gradient descent algorithm
	\begin{equation*}
	\vec v^{(l+1)} =\vec v^{(l)}-\tau \nabla_{\vec v} \mathcal{R}(\vec v^{(l)}),
	\end{equation*}
	with $\tau$ chosen, e.g.  by line search.
	As $\mathcal{R}$ is not convex we only get convergence of a subsequence to a critical point. 
	The result depends on the starting value, whose choice is described in Subsection~\ref{sec:multilvl}.
	Gradient descent algorithms are known to have bad convergence rates, hence we employ a quasi-Newton method. 
	The decent direction is given by
	\begin{equation*}
		\vec{D}(\vec{v}) = -\vec H^{-1} (\vec v^{(l)})\nabla_{\vec v}{\mathcal{R}}(\vec v^{(l)}),
	\end{equation*} 
	where $\vec H(\vec{v})$ is an approximation of the Hessian of $\mathcal{R}$. For the minimization of $\mathcal{R}$ we iterate
	\begin{equation*}
	\vec v^{(l+1)} =\vec v^{(l)}+\tau \vec{D}(\vec{v}^{(l)}),
	\end{equation*}
	with $\tau$ chosen by line search.
	The Hessian is approximated by 
	\begin{equation*}
	\vec{H}(\vec{v}) = 2\vec S^\tT\vec S +  \vec J(\vec{v})^{\tT} \vec J(\vec{v}),
	\end{equation*}
	where $\vec J(\vec{v}) \in \RR^{n_1n_2,((n_1+1)n_2+n_1(n_2+1))}$ 
	is the Jacobian of
	\begin{equation*}	
	d^2\bigl(T(x- (P\cdot )(x), R(x))\bigr) \colon\mathcal{G}_1\times \mathcal{G}_2 \to\RR.	\end{equation*}
	For vectorized images the Jacobian has the form
	\begin{align*}
	\vec J(\vec{v}) = 
	2 \biggl(&\diag\Bigl(\Bigl(\bigl\langle\log_{T(x-\vec P\vec{v}(x))}\vec R(x),\nabla_{x_1} T\bigl(x-\vec P\vec{v}(x)\bigr)\bigr\rangle_{T(x-\vec P\vec{v}(x))}\Bigr)_{x\in\grid}\Bigr)\vec P_1^\tT,\\&
	\diag\Bigl(\Bigl(\bigl\langle\log_{T(x-\vec P\vec{v}(x))}\vec R(x),\nabla_{x_2} T\bigl(x-\vec P\vec{v}(x)\bigr)\bigr\rangle_{T(x-\vec P\vec{v}(x))}\Bigr)_{x\in\grid}\Bigr)\vec P_2^\tT\biggr).
	\end{align*}
	
	\subsection{Computation of the Image Sequence}
	
	For a given displacement sequence $\vec V = (\vec v_1,\dots,\vec v_K)^\tT$,
	we can minimize 
	$J_{\boldsymbol{\varphi}} = J_{{\mathbbm 1}-\vec V}$ 
	by the construction given in the proof of Theorem \ref{lem:uni:seq},
	in particular formula \eqref{getFF}.
	This means we need to evaluate 
	geodesics between the initial $T$ and final image $R$ to obtain the minimizing image sequence. 
	In particular, for each $x\in\grid$ the geodesic 
	$\gamma_{\overset{\frown}{T\circ  \psi_0(x)}, R (x)}\subset\HH$ 
	has to be evaluated at the points $t_k(x)$, $k = 1,\dots,K-1$. 
	Here the grids $ \psi_k\colon \grid\to\Omega$, $k = 0,\dots,K,$ are derived by
	\begin{align*}
		 \psi_K(x) &= x, \quad
		 \psi_k(x) = \psi_{k+1}(x) - Pv_{k+1}\left( \psi_{k+1}(x)\right),
	\end{align*}
	where $ v_{k+1}\left(\psi_{k+1}(x)\right)$ 
	is obtained by bilinear interpolation on $\mathbb  R^2$. 
	The Jacobian of $\varphi_k = {\mathbbm 1}- v_k$ 
	needed in the computation is calculated using forward differences 
	and evaluated at the grid points $\psi_k(x)$ by bilinear interpolation.
	For each $x\in\grid$ and $k = 1,\dots,K-1$, we compute 
	$I_k \left(\psi_k(x)\right)$ 
	by evaluating the geodesics.
	Note that the intermediate images are  calculated at scattered points in $\Omega$ which are in general not on the grid. 
	Finally, the desired values $I_k(x) \in \HH$, $x\in\grid,$ 
	are obtained by linear scattered data interpolation of manifold-valued data.

	In the following remark we detail the convergence properties of the alternating minimization scheme~\eqref{non-couple} and~\eqref{couple} in the discrete setting.
	\begin{remark}[Convergence of the alternating minimization]~\\[-1.5\baselineskip]	
		\begin{description}
			\item[Deformation sequence] The computation of the deformation sequence is performed by an quasi-Newton method with appropriate step size. Hence, the deformations converge to a critical points of the functional and guarantee that the value of the functional does not increase.
			\item[Image sequence] The computation of the image sequence is based on the proof of Theorem~\ref{lem:uni:seq}, 
			which relies on continuous integrals. For the discrete setting, we have no proof that the functional value decreases
			which would imply a decrease of the whole functional $\mathcal{J}$ and the existence of a
			weakly convergent subsequence of images $\mathbf{I}^{(j_l)}$ in $(\HH^{n_1,n_2})^{K+1}$.
			However, in our numerical examples, we observed a decrease for non-degenerated deformation fields.		
		\end{description}
		\end{remark}
	\subsection{Multiscale Minimization Scheme}\label{sec:multilvl}
	Neither the energy $\mathcal J(\vec I, \boldsymbol{\varphi})$ nor the registration functional $\mathcal R(\vec v)$ is convex. 
	Hence an initialization close to the optimal solution is desirable.
	As usual in optical flow and image registration we apply a coarse-to-fine strategy. 
	First, we iteratively smooth our given images by convolution with a truncated Gaussian and downsample using bilinear interpolation.
	On the coarsest level we perform a single registration to obtain a deformation. 
	We apply a bilinear interpolation to construct a deformation on the finer level. Then we construct $\tilde K -1$, $\tilde K < K$ intermediate finer level images by
	\begin{equation}\label{eq:inter:img}
		I_k(x) = \exp_{T}\bigl(\frac{k}{\tilde K}\log_T (R\circ\varphi^{-1}\bigr)\bigl(x-\tfrac{k}{\tilde K}  Pv(x)\bigr),\quad \varphi(x) = x- Pv(x),
	\end{equation}
	where $T,R$ are the start and end images at the current level. The inversion of the mapping $\varphi\colon\grid\to\Omega$ is realized by linear scattered interpolation of the identification 
	\begin{equation*}
	\varphi^{-1}\bigr(\varphi_1(x_1,x_2),\varphi_1(x_1,x_2)\bigl) = (x_1,x_2).
	\end{equation*}
	Using this we obtain an initial image sequence on this level and start the alternating minimization 
	to obtain better deformations and intermediate images.

	Going to the next finer level by bilinear interpolation of the deformation and image sequence, 
	we construct more intermediate images by interpolating between neighboring ones similar to \eqref{eq:inter:img}.
	We repeat this procedure until we reach the maximal number $K$ of images and the finest level. 
	Going to finer level may increase the distance between subsequent images. 
	To keep the determinants of the optimal solution $\hat{\boldsymbol{\varphi}}$ bounded away from zero, 
	we can  adjust the number of intermediate images according to Remark~\ref{rem:feasible}.
	The multilevel strategy is sketched in Algorithm \ref{alg:morph}.
	
	\begin{algorithm}[htb]
		\caption{Morphing Algorithm (informal) }\label{alg:morph}
		\begin{algorithmic}[1]
			\State $T_0 \coloneqq  T, R_0 \coloneqq  R,\grid_0 \coloneqq \grid$
			\State create image stack $(T_l)_{l=0}^{\operatorname{lev}},(R_l)_{l=0}^{\operatorname{lev}}$ on $(\grid_l)_{l=0}^{\operatorname{lev}}$ by smoothing and downsampling
			\State solve \eqref{eq:disreg} for $T_{\operatorname{lev}},R_{\operatorname{lev}}$ to get $\tilde{v}$
			\State $l \to \operatorname{lev}-1$
			\State use bilinear interpolation to get $v$ on $\grid_{l}$ from $\tilde{v}$
			\State obtain $\tilde{K}_{l}$ images $\vec I_{l}^{(0)}$ from $T_{l},R_{l},v$ by \eqref{eq:inter:img}
			\While{$l\ge0$}
			\State find images $\tilde{\vec I}_l$ and deformations $\tilde{\vec v}_l$ minimizing \eqref{main} with initialization~$\vec I_l^{(0)}$
			\State $l\to l-1$
			\If{$l>0$}
			\State use bilinear interpolation to get $\vec I_l$ and $\vec  v_l$ on $\grid_l$
			\For{$k = 1,\dots,\tilde K _l$}
			\State calculate $\tilde{K}_l$ intermediate images between $I_{l,k-1},I_{l,k}$ with $v_{l,k}$ using~\eqref{eq:inter:img}
			\EndFor
			\EndIf
			\EndWhile	
			\vspace{0.4cm}\State $\vec I \coloneqq \vec I_0$
		\end{algorithmic}
	\end{algorithm}

	\section{Numerical Examples}\label{numerics}
	In this section, we present various proof-of-concept examples.
	While the minimization of \eqref{non-couple} and \eqref{couple}, as well as the muligrid scheme are implemented in \textsc{Matlab} 2017a, 
	the  manifold-valued image processing functions, like filtering, bilinear interpolation, and interpolation of scattered data are   
	implemented as part of the
	\href{http://www.mathematik.uni-kl.de/imagepro/members/bergmann/mvirt/}{\glqq Manifold-valued 
	Image Restoration Toolbox\grqq (MVIRT)}\footnote{open source, 
	available at~\href{http://www.mathematik.uni-kl.de/imagepro/members/bergmann/mvirt/}{www.mathematik.uni-kl.de/imagepro/members/bergmann/mvirt/}} 
	by Ronny Bergmann and Johannes Persch.
	The toolbox uses C\texttt{++} implementations of the basic manifold functions, like logarithmic and exponential maps, as well as the Karcher means, 
	which are imported into \textsc{Matlab} using \lstinline!mex!-interfaces with the GCC 4.8.4 compiler.\\
	In all examples, we set $m \coloneqq 3$ and  $\mu = \lambda = \gamma \eqqcolon\alpha$. The determinants of  $D\varphi_k$ in our numerical experiments stayed positive even using $\eta=0$.
	
	\subsection{Images in Different Color Spaces}\label{subsec:color}
	\begin{figure}
		\centering
		\newcommand{\imagewidth}{0.163\textwidth}
		\includegraphics[width = \imagewidth]{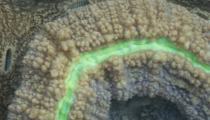}\,%
		\includegraphics[width = \imagewidth]{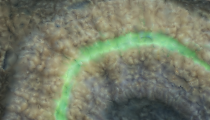}\,%
		\includegraphics[width = \imagewidth]{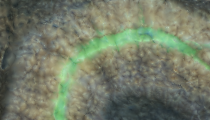}\,%
		\includegraphics[width = \imagewidth]{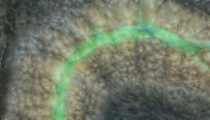}\,%
		\includegraphics[width = \imagewidth]{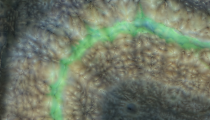}\,%
		\includegraphics[width = \imagewidth]{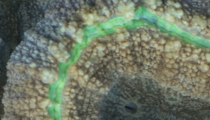}\\
		RGB color model
		\\[1ex]
		\includegraphics[width = \imagewidth]{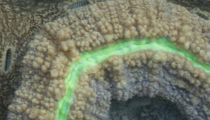}\,%
		\includegraphics[width = \imagewidth]{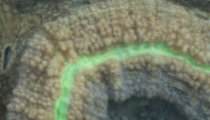}\,%
		\includegraphics[width = \imagewidth]{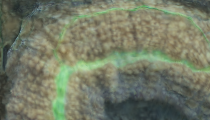}\,%
		\includegraphics[width = \imagewidth]{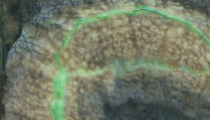}\,%
		\includegraphics[width = \imagewidth]{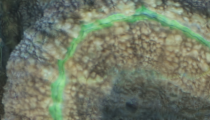}\,%
		\includegraphics[width = \imagewidth]{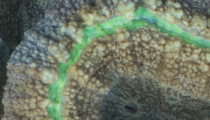}\\
		\includegraphics[width = \imagewidth]{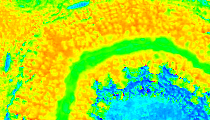}\,%
		\includegraphics[width = \imagewidth]{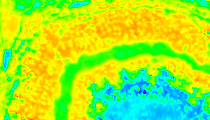}\,%
		\includegraphics[width = \imagewidth]{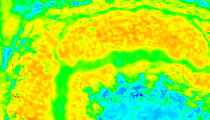}\,%
		\includegraphics[width = \imagewidth]{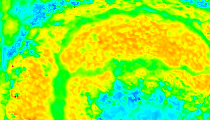}\,%
		\includegraphics[width = \imagewidth]{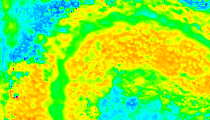}\,%
		\includegraphics[width = \imagewidth]{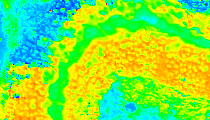}\\
		HSV color model with hue in the second row
		\\[1ex]
		\includegraphics[width = \imagewidth]{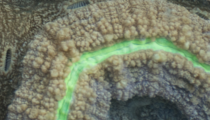}\,%
		\includegraphics[width = \imagewidth]{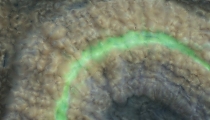}\,%
		\includegraphics[width = \imagewidth]{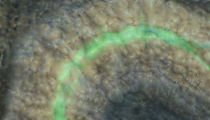}\,%
		\includegraphics[width = \imagewidth]{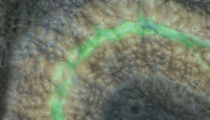}\,%
		\includegraphics[width = \imagewidth]{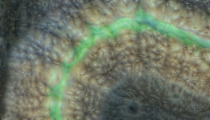}\,%
		\includegraphics[width = \imagewidth]{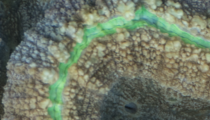}\\
		\includegraphics[width = \imagewidth]{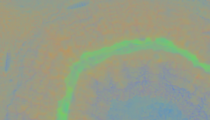}\,%
		\includegraphics[width = \imagewidth]{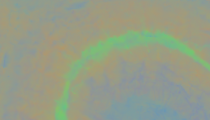}\,%
		\includegraphics[width = \imagewidth]{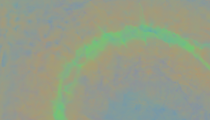}\,%
		\includegraphics[width = \imagewidth]{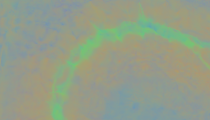}\,%
		\includegraphics[width = \imagewidth]{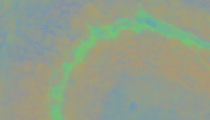}\,%
		\includegraphics[width = \imagewidth]{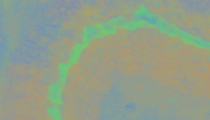}\\
		CB color model with chromaticity in second row
		\caption{Image path between two images of a sponge, using different color models. 
		}\label{fig:colormodels}
	\end{figure}
	First, we are interested in the morphing path of color images in different color spaces 
	having a nonlinear structure which is \emph{not} a Hadamard one.
	We compare 
	\begin{itemize}
	 \item[-] the linear RGB color space,\\[-1.25\baselineskip]
	  \begin{itemize}
	  	\item[] the image path is calculated using $\operatorname{lev}=7$, 
	  	where the image size is decreased to $75\%$ per level, $\alpha =0.00025$,
	  	and $\tilde K_6 = 3,\tilde K_5=2, \tilde K_4=1, \tilde K_l = 0, l=1,2,3$, i.e.,
	  	we decrease the number of new intermediate images while going to finer levels,
	  \end{itemize}
	 \item[-] the hue-saturation-value (HSV) color space, where the hue is phase valued, i.e., in $\mathbb S^1$,\\[-1.25\baselineskip]
	 \begin{itemize}
	 	\item[] the image path is calculated using $\operatorname{lev}=5$, 
	 	where the image size is decreased to $60\%$ per level, $\alpha =0.1$,
	 	and $\tilde K_5 = 3,\tilde K_4=2, \tilde K_3=1, \tilde K_l = 0, l=1,2$, and
	 \end{itemize} 
	 \item[-] the chromaticity-brightness (CB) color space,
	 where the chromaticity is $\mathbb S^2$-valued,\\[-1.25\baselineskip]
	 \begin{itemize}
	 	 \item[] the image path is calculated using $\operatorname{lev}=8$, 
	 	 where the image size is decreased to $75\%$ per level, $\alpha =0.00025$,
	 	 and $\tilde K_7 = 3,\tilde K_6=2, \tilde K_5=1, \tilde K_l = 0, l=1,\dots,4$. 
	 \end{itemize}
	\end{itemize}
	In Fig.~\ref{fig:colormodels} we see the image paths between to images of sponges. 
	We calculated the image paths with $25$ images, but only show the images $I_k, k = 0,4,9,14,19,24$.
	The intermediate images are blurred due to the bilinear interpolation, 
	which could be improved for the real-valued images, 
	but the computational cost would be very high for the manifold-valued images.
	
	The morphing for the HSV model looks strange when looking at the color image, 
	but is reasonable when considering the hue. In the hue channel the large yellow area is moving, while the green stripe is merging into and emerging out of the boundary. Here we work on the manifold 
	$\mathbb S ^1\times[0,1]^2\subset\mathbb S ^1\times\RR^2$ with the usual product metric. 
	Since the distances in $\mathbb S^1$ are larger than in the interval $[0,1]$, the hue 
	dominates the morphing. Changing the metric, i.e., the weights for the product metric,
	could lead to different results with more pleasing color images. Here we stick to 
	the usual choice to emphasize the importance of the metric. 
	
	The image path of the CB model is very similar to the RGB path for this image. 
	Looking at the chromaticity we see that on the right part of the image a small portion of the green color vanishes and appears again close by. 
	This effect could be reduced by lowering $\lambda$ and $\mu$, but then the deformations become close to irregular. 
	However, on the left side the movement of the green stripe looks smooth while the background changes as we expected.
	\subsection{Symmetric Positive Definite Matrices \texorpdfstring{$\SPD(n)$}{P(n)}}
	
	Next, we consider images with values 
	in the manifold of symmetric positive definite $n \times n$ matrices $\SPD(n)$
	with the affine invariant metric~\cite{PFA06}.
	\subsubsection*{Moving $\SPD(3)$ Rectangle}
	
		\begin{figure}
		\begin{subfigure}{\textwidth}
		\centering
		\foreach \n in {1,...,7}{\includegraphics[width = 0.135\textwidth]{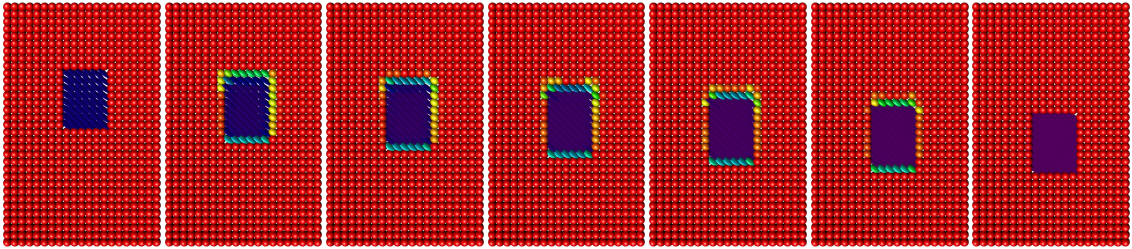}\,}
		\caption{Morphing path between two $\SPD(3)$ images with rectangular structures.}\label{fig:artspdpathhard}
		\end{subfigure}
		\begin{subfigure}{\textwidth}
		\foreach \n in {1,...,7}{\includegraphics[width = 0.135\textwidth]{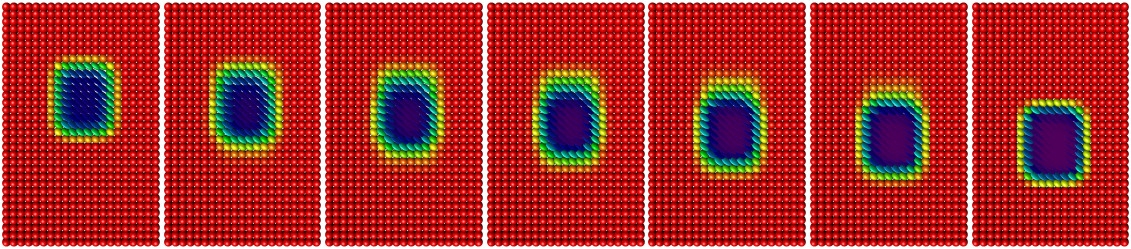}\,}	
		\caption{Morphing path between two $\SPD(3)$ images with smooth rectangular structures.}\label{fig:artspdpath}
		\end{subfigure}
		\caption{Comparison of the morphing of strong and smooth edges using the same set of parameters.}
	\end{figure}
	
	We start by computing a minimizing discrete path between simple synthetic images to see how edges are preserved.
	The template and reference images in Fig.~\ref{fig:artspdpathhard} consist of $3 I_3$ matrices in the background 
	and a rectangular part consisting of either the matrix
	\begin{equation*}
	A_{\mathrm{T}} = \begin{pmatrix}
		3 & 2 & 1\\2 & 4 &-1 \\1 & -1 & 2
	\end{pmatrix},\quad\text{or } A_{\mathrm{R}} = \exp_{3 I_3} (2\log_{3 I_3} A_{\mathrm{T}}),
	\end{equation*}
	which is moved downwards.
	The matrices are depicted as ellipsoids defined by their eigenvalues and eigenvectors. 
	For this image of size $21\times33$ we used $\operatorname{lev} =2$, 
	where the image size is decreased to $50\%$ per level and $\alpha = 1$.
	We calculated $5$ intermediate images between existing images on the new level
	to obtain $7$ images in total.
	The image path looks reasonable except for the smoothing of the rectangle in vertical morphing direction
	and at its right boundary. 
	The smoothing in the movement direction originates from the bilinear image interpolation model 
	used to obtain the intermediate images and the ``smoothness'' of the deformations. 
	It is possible to incorporate more sophisticated interpolation methods on manifolds, 
	but this involves higher computational cost. 
	The smoothing on the right side of the rectangle is an effect of the forward differences 
	used in the calculation of the discrete deformations~\eqref{eq:forwarddiff}. 
	This effect could also be reduced by a different discretization of the derivatives. 	
	However, for the similar images in Fig.~\ref{fig:artspdpath} with slightly smoothed edges our model performs well 
	and does not produce visible artifacts.
	\subsubsection*{Whirl $\SPD(2)$ Image}
	\begin{figure}
		\centering
		\includegraphics[width = 0.32\textwidth]{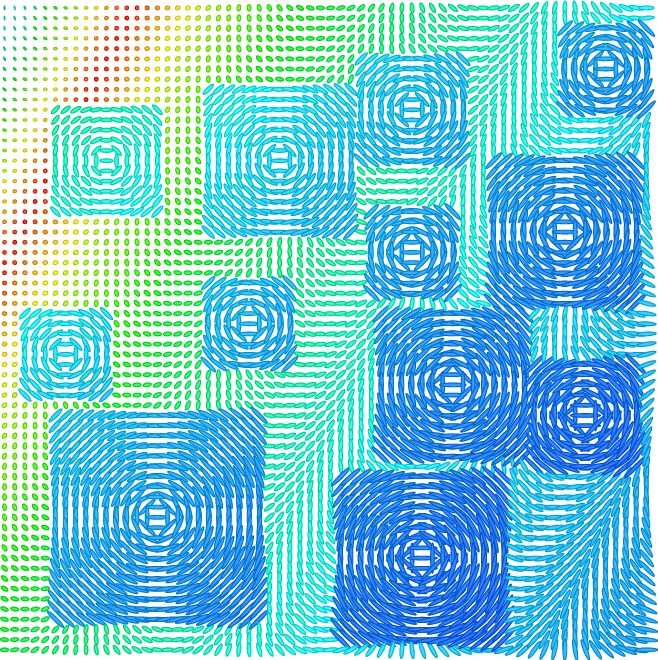}  
		\includegraphics[width = 0.32\textwidth]{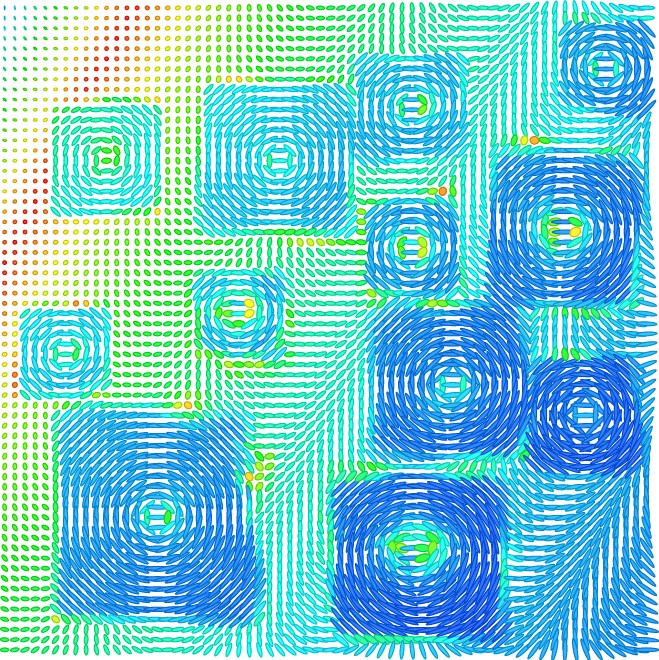} 
		\includegraphics[width = 0.32\textwidth]{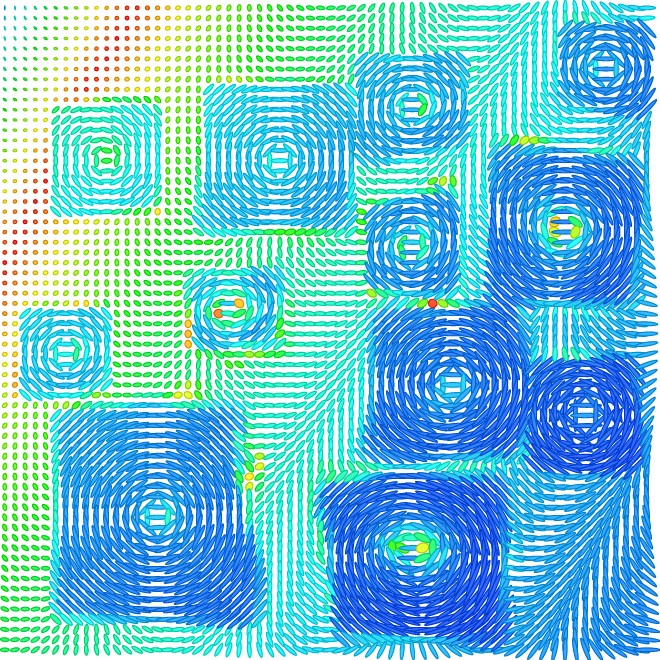} 
		
		\includegraphics[width = 0.32\textwidth]{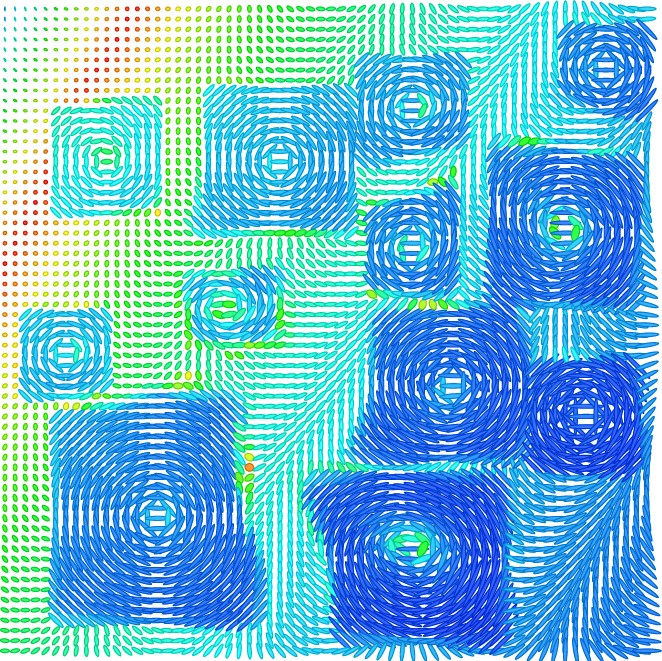} 
		\includegraphics[width = 0.32\textwidth]{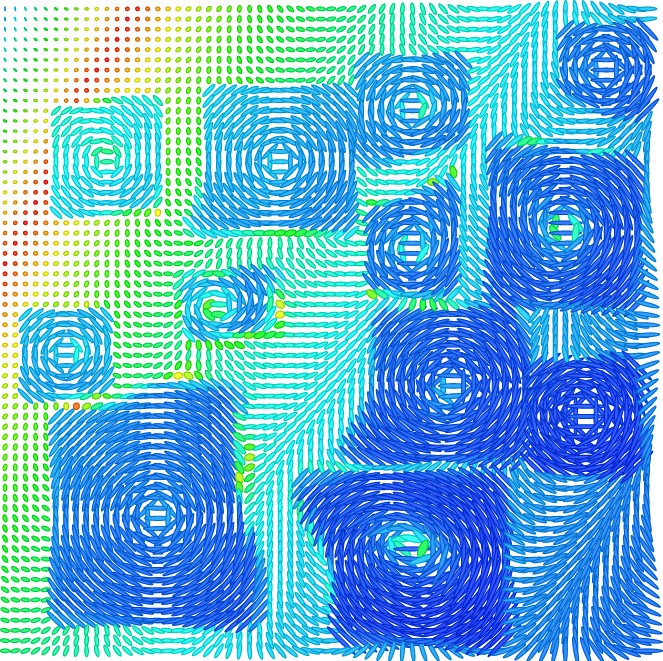} 
		\includegraphics[width = 0.32\textwidth]{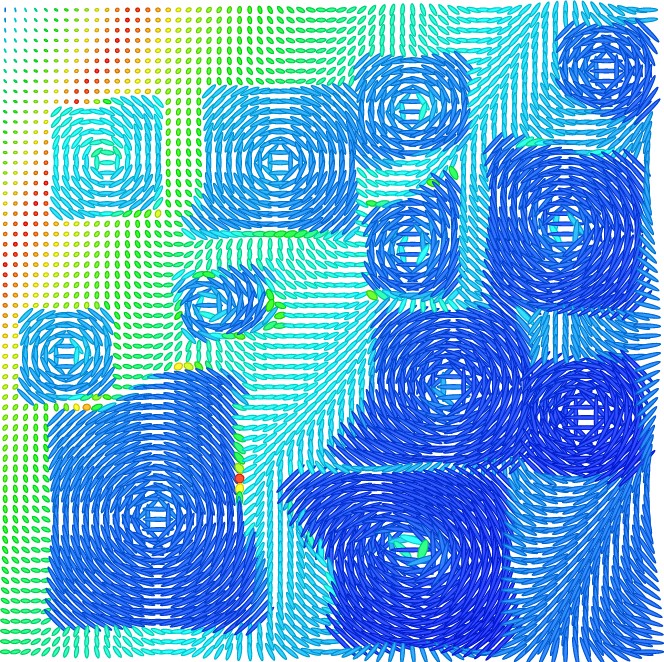} 
		\caption{Morphing path between two artificial $\SPD(2)$ images with whirl structures.}\label{fig:spd2}
	\end{figure}
	In Fig.~\ref{fig:spd2} we compute the discrete path between two $\SPD(2)$ images, 
	where we obtained the final one by deforming the start image and pushing its values further away from the identity. 
	The artificial deformation is more complicated as in the previous example. For this image of size $64\times 64$ we used $\operatorname{lev} = 4$, 
	where the image size is decreased to $75\%$ per level, $\alpha = 0.005,$
	$\tilde K_3 = 3,\tilde K_2=2, \tilde K_1=1$. 	
	Even though the deformation is more complicated than before, the path shows a reasonable transition from the starting to the final image.
	
	\subsubsection*{DT-MRI}
	\begin{figure}
		\centering
		\includegraphics[width = 0.50\textwidth]{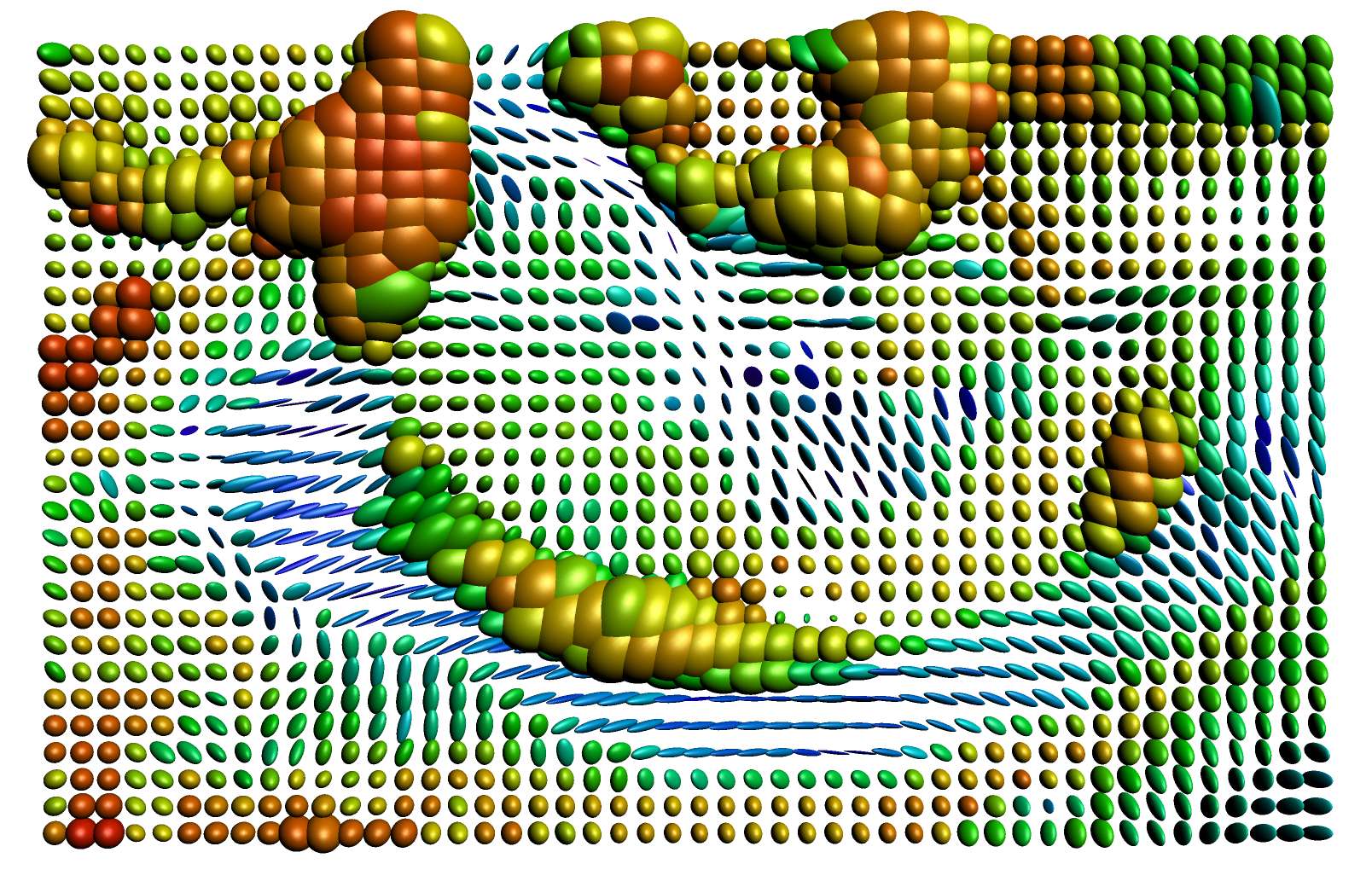}%
		\includegraphics[width = 0.50\textwidth]{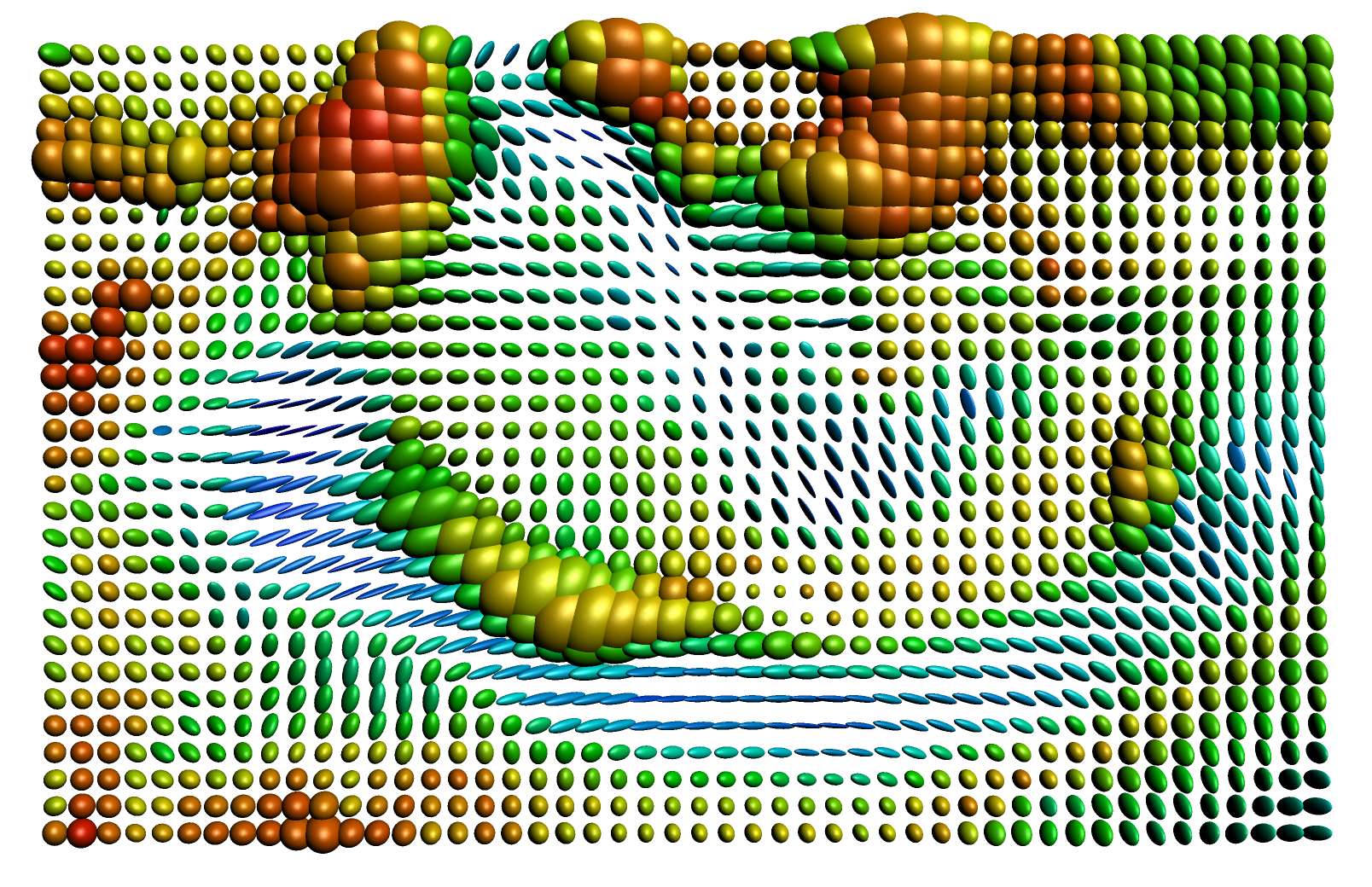}%
				
		\includegraphics[width = 0.50\textwidth]{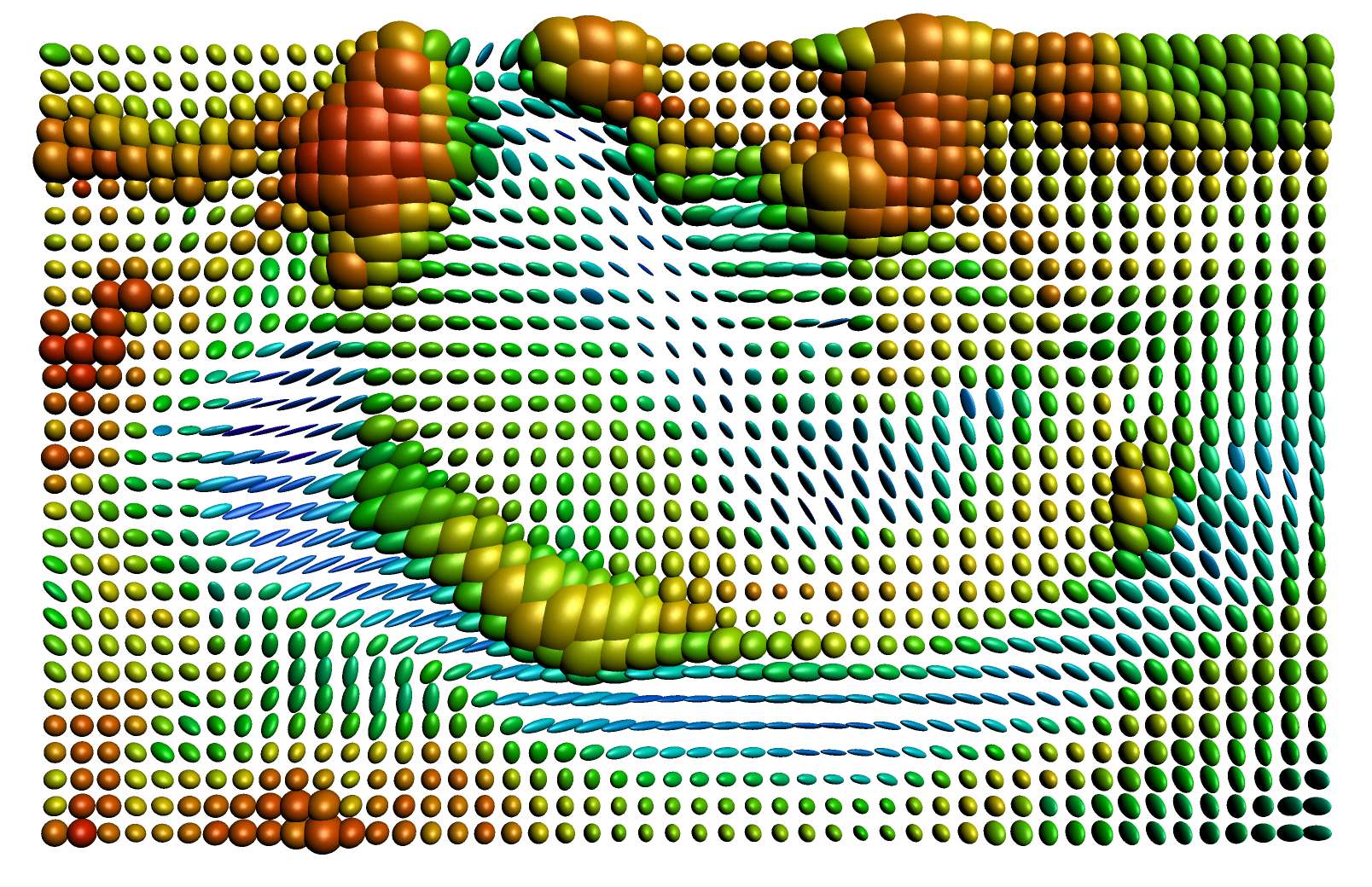}%
		\includegraphics[width = 0.50\textwidth]{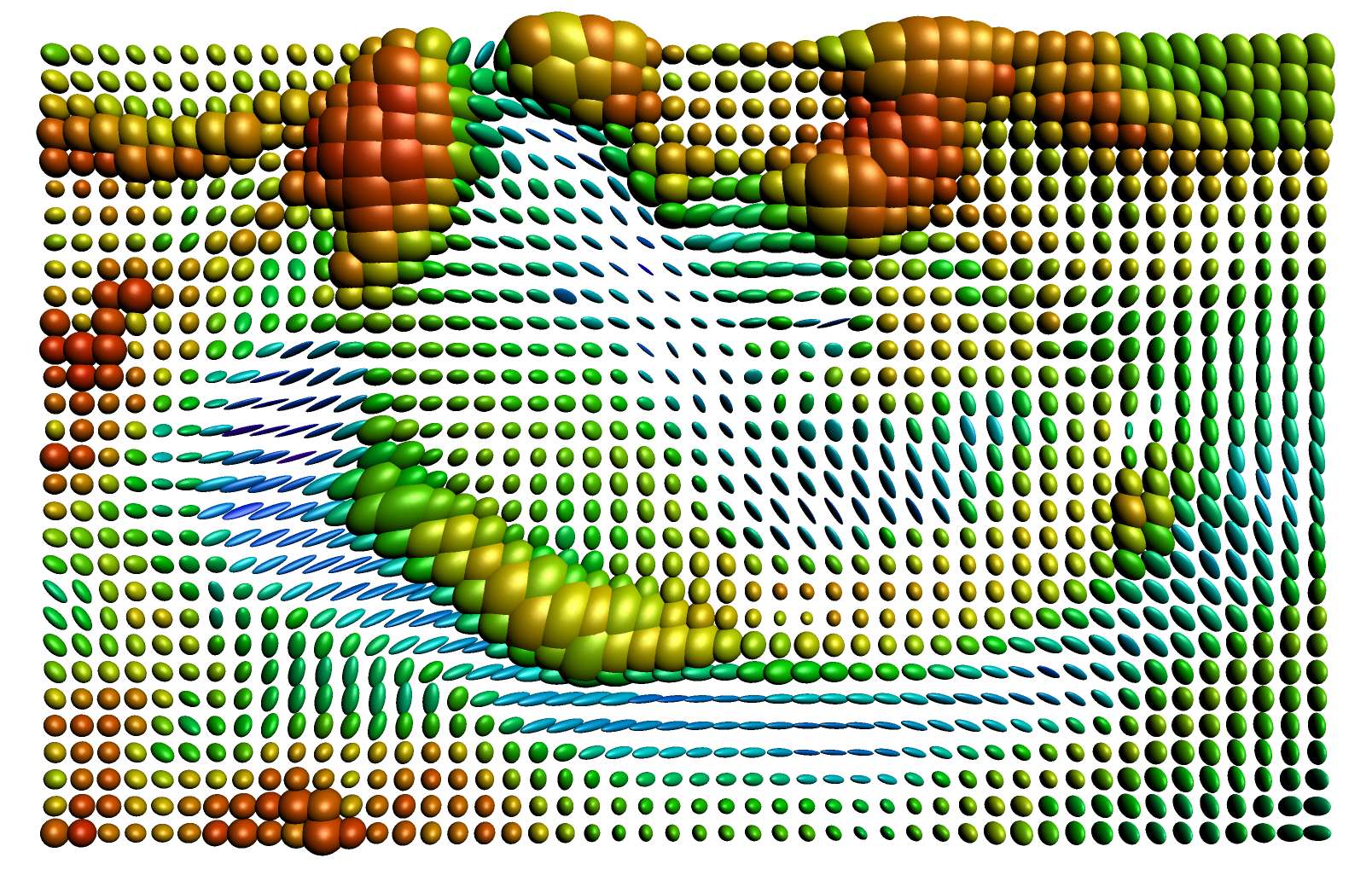}%
		
		\includegraphics[width = 0.50\textwidth]{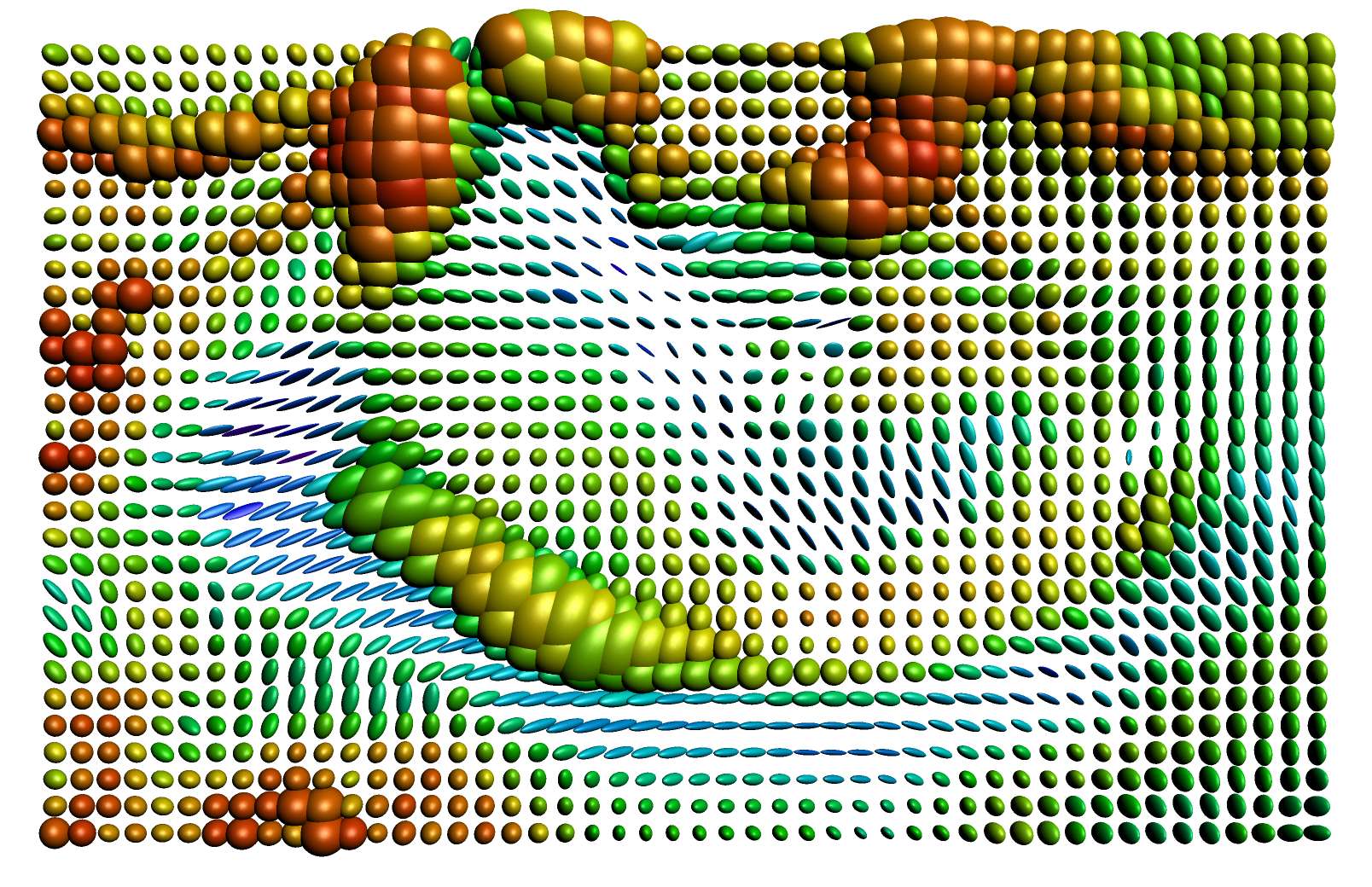}%
		\includegraphics[width = 0.50\textwidth]{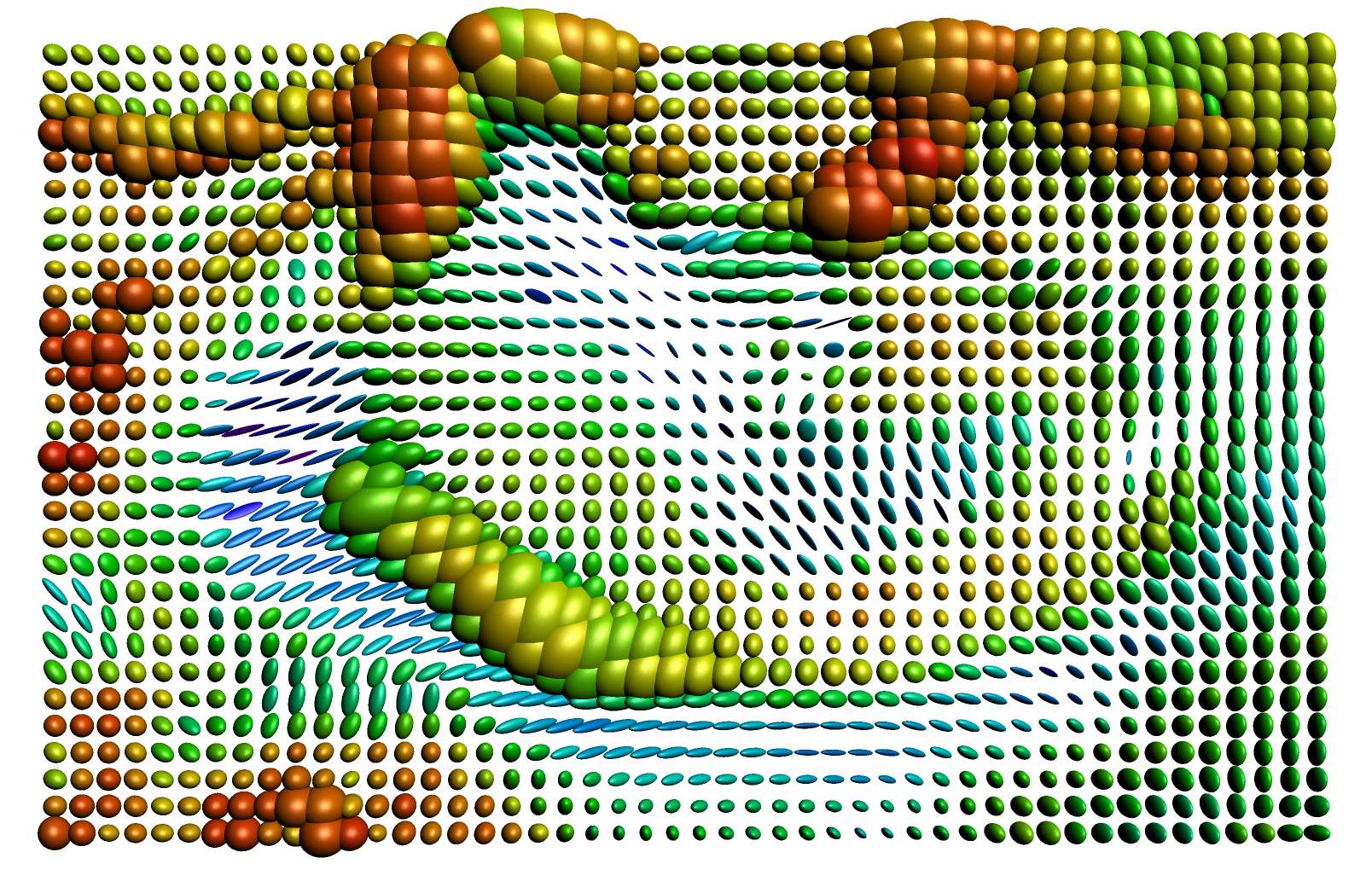}%
		
		\includegraphics[width = 0.50\textwidth]{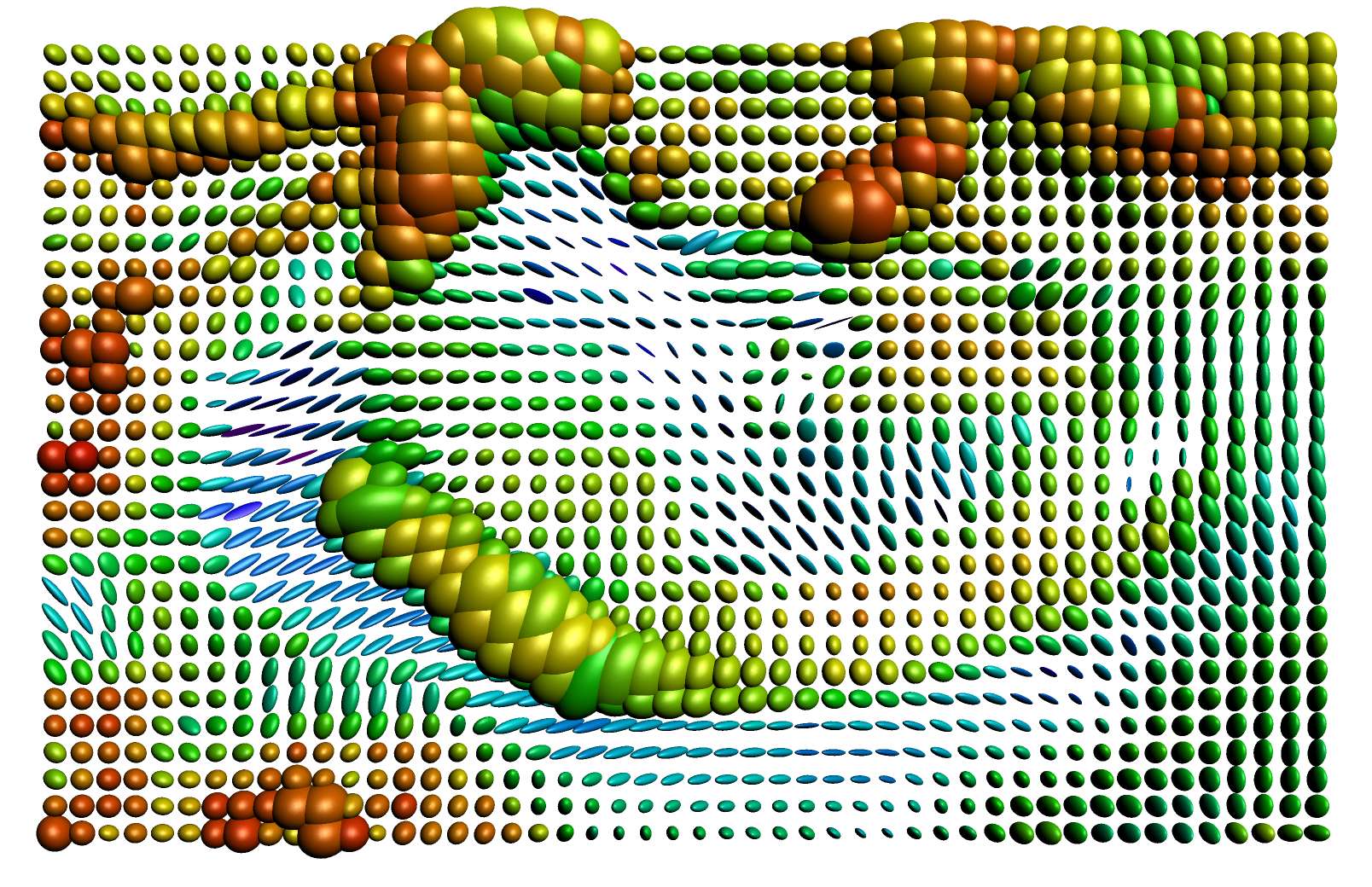}%
		\includegraphics[width = 0.50\textwidth]{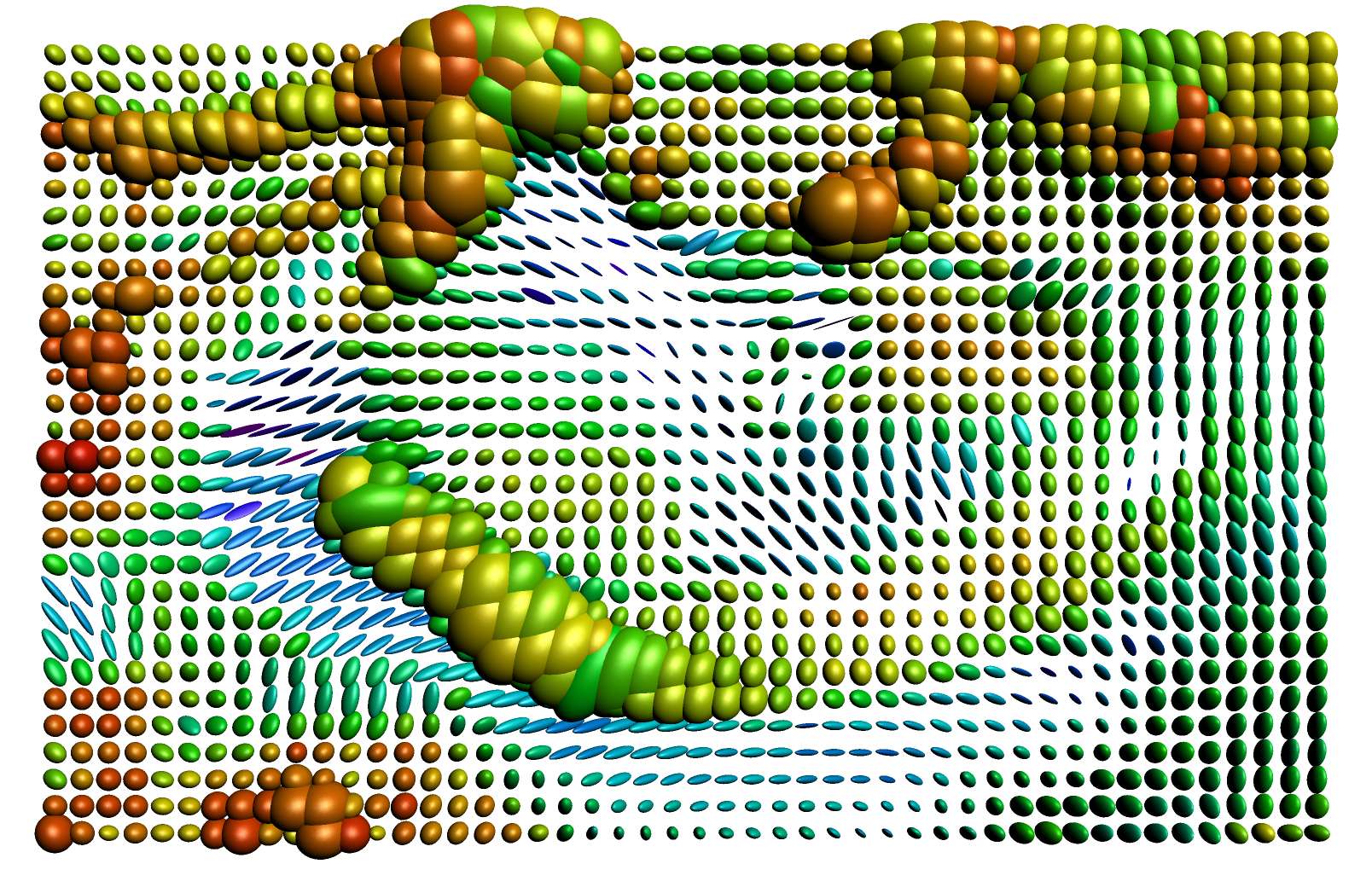}%
		\caption{Morphing path between a part of the YZ-slices 49 and 51 of the Camino dataset.}\label{fig:cam}
	\end{figure}
	The morphing path between two slices of the Camino\footnote{\url{http://camino.cs.ucl.ac.uk}}~\cite{camino} 
	is shown in Fig.~\ref{fig:cam}. As a preprocessing we inpainted the holes and slightly smoothed the slices 
	using a $\ell_2-\operatorname{TV}$ regularization with the Douglas-Rachford algorithm~\cite{BPS16}.
	The image path is calculated using $\operatorname{lev}=5$, 
	where the image size is decreased to $80\%$ per level, $\alpha =0.025$,
	and $\tilde K_4 = 2,\tilde K_3=\tilde K_2=1, \tilde K_1 = 0$, i.e.,
	we decrease the number of new intermediate images while going to finer levels. 
	Several interesting effects can be seen on the path, e.g., 
	the spot of large tensors on the right blends into the background, as there is no similar structure in the final image. 
	Further the stripe in the center moves a bit to the bottom left. While big structures on the top merge together, a small structure at the left boundary separates. 
	As our implementation of the spatial discrete setting involves many means, the images on the morphing path rather very smooth. 
	Hence a comparison with the original Camino slice  between the template 
	and the target image is not reasonable.
	
	\section{Conclusions} \label{sec:conclusions}
In this paper we have shown how the time discrete geodesic path model of~\cite{BER15}
can be generalized to the manifold-valued images.
Indeed, we have used a modified setting and have shown that at least
for finite dimensional Hadamard manifolds there exist minimizers for the space continuous model.

We have outlined the computational details for the smooth transition of discrete images with values on a manifold,
where the computations work for more general manifolds than Hadamard ones.
We have to clarify if the solution of the system of equations
from the space continuous setting, which is also used in the space discrete model
still leads to a decrease of the corresponding functional.
In all our numerical examples this was the case.
Furthermore, it is questionable if some theory can be generalized to other manifolds.
For examples of manifold-valued images, see e.g.~\cite{BCHPS16}.
A generalization of Theorem~\ref{lem:uni:seq} to manifolds with arbitrary curvature 
seems to be possible if the images live in compact and convex subsets of the manifold. 
Compactness ensures the existence of a minimizer in the $L^2(\Omega,\HH)$. 
Convexity is used for the uniqueness of shortest geodesics.

The most interesting question is if the model converges for $K \rightarrow \infty$
to some meaningful functional such that it can be interpreted as time discretization
of some geodesic path in the space of manifold-valued images. Note that we have dealt 
with the Mosco convergence in Hadamard spaces in~\cite{BMS2016}.


\appendix
\section{Gagliardo-Nirenberg inequality}\label{app:gn}
	\begin{theorem}[Gagliardo-Nirenberg \cite{Nir1966}]\label{th:gn}
		Let $\Omega\subset\RR^n$ be a bounded domain satisfying the cone property. 
		For $1\le q, r\le\infty$, 
		suppose that $f$ belongs to $L^q(\Omega)$ and its derivatives of order $m$ to $L^r(\Omega)$. 
		Then for the derivatives $D^j f$, $0\le j < m$ the following inequalities hold true
		with constants $C_1,C_2$ independent of $f$:
		\begin{equation*}
			\lVert D^j f \rVert_{L^p(\Omega)}\le C_1\lVert D^m f \rVert_{L^r(\Omega)}^a\lVert f\rVert_{L^q(\Omega)}^{1-a}+C_2\lVert f \rVert_{L^q(\Omega)},
		\end{equation*}
		where
		$
			\frac{1}{p} = \frac{j}{n}+a\Bigl(\frac{1}{r}-\frac{m}{n}\Bigr)+(1-a)\frac{1}{q}
		$
		for all $a \in [\frac{j}{m}, 1]$, except for the case $1<r<\infty$ and $m-j-\frac{n}{r}$ is a nonnegative integer, in which the inequality 
		only holds true for  $a \in [\frac{j}{m},1)$.
	\end{theorem}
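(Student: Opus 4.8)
The plan is to deduce the statement on a domain $\Omega$ with the cone property from the corresponding homogeneous inequality on all of $\RR^n$, in which the additive term $C_2\lVert f\rVert_{L^q}$ is absent, and then to recover that term through an extension argument. As a preliminary sanity check I would note that the exponent relation for $\tfrac1p$ is not an extra hypothesis but is forced by dimensional analysis: writing $f_\lambda(x)=f(\lambda x)$ one has $\lVert D^j f_\lambda\rVert_{L^p(\RR^n)}=\lambda^{\,j-n/p}\lVert D^j f\rVert_{L^p}$, and demanding that the scale-free estimate $\lVert D^j f\rVert_{L^p}\le C\lVert D^m f\rVert_{L^r}^a\lVert f\rVert_{L^q}^{1-a}$ be invariant under $f\mapsto f_\lambda$ gives exactly $j-\tfrac np=a(m-\tfrac nr)-(1-a)\tfrac nq$, i.e.\ $\tfrac1p=\tfrac jn+a(\tfrac1r-\tfrac mn)+(1-a)\tfrac1q$. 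The endpoints $a=1$ (pure Sobolev gain of $m-j$ derivatives) and $a=\tfrac jm$ delimit the admissible family, and intermediate $a$ will be produced by interpolation.

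For the homogeneous whole-space inequality I would first establish a one-dimensional core estimate and then lift it to $\RR^n$. In one variable, for the fundamental case $j=1,\ m=2$, the identity $g'(x)=g'(\xi)+\int_\xi^x g''$ together with a mean-value choice of $\xi$ on an interval of length $\ell$ yields $\sup_I\lvert g'\rvert\le C\bigl(\ell^{-1}\sup_I\lvert g\rvert+\ell\,\sup_I\lvert g''\rvert\bigr)$; optimizing in $\ell$ gives the geometric-mean bound, and the $L^p$ version follows by applying the same device locally and summing. General $(j,m)$ is then reached by iterating this one-step reduction through the neighbouring derivative orders. To pass from $\RR$ to $\RR^n$ I would use Gagliardo's slicing argument: apply the one-dimensional inequality along each coordinate axis and recombine the resulting factors via the multilinear Hölder (Loomis–Whitney type) inequality, which is precisely the device that converts products of one-dimensional bounds into an $n$-dimensional $L^p$ bound. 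The full admissible range of $(p,q,r,a)$ is then obtained from the endpoint estimates by Hölder interpolation against $\lVert f\rVert_{L^q}$.

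To transfer the inequality to $\Omega$ I would invoke a bounded extension operator $E\colon W^{m,r}(\Omega)\to W^{m,r}(\RR^n)$, available under the cone property, satisfying $\lVert D^\beta(Ef)\rVert_{L^s(\RR^n)}\le C\lVert f\rVert_{W^{\lvert\beta\rvert,s}(\Omega)}$. Applying the homogeneous whole-space inequality to $Ef$, restricting back to $\Omega$, and absorbing the extension constants produces exactly the additive lower-order term $C_2\lVert f\rVert_{L^q(\Omega)}$, with $C_1,C_2$ independent of $f$.

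The main obstacle is the $n$-dimensional slicing step: arranging the multilinear Hölder bookkeeping so that the correct power $a$ on $\lVert D^m f\rVert_{L^r}$ is obtained \emph{uniformly} across the whole exponent range, rather than only at a single balanced exponent. Equally delicate is the exceptional configuration $1<r<\infty$ with $m-j-\tfrac nr$ a nonnegative integer, where the endpoint $a=1$ must be excluded: there the estimate degenerates into a borderline embedding into $L^\infty$/$\mathrm{BMO}$ that genuinely fails, leaving only $a\in[\tfrac jm,1)$. Since the result is classical and we only ever use the balanced case $p=q=r=2$ recorded in Remark~\ref{th:gnr}, in practice I would cite \cite{Nir1966} rather than reproduce this case analysis in full.
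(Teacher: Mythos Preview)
The paper does not prove this theorem at all: it is stated in the appendix as a classical result and attributed to \cite{Nir1966}, with no argument given. Your sketch is a reasonable outline of the standard route (one-dimensional mean-value/optimization estimate, iteration in the derivative order, slicing plus multilinear H\"older to reach $\RR^n$, then an extension operator to pass to a cone domain and pick up the additive $C_2\lVert f\rVert_{L^q}$ term), and you yourself conclude that in practice one should simply cite \cite{Nir1966}; that is exactly what the paper does. So there is no discrepancy to flag: your final recommendation coincides with the paper's treatment, and the preceding outline is supplementary background rather than a competing proof.
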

	\begin{remark}\label{th:gnr}
		For $p=q=r=2$ the inequality simplifies to
		\begin{align*}
			\lVert D^j f \rVert_{L^2(\Omega)} &\le C_1\lVert D^m f \rVert_{L^2(\Omega)}^{\frac{j}{m}}\lVert f\rVert_{L^2(\Omega)}^{1-\frac{j}{m}}+C_2\lVert f \rVert_{L^2(\Omega)}
			\\
			&\leq C_1\lVert D^m f \rVert_{L^2(\Omega)} + \big(C_1 + C_2 \big) \lVert f \rVert_{L^2(\Omega)},
		\end{align*}
		where the second inequality follows by estimating the product with the maximum of both factors.
	\end{remark}

\subsection*{Acknowledgments} 
We are grateful to the anonymous referee for the valuable comments.
We want to thank M. Rumpf for fruitful discussions.
	Many thanks to Miroslav Ba{\v c}{\'a}k for 
	bringing the book of Bridson and Haefliger to our attention
	which points out that closed and bounded sets in locally convex Hadamard spaces are compact.
	Funding by the German Research Foundation (DFG) within the project STE 571/13-1 
	and with\-in the Research Training Group 1932,
	project area P3, is gratefully acknowledged.

	\bibliographystyle{abbrv}
	\bibliography{references}	

\begin{thebibliography}{10}

\bibitem{AGSW16}
P.-A. Absil, P.-Y. Gousenbourger, P.~Striewski, and B.~Wirth.
\newblock Differentiable piecewise-{B}{\'e}zier surfaces on {R}iemannian
  manifolds.
\newblock {\em SIAM Journal on Imaging Sciences}, 9(4):1788--1828, 2016.

\bibitem{alt2002}
H.~W. Alt.
\newblock {\em Lineare Funktionalanalysis: Eine anwendungsorientierte
  Einf\"uhrung}, volume~6.
\newblock Springer, Berlin, 2002.

\bibitem{Bac14}
M.~Ba{\v{c}}{\'a}k.
\newblock {\em Convex {A}nalysis and {O}ptimization in {H}adamard {S}paces},
  volume~22 of {\em De Gruyter Series in Nonlinear Analysis and Applications}.
\newblock De Gruyter, Berlin, 2014.

\bibitem{BMS2016}
M.~Ba{\v{c}}{\'a}k, M.~Montag, and G.~Steidl.
\newblock Convergence of functions and their {M}oreau envelopes on {H}adamard
  spaces.
\newblock {\em Journal of Approximation Theory}, Accepted.

\bibitem{Ball1981}
J.~M. Ball.
\newblock Global invertibility of {S}obolev functions and the interpenetration
  of matter.
\newblock {\em Proceedings of the Royal Society of Edinburgh: Section A
  Mathematics}, 88(3-4):315--328, 1981.

\bibitem{BMTY2005}
M.~F. Beg, M.~I. Miller, A.~Trouv{\'e}, and L.~Younes.
\newblock Computing large deformation metric mappings via geodesic flows of
  diffeomorphisms.
\newblock {\em International Journal of Computer Vision}, 61(2):139--157, 2005.

\bibitem{BCHPS16}
R.~Bergmann, R.~H. Chan, R.~Hielscher, J.~Persch, and G.~Steidl.
\newblock Restoration of manifold-valued images by half-quadratic minimization.
\newblock {\em Inverse Problems and Imaging}, 10(2):281–304, 2016.

\bibitem{BPS16}
R.~Bergmann and G.~S. J.~Persch.
\newblock A parallel {D}ouglas–{R}achford algorithm for restoring images with
  values in symmetric {H}adamard manifolds.
\newblock {\em SIAM Journal on Imaging Sciences}, 9(3):901--937, 2016.

\bibitem{BER15}
B.~Berkels, A.~Effland, and M.~Rumpf.
\newblock Time discrete geodesic paths in the space of images.
\newblock {\em SIAM Journal on Imaging Sciences}, 8(3):1457--1488, 2015.

\bibitem{BH1999}
M.~R. Bridson and A.~H\"afliger.
\newblock {\em Metric Spaces of Non-Positive Curvature}.
\newblock Grundlehren der Mathematischen Wissenschaften 319, Springer-Verlag,
  Berlin, 2004.

\bibitem{CJ2001}
G.~E. Christensen and H.~J. Johnson.
\newblock Consistent image registration.
\newblock {\em IEEE Transactions on Medical Imaging}, 20(7):568--582, 2001.

\bibitem{CRM96}
G.~E. Christensen, R.~D. Rabbitt, and M.~I. Miller.
\newblock Deformable templates using large deformation kinematics.
\newblock {\em IEEE Transactions on Image Processing}, 5(10):1435--1447, 1996.

\bibitem{camino}
P.~A. Cook, Y.~Bai, S.~Nedjati-Gilani, K.~K. Seunarine, M.~G. Hall, G.~J.
  Parker, and D.~C. Alexander.
\newblock {C}amino: Open-source diffusion-{MRI} reconstruction and processing.
\newblock In {\em 14th Scientific Meeting of the International Society for
  Magnetic Resonance in Medicine}, page 2759, Seattle, WA, USA, 2006.

\bibitem{DGM98}
P.~Dupuis, U.~Grenander, and M.~I. Miller.
\newblock Variational problems on flows of diffeomorphisms for image matching.
\newblock {\em Quarterly of Applied Mathematics}, 56(3):587--600, 1998.

\bibitem{Effland17}
A.~Effland.
\newblock {\em Discrete Riemannian Calculus and A Posteriori Error Control on
  Shape Spaces}.
\newblock Disertation, University of Bonn, 2017.

\bibitem{FM2003}
B.~Fischer and J.~Modersitzki.
\newblock Curvature based image registration.
\newblock {\em Journal of Mathematical Imaging and Vision}, 18(1):81--85, 2003.

\bibitem{FJSY09}
M.~Fuchs, B.~J\"uttler, O.~Scherzer, and H.~Yang.
\newblock Shape metrics based on elastic deformations.
\newblock {\em Journal Mathematical Imaging and Vision}, 35(1):68--102, 2009.

\bibitem{HM06}
E.~Haber and J.~Modersitzki.
\newblock A multilevel method for image registration.
\newblock {\em SIAM Journal on Scientific Computing}, 27(5):1594--1607, 2006.

\bibitem{HBDHRSSU2007}
J.~Han, B.~Berkels, M.~Droske, J.~Hornegger, M.~Rumpf, C.~Schaller, J.~Scorzin,
  and H.~Urbach.
\newblock Mumford--{S}hah model for one-to-one edge matching.
\newblock {\em IEEE Transactions on Image Processing}, 16(11):2720--2732, 2007.

\bibitem{HJS12}
Y.~Hong, S.~Joshi, M.~Sanchez, M.~Styner, and M.~Niethammer.
\newblock Metamorphic geodesic regression.
\newblock In {\em In Proc. of International Conference on Medical Image
  Computing and Computer-Assisted Intervention}, volume 7512 of Lecture Notes
  in Computer Science, pages 197--205, 2012.

\bibitem{Jost97}
J.~Jost.
\newblock {\em Nonpositive Curvature: Geometric and Analytic Aspects}.
\newblock Lectures in Mathematics ETH Z\"urich. Birkh\"auser Verlag, Basel,
  1997.

\bibitem{Lee12}
J.~M. Lee.
\newblock {\em Introduction to Smooth Manifolds}, volume 218 of {\em Graduate
  Texts in Mathematics}.
\newblock Springer, New York, 2012.

\bibitem{MTY02}
M.~I. Miller, A.~Trouv{\'e}, and L.~Younes.
\newblock On the metrics and euler-lagrange equations of computational anatomy.
\newblock {\em Annual Review of Biomedical Engineering}, 4(1):375--405, 2002.

\bibitem{MTY15}
M.~I. Miller, A.~Trouv{\'e}, and L.~Younes.
\newblock Hamiltonian systems and optimal control in computational anatomy: 100
  years since d'arcy thompson.
\newblock {\em Annual Review of Biomedical Engineering}, 17:447--509, 2015.

\bibitem{MY2001}
M.~I. Miller and L.~Younes.
\newblock Group actions, homeomorphisms, and matching: A general framework.
\newblock {\em International Journal of Computer Vision}, 41(1-2):61--84, 2001.

\bibitem{Mod2004}
J.~Modersitzki.
\newblock {\em Numerical Methods for Image Registration}.
\newblock Oxford University Press on Demand, 2004.

\bibitem{Mod2009}
J.~Modersitzki.
\newblock {\em {FAIR}: Flexible Algorithms for Image Registration}.
\newblock SIAM, Philadelphia, 2009.

\bibitem{Nir1966}
L.~Nirenberg.
\newblock An extended interpolation inequality.
\newblock {\em Annali Della Scuola Normale Superiore di Pisa-Classe di
  Scienze}, 20(4):733--737, 1966.

\bibitem{PFA06}
X.~Pennec, P.~Fillard, and N.~Ayache.
\newblock A {R}iemannian framework for tensor computing.
\newblock {\em International Journal of Computer Vision}, 66(1):41--66, 2006.

\bibitem{PPS17}
J.~Persch, F.~Pierre, and G.~Steidl.
\newblock Exemplar-based face colorization using image morphing.
\newblock {\em Journal of Imaging}, 3(4):ArtNum 48, 2017.

\bibitem{RY16}
C.~L. Richardson and L.~Younes.
\newblock Metamorphosis of images in reproducing kernel {H}ilbert spaces.
\newblock {\em Advances in Computational Mathematics}, 42(3):573--603, 2016.

\bibitem{Rudin1964}
W.~Rudin.
\newblock {\em {A}nalysis}.
\newblock McGraw-Hill, 1964.

\bibitem{RW13}
M.~Rumpf and B.~Wirth.
\newblock Discrete geodesic calculus in shape space and applications in the
  space of viscous fluidic objects.
\newblock {\em SIAM Journal on Imaging Sciences}, 6(4):2581--2602, 2013.

\bibitem{RW15}
M.~Rumpf and B.~Wirth.
\newblock Variational time discretization of geodesic calculus.
\newblock {\em IMA Journal of Numerical Analysis}, 35(3):1011--1046, 2015.

\bibitem{Smythe1990}
D.~B. Smythe.
\newblock A two-pass mesh warping algorithm for object transformation and image
  interpolation.
\newblock Technical report, ILM Technical Memo 
  Department, Lucasfilm Ltd, 1990.

\bibitem{Tro95}
A.~Trouv{\'e}.
\newblock An infinite dimensional group approach for physics based models in
  pattern recognition.
\newblock {\em International Journal of Computer Vision}, 1995.

\bibitem{Tro98}
A.~Trouv{\'e}.
\newblock Diffeomorphisms groups and pattern matching in image analysis.
\newblock {\em International Journal of Computer Vision}, 28(3):213--221, 1998.

\bibitem{TY2005a}
A.~Trouv\'e and L.~Younes.
\newblock Local geometry of deformable templates.
\newblock {\em SIAM Journal of Mathematical Analysis}, 37(2):17--59, 2005.

\bibitem{TY2005b}
A.~Trouv\'e and L.~Younes.
\newblock Metamorphoses through {L}ie group action.
\newblock {\em Foundations in Computational Mathematics}, 5(2):173--198, 2005.

\bibitem{wolberg1990}
G.~Wolberg.
\newblock {\em Digital Image Warping}, volume 10662.
\newblock IEEE Computer Society Press, Los Alamitos, CA, 1990.

\bibitem{Wolberg1998}
G.~Wolberg.
\newblock Image morphing: a survey.
\newblock {\em The Visual Computer}, 14(8):360--372, 1998.

\bibitem{Younes2010}
L.~Younes.
\newblock {\em Shapes and Diffeomorphisms}.
\newblock Springer-Verlag, Berlin, 2010.

\bibitem{YSS07}
J.~Yuan, C.~Schn\"orr, and G.~Steidl.
\newblock Simultaneous higher order optical flow estimation and decomposition.
\newblock {\em SIAM Journal of Scientific Computing}, 29(6):2283--2304, 2007.

\end{thebibliography}
\end{document}